\newtheorem{theorem}{Theorem}[section]
\newtheorem{prop}[theorem]{Proposition}
\newtheorem{lem}[theorem]{Lemma}
\newtheorem{coro}[theorem]{Corollary}
\theoremstyle{remark}
\newtheorem{rem}[theorem]{Remark} 
\theoremstyle{remark}
\newtheorem{df}[theorem]{\textsc{Definition}}
\theoremstyle{remark}
\theoremstyle{remark}
\newtheorem{exmp}[theorem]{Example}
\theoremstyle{plain}
\newtheorem*{theorem1}{Theorem 1} 
\newtheorem*{theorem2}{Theorem 2}
\renewcommand{\mod}{\operatorname{mod}}
\newcommand{\add}{\operatorname{add}}
\newcommand{\Hom}{\operatorname{Hom}}
\newcommand{\End}{\operatorname{End}}
\newcommand{\op}{\operatorname{op}}
\begin{document} 

\baselineskip=16pt
\parindent0pt

\title[Generalized WSA]{Generalized weighted surface algebras}

\author[A. Skowro\'nski]{Andrzej Skowro\'nski}
\address[Andrzej Skowro\'nski]{Faculty of Mathematics and Computer Science, 
Nicolaus Copernicus University, Chopina 12/18, 87-100 Torun, Poland}
\email{skowron@mat.umk.pl} 

\author[A. Skowyrski]{Adam Skowyrski}
\address[Adam Skowyrski]{Faculty of Mathematics and Computer Science, 
Nicolaus Copernicus University, Chopina 12/18, 87-100 Torun, Poland}
\email{skowyr@mat.umk.pl} 

\subjclass[2020]{Primary: 16D50, 16E30, 16G60, 16E35 }
\keywords{Symmetric algebra, Tame algebra, Periodic algebra, Weighted surface algebra, Derived equivalence, Mutation}

\medskip

\begin{abstract} The weighted triangulation (surface) algebras associated to triangulation quivers (triangulated surfaces) 
and their socle deformations were recently introduced and studied in \cite{ES2}-\cite{ES9} and \cite{BEHSY}. These algebras, 
based on surface triangulations and originated from the theory of cluster algebras, were also proved to be finite-dimensional 
tame symmetric and periodic, of period $4$ (with some minor exceptions). In this paper, we introduce a new concept of a 
generalized triangulation quiver, extending existing notion of a triangulation quiver. In particular, it is shown that the 
generalized triangulation quivers can be reconstructed from triangulations of orientable surfaces with marked self-folded 
triangles. Moreover, motivated by the results of \cite{HSS}, we define and investigate so called weighted generalized 
triangulation algebras associated to generalized triangulation quivers, which naturally arise as mutations of weighted 
triangulation algebras. This gives new important class of tame symmetric periodic algebras of period $4$, essentially 
extending the class of weighted triangulation (surface) algebras, which justifies the name we chose. \end{abstract} 

\maketitle 

\section{Introduction and the main results}\label{sec:0} 

Throughout the article, we assume that $K$ is an algebraically closed field. An algebra is always finite-dimensional 
associative $K$-algebra with identity, which is assumed to be basic and connected. Given an algebra $A$, we denote by 
$\mod A$ the category of finite-dimensional (right) $A$-modules. Recall that an algebra $A$ is \emph{self-injective}, 
if all projective modules in $\mod A$ are injective. Important class of self-injective algebras is provided by the 
\emph{symmetric} algebras, for which we have a nondegenerate symmetric $K$-bilinear form $(-,-):A\times A\to K$. 
There exist many well-known classical examples of symmetric algebras, such as blocks of (finite-dimensional) group 
algebras \cite{E} or Hecke algebras associated to finite Coxeter groups \cite{Ar}. Furthermore, any algebra $A$ is 
a quotient of a symmetric algebra $T(A)=A\ltimes D(A)$ called a trivial extension of $A$, where $D$ denotes the 
standard duality $\Hom_K(-,K)$ on $\mod A$. 

For a module $M$ in $\mod A$, we denote by $\Omega_A(M)$ its \emph{syzygy}, that is the kernel of a minimal projective 
cover of $M$ in $\mod A$. A module $M$ in $\mod A$ is called \emph{periodic} if $M\simeq \Omega_A^n(M)$, for some 
$n\geqslant 1$, and the minimal such a number is called the \emph{period} of $M$. A prominent class of self-injective 
algebras consists of \emph{periodic} algebras $A$, for which $A$ is a periodic module in $\mod A^e$, where $A^e=A\otimes_K A$ 
is the enveloping algebra of $A$ (this is equivalent to say that $A$ is periodic as an $A$-$A$-bimodule). Every periodic 
algebra $A$ has periodic module category \cite[see Theorem IV.11.19]{SY}, that is all (nonprojective) modules in $\mod A$ 
are periodic (with period dividing the period of $A$), and moreover, its Hochschild cohomology is also periodic. Periodic 
algebras appear in many places, revealing its connections with group theory, topology, singularity theory, cluster algebras 
and algebraic combinatorics (see the survey \cite{ES1}). 

A general aim we are concerned with is to classify all tame symmetric periodic algebras. In \cite{Du1} Dugas proved that 
all representation-finite self-injective algebras without simple blocks are periodic algebras. The classification of all 
representation-infinite periodic algebras of polynomial growth was established in \cite{BES}; see also \cite{S1}. It is 
conjectured \cite[Problem]{ES2} that all tame symmetric periodic algebras of non-polynomial growth have period equal four. 

Recently, motivated by cluster theory, Erdmann and Skowro\'nski defined large class of symmetric periodic algebras of 
period $4$ associated to triangulations of real compact surfaces, called \emph{weighted surface algebras} (see 
\cite{ES2,ES5,ES9}). These are algebras of the form $\Lambda=\Lambda(S,\overrightarrow{\mathcal{T}},m_\bullet,c_\bullet,b_\bullet)$ 
associated to a (directed) triangulated surface $(S,\overrightarrow{\mathcal{T}})$ and some additional data encoded 
in functions $m_\bullet$, $c_\bullet$, and $b_\bullet$. More specifically, starting from $(S,\overrightarrow{\mathcal{T}})$ 
they first construct so-called triangulation quiver $(Q,f)$ uniuely determined by $(S,\overrightarrow{\mathcal{T}})$, 
where $Q$ is a $2$-regular quiver and $f$ is a permutation of arrows in $Q$ (of order $3$), and then define $\Lambda$ 
as a quotient $\Lambda=KQ/I$ of the path algebra $KQ$ of $Q$ by an ideal $I$ depending on $(Q,f,m_\bullet,c_\bullet,b_\bullet)$. 
The structure of $\Lambda$ relies only on $(Q,f,m_\bullet,c_\bullet,b_\bullet)$, so we mostly prefer to use the notation 
$\Lambda(Q,f,m_\bullet,c_\bullet,b_\bullet)$ for $\Lambda$, instead of 
$\Lambda(S,\overrightarrow{\mathcal{T}},m_\bullet,c_\bullet,b_\bullet)$, and refer to as a \emph{weighted triangulation 
algebra} -- this term was already used in the first paper \cite{ES2}. Precise definitions can be found in Section \ref{sec:4}. 
In this paper, we mainly focus on the combinatorics of underlying (generalized) triangulation quivers, and the context 
of surfaces appears only in the last short section. 

By the results of \cite{ES2,ES5}, the weighted triangulation algebras are tame symmetric periodic algebras of period $4$, 
with some minor exceptions. More than that, we mention results of Erdmann and Skowro\'nski from \cite{ES4}, where it is proved 
that an algebra $A$ with $2$-regular Gabriel quiver (having at least three vertices) is a tame symmetric periodic algebra of 
period $4$, or in general -- so called algebra of generalized quaternion type -- if and only if $A$ is a weighted surface 
algebra $\Lambda(S,\overrightarrow{\mathcal{T}},m_\bullet,c_\bullet,b_\bullet)$ different from a singular tetrahedral algebra, 
or it is the higher tetrahedral algebra $\Lambda(m,\lambda)$, $m\geqslant 2$, $\lambda\in K^*$. For details we refer to 
\cite{ES4}; see also \cite{ES3}. Note also that there are many relevant examples of wild symmetric periodic algebras of 
period $4$ realized as stable endomorphism rings of cluster-tilting Cohen-Macaulay modules over one-dimensional hypersurface 
singularities \cite{BIKR}; see also \cite[Corollary 2]{ES4}. 

Triangulations of surfaces provide a very useful tool to study algebraic structures. For instance, were applied to study 
cluster algebraic structures in Teichm\"uller theory \cite{FG,GSV}, cluster algebras of topological origin \cite{FST2}, 
as well as cluster algebras of finite mutation type with skew symmetric exchange matrices \cite{FST1}. Mutations introduced 
by Fomin and Zelevinsky (see \cite{DWZ1,DWZ2}), closely related to flips of triangulations, were originally applied to 
quivers without loops and $2$-cycles (or corresponding skew-symmetric integer matrices) and the triangulation quivers 
associated to triangulated surfaces were of slightly different nature from the ones we are dealing with. Here both loops 
and $2$-cycles are allowed, however, we may adapt the notion of mutation to our case. Namely, given a weighted triangulation 
algebra $\Lambda=\Lambda(Q,f,m_\bullet,c_\bullet,b_\bullet)$ we define its \emph{mutation}, which is an algebra $\Lambda '$ 
of the form $\Lambda ':=\End_{K^b(P_\Lambda)}(T)$, where $T$ is a tilting complex in the homotopy category $K^b(P_\Lambda)$ 
of bounded complexes of projective $\Lambda$-modules determined by a certain left approximation of a projective module 
(see Section \ref{sec:1} for details). We only mention that this construction is a special case of a general approach 
presented in \cite{Du2}. The authors together with Holm have recently studied a large group of examples of mutations 
$\Lambda '$, for which $T$ was coming from a left approximation of an indecomposable projective $\Lambda$-module at a 
vertex of $Q$. We observed in many cases that $\Lambda '$ is of the form $\Lambda '=KQ'/I'$, where $Q'$ is obtained from 
$Q$ by an operation which resembles mutation in the classical sense (it reverses arrows at a fixed vertex and adds arrows 
corresponding to paths passing through the vertex). Inspired by this observation we introduced and investigated \cite{HSS} 
a wide class of tame symmetric periodic algebras of period $4$, called the \emph{virtual mutations of weighted surface 
(or triangulation) algebras}, which are derived equivalent but not isomorphic to weighted triangulation algebras. Actually, 
these algebras are obtained from weighted triangulation algebras by a sequence of mutations at special vertices (see also 
\cite{ES6,Sk}). Moreover, we discovered that the mutations of weighted triangulation algebras (in all considered examples) 
are algebras given by a quiver which is glueing of a finite number of the following five types of \emph{blocks} 
$$\xymatrix@R=0.01cm{&\\&\mbox{ } \\ &\ar@(lu,ld)[d]_{\mbox{ }}\\& \circ }\qquad\qquad
\xymatrix@R=0.5cm{&\circ\ar[rd] & \\ \circ\ar[ru] & &\circ\ar[ll]}\qquad\qquad
\xymatrix@R=0.01cm{&&\\&&\\&\ar@(lu,ld)[d]_{\mbox{ }}&\\& \bullet\ar@<0.2cm>[r]  &\circ \ar@<0.1cm>[l]}$$ 
$$\mbox{type I}\qquad\qquad\quad\quad\mbox{type II}\qquad\qquad\qquad\qquad\mbox{type III}$$
$$\xymatrix@R=0.5cm{&\bullet\ar[rd] & \\ \circ\ar[ru]\ar[rd] & &\circ\ar[ll] \\  & \bullet\ar[ru] & }\qquad\qquad 
\xymatrix@R=0.5cm{\bullet\ar[rd] & & \bullet\ar[ll]\ar[dd] \\ &\circ \ar[ld]\ar[ru] &  \\ 
\bullet\ar[rr]\ar[uu] & & \bullet\ar[lu] }$$ 
$$\mbox{type IV}\qquad\qquad\quad\qquad\mbox{type V}$$
where by gluing we mean that every white vertex ($\circ$) in any given block is glued with exactly one white vertex in a 
different block (precise definition is given in Section \ref{sec:2}). We will call such a quiver \emph{block decomposable}, 
although this notion was defined in a different way in \cite{FST2}, where block decomposable quivers were in one-to-one 
correspondence with the adjacency matrices of arcs of ideal (tagged) triangulations of bordered two-dimensional surfaces 
with marked points (see \cite[Section 13]{FST2}). We only note that block decomposable quivers in the sense of \cite{FST2} 
appeared also in \cite{FST1} and \cite{GLFS} in the context of cluster algebras and Jacobian algebras of quivers with potentials. 
 
If $Q$ is a block decomposable quiver and $*$ denotes a marking of one of (two) triangles in each block of type IV and V, 
then the pair $(Q,*)$ is called the \emph{generalized triangulation quiver}. For a generalized triangulation quiver $(Q,*)$ 
there is the associated quiver $Q^*$ with natural involution and a permutation of arrows $f$, which gives a triangulation-like 
structure on $Q^*$. As in case of the weighted triangulation algebras, we consider functions $m_\bullet$, $c_\bullet$, and 
$b_\bullet$ (satisfying some technical conditions), which are defined in terms of $(Q^*,f)$. With this setting we define 
the \emph{weighted generalized triangulation algebra} $\Lambda(Q,*,m_\bullet,c_\bullet,b_\bullet)=KQ/I$, where 
$I=I(Q,*,m_\bullet,c_\bullet,b_\bullet)$ is an ideal of the path algebra $KQ$ of $Q$. We note that, if $Q$ is a glueing 
of blocks of types I-III (empty marking), then $Q$ is a triangulation quiver with $Q=Q^*$ and the weighted generalized 
triangulation algebra $\Lambda(Q,*,m_\bullet,c_\bullet,b_\bullet)$ is isomorphic to the weighted triangulation algebra 
$\Lambda(Q,f,m_\bullet,c_\bullet,b_\bullet)$. Moreover, it was shown in \cite{HSS} that the virtual mutations of weighted 
triangulation algebras are the weighted generalized triangulation algebras of the form $\Lambda(Q,*,m_\bullet,c_\bullet,b_\bullet)$, 
where $Q$ is a gluing of blocks of types I-IV. In other words, class of weighted generalized triangulation algebras serves 
as a natural extension of both mentioned classes, and it is definitely the class of algebras worth studying -- not only 
because of wide variety of examples it provides. It was conjectured by Skowro\'nski (oral communication) that the class 
of weighted generalized triangulation algebras is closed under mutations and it exhausts all algebras derived equivalent 
to weighted triangulation algebras, and perhaps, even all tame symmetric periodic algebras of period four. 

We assume that all weighted generalized triangulation algebras considered in the paper are different from a singular 
disc, triangle, tetrahedral or spherical algebras (these exceptional algebras are not symmetric or not periodic, see 
\cite{ES2, ES5}). The following theorem is the main result of this article. 

\begin{theorem1}\label{thm1} Let $\Lambda=\Lambda(Q,*,m_\bullet,c_\bullet,b_\bullet)$ be a weighted generalized 
triangulation algebra. 
Then the following statements hold. 
\begin{enumerate}[(i)]
\item $\Lambda$ is a finite-dimensional symmetric algebra. 
\item $\Lambda$ is a tame algebra of infinite representation type.
\item $\Lambda$ is a periodic algebra of period $4$.
\end{enumerate} 
\end{theorem1} 

To any weighted generalized triangulation algebra $\Lambda=\Lambda(Q,*,m_\bullet,c_\bullet,b_\bullet)$ we will associate 
a weighted triangulation algebra $\Lambda^\Delta=\Lambda(Q^\Delta,f^\Delta,m_\bullet^\Delta,c_\bullet^\Delta,b_\bullet^\Delta)$ 
in a canonical way, so that the following theorem holds. 

\begin{theorem2}\label{thm2} Let $\Lambda$ be a weighted generalized triangulation algebra and $\Lambda^\Delta$ the associated 
weighted triangulation algebra. Then the algebras $\Lambda$ and $\Lambda^\Delta$ are derived equivalent. \end{theorem2} 

The paper is organized in the following way. In Section \ref{sec:1} we give a quick overview of basic facts on derived 
equivalences. Section \ref{sec:2} is devoted to introduce the notion of a generalized triangulation quiver. Further, 
in Section \ref{sec:3} we discuss the weighted gereralized triangulation algebras and their properties (including precise 
formula for dimension). Section \ref{sec:4} is dedicated to recall some background on weighted triangulation algebras. 
Here we also explain the construction of the weighted triangulation algebra $\Lambda^\Delta$ associated to a weighted 
generalized triangulation algebra $\Lambda$ (in fact, we will see that $\Lambda$ can be obtained from $\Lambda^\Delta$ 
by a sequence of mutations). Section \ref{sec:proof} delivers the proofs of Theorems 1 and 2. In the final Section \ref{sec:5} 
we describe a construction (intrinsically motivated by \cite{FST2}) which allows to associate to any (oriented) triangulated 
surface with marked self-folded triangles a generalized triangulation quiver, so its algebra as well. 

For necessary background on the representation theory we refer the reader to \cite{ASS,SY}.  

\section{Derived equivalences of algebras}\label{sec:1}
In this section, we recall a few basic facts on derived equivalences of algebras needed in our paper. 
Moreover, we discuss a relevant construction of tilting complexes which is very useful in realizing 
derived equivalences between symmetric and periodic algebras. 

Given an algebra $A$, we denote by $K^b(\mod A)$ the homotopy category of bounded complexes of modules 
in $\mod A$ and by $K^b(P_A)$ its subcategory formed by bounded complexes of projective modules. The 
derived category $D^b(\mod A)$ of $A$ is the localization of $K^b(\mod A)$ with respect to quasi-isomorphisms, 
and admits structure of a triangulated category, where the suspension functor is given by left shift 
$(-)[1]$ (see \cite{Ha1}). Algebras $A$ and $B$ are called \emph{derived equivalent} provided 
their derived categories $D^b(\mod A)$ and $D^b(\mod B)$ are equivalent as triangulated categories. Derived equivalences 
are commonly realized through tilting complexes, being natural extension of tilting modules. A complex $T$ in $K^b(P_A)$ 
is called a \emph{tilting complex} \cite{Ric1}, if the following conditions are satisfied: 
\begin{enumerate}
\item[(T1)] $\Hom_{K^b(P_A)}(T,T[i])=0$, for all integers $i\neq 0$, 
\item[(T2)] $\add(T)$ generates $K^b(P_A)$ as triangulated category. 
\end{enumerate} 

We have the following theorem due to Rickard \cite[Theorem 6.4]{Ric1}.

\begin{theorem}\label{thm:1.1} Algebras $A$ and $B$ are derived equivalent if and only if there exists 
a tilting complex $T$ in $K^b(P_A)$ such that $\End_{K^b(P_A)}\cong B$. 
\end{theorem}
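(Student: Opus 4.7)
The plan is to prove the two directions of this biconditional separately, with the forward direction being essentially formal and the reverse direction carrying all of the content. I would structure the argument around constructing an explicit triangle equivalence from the tilting datum.

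For the forward direction, suppose $F\colon D^b(\mod A)\to D^b(\mod B)$ is a triangle equivalence. The key observation is that the image of $K^b(P_A)$ in $D^b(\mod A)$ admits an intrinsic characterisation (as the full subcategory of compact, equivalently perfect, objects), so any triangle equivalence automatically restricts to an equivalence $K^b(P_A)\simeq K^b(P_B)$. Setting $T:=F^{-1}(B)$, where $B$ is placed in degree zero, condition (T1) is immediate from $\Hom_{K^b(P_B)}(B,B[i])=0$ for $i\neq 0$, and condition (T2) is immediate because $B$ itself generates $K^b(P_B)$ as a triangulated category (every indecomposable projective is a summand of $B$). Moreover $\End_{K^b(P_A)}(T)\cong \End_{K^b(P_B)}(B)\cong B$, finishing this direction.

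The reverse direction is where the work lies. Given a tilting complex $T$ with $\End_{K^b(P_A)}(T)\cong B$, my plan is to pass through a differential graded enhancement: view $T$ as a DG $A$-module (replacing by an h-projective resolution if needed) and form the DG endomorphism algebra $\mathcal{E}:=\mathrm{REnd}_A(T)$. Condition (T1) says precisely that $H^i(\mathcal{E})=0$ for $i\neq 0$, while $H^0(\mathcal{E})\cong B$, so the canonical truncation $\mathcal{E}\leftarrow \tau_{\leq 0}\mathcal{E}\to B$ is a zig-zag of quasi-isomorphisms of DG algebras. By Keller's derived Morita theorem, the functor $\mathrm{RHom}_A(T,-)$ then induces a triangle equivalence from the thick subcategory of $D^b(\mod A)$ generated by $T$ onto $K^b(P_B)$. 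Condition (T2) identifies the source with $K^b(P_A)$, and a standard extension argument upgrades this equivalence of perfect complexes to an equivalence $D^b(\mod A)\simeq D^b(\mod B)$ on the bounded derived categories of finitely generated modules.

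I expect the main obstacle to be precisely the DG-algebra step: rigorously identifying $\mathrm{REnd}_A(T)$ with $B$ requires a careful model-theoretic framework (h-projective resolutions, or a model structure on DG modules), and Keller's theorem itself is nontrivial. An alternative route, following Rickard's original argument, avoids DG methods by constructing inductively a two-sided tilting complex, namely a bounded complex $X$ of $B$-$A$-bimodules whose restriction along $A$ is quasi-isomorphic to $T$ and for which $-\otimes_B^L X$ is the desired equivalence. The technical heart there is the bimodule lifting, which is governed by obstruction classes in Hochschild cohomology; these vanish thanks to (T1), but checking the induction carefully is delicate, and either route requires a nontrivial amount of homological machinery beyond the statement itself.
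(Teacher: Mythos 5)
The paper does not actually prove this statement: Theorem \ref{thm:1.1} is quoted verbatim as Rickard's Morita theorem for derived categories, with a citation to \cite[Theorem 6.4]{Ric1}, so there is no internal proof for your argument to be measured against. Your outline is a faithful sketch of how the result is proved in the literature, and in that sense it is a genuinely different (and more ambitious) route than the paper's, which simply defers to Rickard. Your forward direction is essentially correct, with one caveat: inside $D^b(\mod A)$ one should not literally speak of \emph{compact} objects (that characterisation lives in the unbounded derived category of all modules, where coproducts exist); the intrinsic characterisation of perfect complexes you want here is the homological one, e.g.\ $X$ is perfect if and only if $\Hom(X,Y[i])=0$ for $|i|\gg 0$ for every $Y\in D^b(\mod A)$, after which the restriction of the equivalence to $K^b(P_A)\simeq K^b(P_B)$ and the verification of (T1), (T2) and $\End(T)\cong B$ go through exactly as you say. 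For the converse, your plan correctly identifies the two standard strategies: Keller's DG/derived Morita theorem applied to $\mathrm{REnd}_A(T)$, whose cohomology is concentrated in degree zero by (T1) and equals $B$, followed by restriction of the resulting equivalence of (unbounded) derived categories to the bounded derived categories of finite-dimensional modules; or Rickard's construction (his original argument builds the equivalence directly from $\add(T)$, and the two-sided tilting complex refinement is later work). But be aware that everything you delegate there -- Keller's theorem, the passage from an equivalence of perfect complexes to $D^b(\mod A)\simeq D^b(\mod B)$, or the inductive bimodule lifting -- is precisely the substance of the theorem; as a self-contained proof your text is an outline rather than a proof, which you candidly acknowledge, and which is reasonable given that the paper itself treats the statement as a cited classical result.
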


We recall also the following theorems (see \cite[Corollary 5.3]{Ric3} and \cite[Theorem 2.9]{ES1}) showing some invariants of derived equivalence. 

\begin{theorem}\label{thm:1.2} Let $A$ and $B$ be derived equivalent algebras. Then $A$ is symmetric if 
and only if $B$ is symmetric.
\end{theorem}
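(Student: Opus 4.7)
The plan is to prove this via the characterization of symmetric algebras through the Nakayama functor and the fact that this functor (as a Serre functor) is transported by derived equivalences. Recall that $A$ is symmetric if and only if $A \cong DA$ as $A$-$A$-bimodules, which is in turn equivalent to the Nakayama functor $\nu_A = -\otimes_A^{\mathbb{L}} DA$ being isomorphic to the identity on $K^b(P_A)$. So the task reduces to showing that derived equivalences intertwine the Nakayama functors, and that the identity functor on one side forces the identity functor on the other.

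First I would invoke Rickard's Morita theory for derived categories to upgrade the given triangle equivalence $D^b(\mod A) \simeq D^b(\mod B)$ to one induced by a two-sided tilting complex $X$ of $B$-$A$-bimodules, so that $F = -\otimes_A^{\mathbb{L}} X : D^b(\mod A) \to D^b(\mod B)$ is an equivalence with quasi-inverse $G = -\otimes_B^{\mathbb{L}} Y$ for a complex $Y$ of $A$-$B$-bimodules. This equivalence restricts to a triangle equivalence between the perfect subcategories $K^b(P_A)$ and $K^b(P_B)$, since perfectness is intrinsic (compact objects).

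Next I would show that $F$ intertwines the Nakayama functors, i.e.\ there is a natural isomorphism $F \circ \nu_A \cong \nu_B \circ F$ on perfect complexes. The conceptual route is to observe that for a self-injective algebra, $\nu$ is a Serre functor on $K^b(P)$, so uniqueness of Serre functors (together with the fact that any triangle equivalence transports a Serre functor to a Serre functor) yields the desired intertwining. Alternatively, one can write $\nu_A(-) = -\otimes_A^{\mathbb{L}} DA$ and use the isomorphism $X \otimes_A^{\mathbb{L}} DA \cong DB \otimes_B^{\mathbb{L}} X$ in $D^b(\mod B^e)$, which follows from $\mathbb{R}\Hom$-tensor adjunctions applied to the two-sided tilting complex. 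As a by-product of this step, self-injectivity of $A$ is inherited by $B$, since $\nu_B$ is an autoequivalence of $K^b(P_B)$ whenever $\nu_A$ is one of $K^b(P_A)$.

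Having the natural isomorphism $F \nu_A \cong \nu_B F$, the conclusion is immediate: if $A$ is symmetric then $\nu_A \cong \mathrm{id}_{K^b(P_A)}$, hence $\nu_B F \cong F$, and composing with the quasi-inverse $G$ gives $\nu_B \cong \mathrm{id}_{K^b(P_B)}$ on the image of $F$, and then on all of $K^b(P_B)$ by essential surjectivity; evaluating at $B$ yields $DB \cong B$ as $B$-$B$-bimodules, so $B$ is symmetric. The main obstacle is the intertwining step: verifying that the two-sided tilting complex $X$ transports $DA$ to $DB$ compatibly requires some care with derived tensor products and the duality $D$, and the cleanest way to handle this is the Serre-functor viewpoint, which also explains why the statement is genuinely symmetric in $A$ and $B$.
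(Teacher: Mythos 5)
The paper does not prove this statement at all: Theorem \ref{thm:1.2} is quoted as a known invariance result, with a pointer to \cite[Corollary 5.3]{Ric3} (and \cite{ES1}), so there is no internal proof to compare against. Your sketch is, in essence, a reconstruction of Rickard's own argument from that reference: upgrade the equivalence to a standard one given by a two-sided tilting complex $X$, prove the intertwining of Nakayama functors via the bimodule isomorphism $X\otimes_A^{\mathbb{L}}DA\cong DB\otimes_B^{\mathbb{L}}X$ (both sides are identified with $D$ of the inverse tilting complex, using one-sided perfectness of $X$ and tensor--$\Hom$ adjunction), and read off $DB\cong B$ from $DA\cong A$. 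So the route is correct and is the expected one; the real work, as you note, sits in the intertwining step, which is exactly what \cite{Ric3} supplies.

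Three small wrinkles worth tightening. First, for $F=-\otimes_A^{\mathbb{L}}X$ to act on right $A$-modules, $X$ must be an $A$-$B$-bimodule complex (left $A$, right $B$), not $B$-$A$ as written; this is only a bookkeeping slip. Second, when you pass from $\nu_B\cong\mathrm{id}$ on $K^b(P_B)$ to ``$DB\cong B$ as bimodules'', evaluation at $B$ only gives a right-module isomorphism; you must invoke naturality with respect to $\End_{D^b(\operatorname{mod}B)}(B)=B$ (acting by left multiplications) to upgrade it to a bimodule isomorphism -- standard, but it should be said. Third, the Serre-functor variant should be phrased so that it does not presuppose that $\nu_B$ is a Serre functor on $K^b(P_B)$ (which would already require $B$ Gorenstein/self-injective): the clean formulation is that if $A$ is symmetric then the identity is a Serre functor on $K^b(P_A)$, Serre functors are transported by any triangle equivalence of the perfect subcategories, hence the identity is a Serre functor on $K^b(P_B)$, and the resulting natural isomorphism $D\Hom(P,Q)\cong\Hom(Q,P)$ at $P=Q=B$, together with the naturality just mentioned, yields $DB\cong B$ as bimodules. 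With these adjustments your argument is complete and matches the proof in the cited source.
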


\begin{theorem}\label{thm:1.3} Let $A$ and $B$ be derived equivalent algebras. Then $A$ is periodic if 
and only if $B$ is periodic. Moreover, if this is the case, then $A$ and $B$ have the same period. 
\end{theorem}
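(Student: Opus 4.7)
The plan is to reformulate periodicity inside a suitable triangulated category and then transfer that reformulation via a bimodule lift of the given derived equivalence. Recall that a self-injective algebra $\Lambda$ is periodic of period $n$ precisely when $\Lambda$, viewed as an object of $\mod \Lambda^e$ for $\Lambda^e = \Lambda \otimes_K \Lambda^{\op}$, satisfies $\Omega_{\Lambda^e}^n(\Lambda) \cong \Lambda$ with $n$ minimal. My first step would be to promote the hypothesis $D^b(\mod A) \simeq D^b(\mod B)$ to a derived equivalence of enveloping algebras that matches the regular bimodules. By Rickard's theorem on two-sided tilting complexes, the given equivalence is realized (after possibly replacing $B$ by a Morita equivalent algebra) as $- \otimes_A^L X$ for some two-sided tilting complex $X \in D^b(\mod A^{\op} \otimes_K B)$ admitting an inverse $X^{-1} \in D^b(\mod B^{\op} \otimes_K A)$. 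Twisting on both sides yields a triangulated equivalence
$$F : D^b(\mod A^e) \longrightarrow D^b(\mod B^e),\qquad M \longmapsto X^{-1} \otimes_A^L M \otimes_A^L X,$$
which satisfies $F(A) \cong X^{-1} \otimes_A^L X \cong B$ in $D^b(\mod B^e)$. Thus the question of periodicity becomes symmetric between $A$ and $B$.

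The second step is to rewrite periodicity purely in terms of the triangulated structure. Since $A$ is self-injective (this follows from periodicity), the enveloping algebra $A^e$ is also self-injective, and on the singularity category $D_{\mathrm{sg}}(A^e) = D^b(\mod A^e)/K^b(P_{A^e})$ the syzygy functor $\Omega_{A^e}$ coincides with the inverse shift $[-1]$. Hence $\Omega_{A^e}^n(A) \cong A$ in $\mod A^e$ is equivalent to $A \cong A[-n]$ in $D_{\mathrm{sg}}(A^e)$. Because $F$ preserves perfect complexes, it descends to an equivalence $D_{\mathrm{sg}}(A^e) \simeq D_{\mathrm{sg}}(B^e)$ still sending $A$ to $B$. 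Applying this descent, $A \cong A[-n]$ translates into $B \cong B[-n]$, and minimality is preserved by applying the quasi-inverse. Both implications (periodicity of $A$ from $B$ and vice versa, with the same period) follow at once.

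The main obstacle is really the bimodule lifting in the first step: one must know that a derived equivalence between $A$ and $B$ produces a derived equivalence between the enveloping algebras $A^e$ and $B^e$ matching the regular bimodules. This is not formal and relies on Rickard's existence and uniqueness results for two-sided tilting complexes, together with careful tracking of tensor products and duals over the ground field. Once this lift is in hand, the reinterpretation via singularity categories and the conclusion become routine bookkeeping inside a triangulated category.
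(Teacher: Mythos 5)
Your argument is correct and is essentially the argument behind the result the paper cites rather than proves: Theorem \ref{thm:1.3} is quoted from \cite[Theorem 2.9]{ES1} (together with \cite{Ric3}), whose proof likewise lifts a two-sided tilting complex $X$ to an equivalence $D^b(\mod A^e)\to D^b(\mod B^e)$ of the form $X^{-1}\otimes^{L}_A-\otimes^{L}_A X$ sending $A$ to $B$, and then reads off bimodule periodicity through the stable (equivalently, singularity) category of the self-injective enveloping algebra, exactly as you do. The only cosmetic remark is that no replacement of $B$ by a Morita equivalent algebra is needed, since Rickard's theorem already provides a two-sided tilting complex realizing some derived equivalence between $A$ and $B$, which is all your argument uses; note also that connectedness of $A$ and $B$ guarantees they are projective-free as bimodules, so the passage from stable isomorphisms back to isomorphisms in $\mod A^e$ and $\mod B^e$ is legitimate.
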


In the class of self-injective algebras derived equivalence implies stable equivalence (see \cite[Corollary 2.2]{Ric2}), so we 
conclude from \cite[Theorems 4.4 and 5.6]{CB} and \cite[Corollary 2]{KZ} that the following theorem holds.

\begin{theorem}\label{thm:1.4} Let $A$ and $B$ be derived equivalent self-injective algebras. Then the 
following equivalences are valid.
\begin{enumerate}[$(1)$]
\item $A$ is tame if and only if $B$ is tame.
\item $A$ is of polynomial growth if and only if $B$ is of polynomial growth. 
\end{enumerate}
\end{theorem}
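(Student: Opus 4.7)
The plan is to pass through the stable module category, where invariance of both tameness and polynomial growth is already available in the literature. The key point is that the hypothesis of self-injectivity allows us to promote a derived equivalence to a stable equivalence; this is the only nontrivial reduction step.

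First, I would invoke Rickard's result \cite[Corollary 2.2]{Ric2}: for self-injective algebras $A$ and $B$, a triangulated equivalence $D^b(\mod A) \simeq D^b(\mod B)$ induces a triangulated equivalence of stable module categories $\umod A \simeq \umod B$. This reduces both parts of the theorem to the corresponding invariance statements under stable equivalence of self-injective algebras. The reduction is legitimate because tameness and polynomial growth are properties concerning non-projective indecomposable modules, and removing the finitely many indecomposable projective-injectives when passing to $\umod$ does not affect these asymptotic conditions.

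Next, for part (1), I would apply \cite[Theorems 4.4 and 5.6]{CB} of Crawley-Boevey, which assert that tame representation type is invariant under stable equivalence between self-injective algebras. Concretely, since one-parameter families of indecomposables in $\mod A$ correspond, up to finitely many points, to one-parameter families in $\mod B$ under a stable equivalence, $A$ is tame if and only if $B$ is tame. For part (2), the analogous statement is supplied by \cite[Corollary 2]{KZ} of Krause--Zwara, which shows that polynomial growth is also preserved by stable equivalences between self-injective algebras; this is because the growth function controls the asymptotic number of parameters needed in each dimension, and this data is encoded in the stable category.

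The main obstacle in any self-contained proof would be the invariance steps themselves rather than the reduction: a stable equivalence does not in general preserve the dimension of a module, so transferring the one-parameter families defining tameness or the counting functions defining polynomial growth requires the careful analysis carried out in \cite{CB} and \cite{KZ}. Assuming those results as black boxes, as we do here, the proof is just the two-step chain \emph{derived equivalence} $\Rightarrow$ \emph{stable equivalence} $\Rightarrow$ \emph{invariance of tameness and polynomial growth}, with the first implication supplied by Rickard and the second by Crawley-Boevey and Krause-Zwara respectively.
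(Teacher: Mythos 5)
Your proposal is correct and follows exactly the paper's own argument: the authors likewise invoke Rickard's \cite[Corollary 2.2]{Ric2} to pass from derived to stable equivalence for self-injective algebras, and then cite \cite[Theorems 4.4 and 5.6]{CB} for the invariance of tameness and \cite[Corollary 2]{KZ} for the invariance of polynomial growth. No further comment is needed.
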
 

Finally, we present a simple construction of tilting complexes of length 2 over symmetric algebras, observed 
first by Okuyama \cite{O} and Rickard \cite{Ric2}. These tilting complexes have been used extensively to 
verify various cases of Brou\'e's abelian defect group conjecture \cite{CR}, as well as in realizing 
derived equivalences between symmetric algebras (see \cite{BiHS}, \cite{BoHS}, \cite{Ho}, \cite{Ka}, 
\cite{MS}, \cite{Ric2}). 

Let $A$ be a basic, connected, symmetric algebra with Grothendieck group of rank at least 2 and $A=A_A=P\oplus Q$ 
a proper decomposition in $\mod A$. Let $f:P\to Q'$ be a left $\add(Q)$-approximation of $P$, that is $Q'$ is a 
module in $\add(Q)$ and $f$ induces a surjective map 
$$\Hom_A(f,Q''):\Hom_A(Q',Q'')\to\Hom_A(P,Q''),$$ 
for any module $Q''$ in $\add(Q)$. Moreover, consider the following complexes in $K^b(P_A)$ 
$$T_1:\qquad \xymatrix{0\ar[r]& Q \ar[r] & 0}$$
concentrated in degree $0$, and 
$$T_2:\xymatrix{0\ar[r]&P\ar[r]^{f}&Q'\ar[r]&0}$$ 
concentrated in degrees $1$ and $0$. 

\begin{prop}\label{pro:1.5}
$T:=T_1\oplus T_2$ is a tilting complex in $K^b(P_A)$. 
\end{prop}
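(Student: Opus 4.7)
The plan is to verify conditions (T1) and (T2) separately, decomposing everything along the summands $T_1$ and $T_2$. Condition (T2) is the short part: the distinguished triangle $P \xrightarrow{f} Q' \to T_2 \to P[1]$ in $K^b(P_A)$ associated to the two-term complex $T_2$ puts $P$ in the triangulated subcategory generated by $Q'$ and $T_2$. Since $Q \in \add(T_1)$ and $Q' \in \add(Q) \subseteq \add(T_1)$, this triangle forces $P$, and hence $A = P \oplus Q$, into the triangulated subcategory generated by $\add(T)$; because $A$ generates $K^b(P_A)$, condition (T2) follows.

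For (T1) I would decompose $\Hom_{K^b(P_A)}(T, T[n])$ into the four pieces $\Hom_{K^b(P_A)}(T_i, T_j[n])$ with $i, j \in \{1, 2\}$. Degree reasons---$T_1$ is a stalk while $T_2$ lives in two consecutive degrees---restrict the non-trivial range to $n \in \{-1, 1\}$, and the piece $\Hom_{K^b(P_A)}(T_1, T_1[n])$ already vanishes for $n \neq 0$. In the positive case $n = 1$, a chain-level calculation identifies $\Hom_{K^b(P_A)}(T_2, T_2[1])$ and $\Hom_{K^b(P_A)}(T_2, T_1[1])$ with the cokernels of $\Hom_A(f, Q') : \Hom_A(Q', Q') \to \Hom_A(P, Q')$ and $\Hom_A(f, Q) : \Hom_A(Q', Q) \to \Hom_A(P, Q)$; both maps are surjective because $Q, Q' \in \add(Q)$ and $f$ is a left $\add(Q)$-approximation, so these $\Hom$ groups vanish.

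The main obstacle I expect is the negative case $n = -1$, where the cokernel interpretation is replaced by a kernel. A chain-level calculation identifies $\Hom_{K^b(P_A)}(T_2, T_2[-1])$ and $\Hom_{K^b(P_A)}(T_1, T_2[-1])$ with $\{g : Q' \to P \mid fg = 0 = gf\}$ and $\{g : Q \to P \mid fg = 0\}$ respectively, since $T_2[-1]$ has no entries in the relevant negative degrees and hence no nontrivial null-homotopies are available. To force these kernels to vanish I would invoke the symmetry of $A$: the natural isomorphism $D\Hom_A(X, Y) \cong \Hom_A(Y, X)$ for projective $X, Y$ over a symmetric algebra converts the surjectivity of $\Hom_A(f, Q'') : \Hom_A(Q', Q'') \to \Hom_A(P, Q'')$ into the injectivity of $\Hom_A(Q'', f) : \Hom_A(Q'', P) \to \Hom_A(Q'', Q')$ for every $Q'' \in \add(Q)$. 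Specialising to $Q'' = Q'$ and $Q'' = Q$ gives exactly the implication $fg = 0 \Rightarrow g = 0$ needed, so both kernels vanish and (T1) follows.
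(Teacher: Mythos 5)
Your proof is correct, and it is worth noting that the paper itself does not argue the statement at all: it simply cites \cite[Proposition 2.1]{Du2}, so your write-up supplies the self-contained verification that the paper delegates to Dugas. Your route is the standard one for Okuyama--Rickard two-term complexes, and all the essential points are in place: (T2) via the triangle linking $P$, $Q'$ and $T_2$; the vanishing of the positive-shift Homs from the surjectivity of $\Hom_A(f,Q'')$ for $Q''\in\add(Q)$; and, crucially, the negative-shift vanishing from the symmetry of $A$, using the nondegenerate associative pairing $\Hom_A(X,Y)\times\Hom_A(Y,X)\to K$ on projectives to convert surjectivity of $\Hom_A(f,Q'')$ into injectivity of $\Hom_A(Q'',f)$, i.e.\ $fg=0\Rightarrow g=0$. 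That last step is exactly where the hypothesis that $A$ is symmetric (and not merely self-injective) enters, and identifying it as ``the main obstacle'' is the right instinct. One small imprecision: $\Hom_{K^b(P_A)}(T_2,T_2[1])$ is not literally the cokernel of $\Hom_A(f,Q')$, since the null-homotopies also contribute terms of the form $f\circ s_1$ with $s_1\in\Hom_A(P,P)$; it is a quotient of $\Hom_A(P,Q')$ by a subspace containing the image of $\Hom_A(f,Q')$, so the vanishing you need still follows from surjectivity of $\Hom_A(f,Q')$, and the argument is unaffected.
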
 

\begin{proof} For a proof we refer to \cite[Proposition 2.1]{Du2}. \end{proof} 

\section{Generalized triangulation quivers}\label{sec:2} 
A quiver is a quadruple $Q=(Q_0,Q_1,s,t)$ consisting of a finite set $Q_0$ of vertices, a finite set $Q_1$ of arrows, and two maps 
$s,t:Q_1\to Q_0$ which associate to each arrow $\alpha\in Q_1$ its source $s(\alpha)\in Q_0$ and its target $t(\alpha)\in Q_0$. We 
assume throughout that any quiver is connected. 

A quiver $Q$ is called $2$-regular if for each $i\in Q_0$ there are precisely two arrows with source $i$ and precisely two arrows 
with target $i$. We recall that a \emph{triangulation quiver} is a pair $(Q,f)$ consisting of a $2$-regular quiver $Q$ and a 
permutation $f:Q_1\to Q_1$ of the arrows such that $t(\alpha)=s(f(\alpha))$ for each arrow $\alpha\in Q_1$, and $f^3$ is the identity 
on $Q_1$ (see \cite{ES2}, \cite{ES4}). Hence, all cycles of $f$ in $Q_1$ have length $3$ (triangles) or $1$ (loops). We only 
mention that every triangulation quiver with at least three vertices is a triangulation quiver associated to a directed triangulated 
surface (see \cite[Theorem 4.11]{ES2}). 

In this article, by a \emph{block} we mean a quiver of one of the forms 
$$\xymatrix@R=0.01cm{&\\&\mbox{ } \\ &\ar@(lu,ld)[d]_{\mbox{ }}\\& \circ }\qquad\qquad
\xymatrix@R=0.5cm{&\circ\ar[rd] & \\ \circ\ar[ru] & &\circ\ar[ll]}\qquad\qquad
\xymatrix@R=0.01cm{&&\\&&\\&\ar@(lu,ld)[d]_{\mbox{ }}&\\& \bullet\ar@<0.2cm>[r]  &\circ \ar@<0.1cm>[l]}$$ 
$$\mbox{type I}\qquad\qquad\quad\quad\mbox{type II}\qquad\qquad\qquad\qquad\mbox{type III}$$
$$\xymatrix{&\bullet\ar[rd] & \\ \circ\ar[ru]\ar[rd] & &\circ\ar[ll] \\  & \bullet\ar[ru] & }\qquad\qquad 
\xymatrix{\bullet\ar[rd] & & \bullet\ar[ll]\ar[dd] \\ &\circ \ar[ld]\ar[ru] &  \\ \bullet\ar[rr]\ar[uu] & & \bullet\ar[lu] }$$ 
$$\mbox{type IV}\qquad\qquad\quad\qquad\mbox{type V}$$
The vertices marked by the white circles are called \emph{outlets}. 

Let $B_1,\dots,B_r$, $r\geqslant 2$, be a family of blocks, and $W_1,\dots,W_r$ denote their outlets with $W$ their disjoint union. A \emph{glueing map} (for $B_1,\dots,B_r$) is an involution $\Theta:W\to W$ such that $\Theta(W_k)\cap W_k=\emptyset$, for any 
$k\in\{1,\dots,r\}$. Then we define the quiver $glue(B_1,\dots,B_r;\Theta)$ obtained from the disjoint union of the quivers 
$B_1,\dots,B_r$ by identifying vertices $x$ and $\Theta(x)$, where $x$ runs through $W$. We say that a finite connected quiver $Q$ 
is \emph{block decomposable} if $Q$ is of the form $Q=glue(B_1,\dots,B_r;\Theta)$ for a family of blocks $B_1,\dots,B_r$, 
$r\geqslant 2$, and a glueing map $\Theta$.    

\begin{df}\label{def:2.1} A \emph{generalized triangulation quiver} is a pair $(Q,*)$ consisting of a block decomposable 
quiver $Q$ together with a marking $*$ of one triangle in each block of type IV and V. \end{df} 

For a generalized triangulation quiver $(Q,*)$, $Q=glue(B_1,\dots,B_r;\Theta)$, let $Q^*$ be the quiver obtained from $Q$ by removing 
all the black vertices in all marked triangles and the arrows attached to them. We observe that $Q^*$ is a connected quiver and there 
is a permutation $f:Q^*_1\to Q^*_1$ of arrows of $Q^*$ such that $t(\eta)=s(f(\eta))$ for any arrow $\eta\in Q^*_1$, and $f^3$ is the 
identity. Namely, $f$ fixes any loop of a block of type I and rotates the three arrows of any triangle comming from blocks of types 
II-V. Note also that after removing marked triangles from $Q$, blocks of type IV and V become usual triangles in $Q^*$ (with one 
or two $1$-regular vertices; see below), whereas blocks of types I-III remain unchanged in $Q^*$. 

Moreover, any white vertex (outlet) $x\in Q_0^*$ is $2$-regular, that is $x$ is the source of exactly two arrows and the 
target of exactly two arrows (in $Q^*$). On the other hand, every black vertex $x\in Q^*_0$ not lying in a block of type 
III is $1$-regular, which means that $x$ is the source of exactly one arrow and the target of exactly one arrow (in $Q^*$). 
In particular, $Q^*$ is 2-regular (and so a triangulation quiver) if and only if $Q$ does not contain blocks of types IV and V. 
However, we can define an involution $\bar{\mbox{ }}:Q^*_1\to Q^*_1$ which assigns to each arrow $\alpha \in Q^*_1$ with 
$s(\alpha)$ being white, the second arrow $\bar{\alpha}$ with $s(\bar{\alpha})=s(\alpha)$, and to each arrow $\beta\in Q^*_1$ with $s(\beta)$ being black, the same arrow $\bar{\beta}=\beta$. Then we obtain the second permutation $g:Q^*_1\to Q^*_1$ defined 
as $g(\alpha)=\overline{f(\alpha)}$ for any arrow $\alpha\in Q^*_1$. For each arrow $\alpha\in Q^*_1$ we denote by 
$\mathcal{O}(\alpha)$ the $g$-orbit of $\alpha$ in $Q^*_1$ and set $n_\alpha= |\mathcal{O}(\alpha)|$. Hence, $\mathcal{O}(\alpha)$ 
is a $g$-cycle of the form $(\alpha\mbox{ } g(\alpha)\mbox{ }\dots\mbox{ } g^{n_\alpha -1}(\alpha))$. We write $\mathcal{O}(g)$ 
for the set of all $g$-orbits in $Q^*_1$. 

One should think of $Q^*$ as of a quasi-triangulated quiver, by which we mean a triangulation quiver, besides that not all 
vertices in $Q^*$ are required $2$-regular. But still, it admits a permutation $f$ and an involution, which produce permutation $g$, 
and hence, allow to consider associated algebras as presented in the next section. We note the following obvious fact. 

\begin{prop}\label{prop:2.2} Let $(Q,*)$ be a generalized triangulation quiver. The following statements are equivalent. 
\begin{enumerate}[(i)]
\item $Q=Q^*$.
\item There is a triangulation quiver $(Q,f)$.
\item $Q=glue(B_1,\dots,B_r;\Theta)$ with all blocks $B_1,\dots,B_r$ of types I-III. 
\end{enumerate} 
\end{prop}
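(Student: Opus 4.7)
The plan is to handle the equivalence (i)$\Leftrightarrow$(iii) directly from the construction of $Q^*$, and then to split (ii)$\Leftrightarrow$(iii) into two short verifications, each obtained by inspecting the five block pictures. No genuine obstacle is expected; the proposition is essentially bookkeeping.

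First I would observe that (i)$\Leftrightarrow$(iii) is built into the definition of $Q^*$: by construction $Q^*$ differs from $Q$ exactly by the removal of the black vertices lying in \emph{marked} triangles, together with their incident arrows. Since the marking $*$ is defined only on triangles of blocks of types IV and V, no vertices or arrows are deleted precisely when $Q$ contains no block of either of those two types. Hence $Q=Q^*$ if and only if $Q$ is a glueing of blocks of types I--III.

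For (iii)$\Rightarrow$(ii), I would first check $2$-regularity of $Q$ when only blocks of types I--III appear. Each outlet is $1$-regular inside its block and becomes $2$-regular after identification under the glueing involution $\Theta$; the only non-outlet vertex occurring among the three block types is the black vertex of a type III block, which carries one loop together with the two opposite arrows to its outlet, and is therefore already $2$-regular. A permutation $f:Q_1\to Q_1$ is then defined block by block: fix the loop of any block of type I, cycle the three sides of the triangle of any block of type II, and on a block of type III with loop $\ell$ on the black vertex and arrows $\alpha\colon\bullet\to\circ$, $\beta\colon\circ\to\bullet$, set $f(\ell)=\alpha$, $f(\alpha)=\beta$, $f(\beta)=\ell$. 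A glance at the pictures confirms both $t(\xi)=s(f(\xi))$ for every arrow $\xi$ and $f^3=\mathrm{id}$, so $(Q,f)$ is a triangulation quiver.

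Conversely, for (ii)$\Rightarrow$(iii), the crucial remark is that a black vertex inside a block of type IV or V is never an outlet and so is never identified with any other vertex by the glueing involution $\Theta$. Reading off the two pictures, each such black vertex has exactly one incoming and one outgoing arrow in $Q$, and is therefore $1$-regular. Since any triangulation quiver is $2$-regular by definition, the existence of a permutation $f$ satisfying the triangulation axioms on $Q$ forbids the occurrence of blocks of types IV or V, leaving only blocks of types I--III. The only mildly delicate point, which I would treat carefully when writing up, is the verification of $2$-regularity and of $f^3=\mathrm{id}$ inside a block of type III, because this is the single block whose black vertex survives in $Q^*$ and whose three arrows (a loop together with a double arrow) form a slightly non-standard $f$-cycle.
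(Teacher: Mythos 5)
The paper offers no argument at all here: Proposition \ref{prop:2.2} is recorded as an ``obvious fact'', so your bookkeeping proof is exactly the kind of verification the authors leave to the reader, and in outline it is correct. The equivalence (i)$\Leftrightarrow$(iii) is indeed immediate from the construction of $Q^*$ (vertices are deleted precisely when a marked triangle exists, and every block of type IV or V carries a marked triangle containing a black vertex), and your block-by-block definition of $f$ for (iii)$\Rightarrow$(ii), including the $3$-cycle $(\ell\,\alpha\,\beta)$ on a type III block, matches the permutation the paper uses (compare the $f$-orbit $(\alpha\ \beta\ \mu)$ in Example \ref{ex:3.2}).

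One factual slip in (ii)$\Rightarrow$(iii): it is not true that every black vertex of a block of type V has exactly one incoming and one outgoing arrow. From the picture of $B_{s+j}$, the vertices $y_{2j}$ and $x_{2j}$ have one incoming and two outgoing arrows, while $y_{1j}$ and $x_{1j}$ have two incoming and one outgoing arrow; only the black vertices $c_i,d_i$ of a type IV block are $1$-regular. Your conclusion survives with a trivial adjustment: these black vertices are never outlets, hence never identified under $\Theta$, so their local in/out degrees persist in $Q$ and in every case violate $2$-regularity (alternatively, the central outlet $z_j$ of a type V block is already $2$-regular inside its block, so after glueing it becomes $3$-regular, which also rules out a triangulation quiver). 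With that correction the argument is complete and coincides with what the paper treats as obvious.
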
 

Let us present one example of a generalized triangulation quiver which is a glueing of blocks of all five possible types.  

\begin{exmp}\label{exm:2.3} Let $(Q,*)$ be the following generalized triangulation quiver 
$$\xymatrix@R=0.01cm@C=0.3cm{
&&&&&&&&&\\ 
&&&&\bullet_{y_2}\ar[rr]^{\epsilon}\ar[rrdddd]_(0.2){\rho} & & \bullet_{y_1}\ar@/^30pt/[ldddddd]^(0.25){\psi} &&&\\
&&&&&&&&&\\
&&&&&&&&&\\
&&&&&&&&&\\
&&&&\bullet_{x_2}\ar[rr]^{\sigma}_{*}\ar[rruuuu]^(0.2){\eta} && \bullet_{x_1}\ar[ldd]^{\omega} &&&\\
&&&&&&&&&\\
&&&&&\circ_{z}\ar[luu]^{\gamma}\ar@/^30pt/[luuuuuu]^(0.75){\phi}\ar[rdddd]^{\pi} &&&&\\
&&&&&&&&&\\
&&&&&&&&&\\
&&&&&&&&&\\
&&\circ_{c_4}\ar[rr]^{\beta_4}\ar@<-0.1cm>[rdddd]_{\xi_4} &&\circ_{b_4}\ar[ldddd]^{\nu_4}\ar[ruuuu]^{\lambda} & &\circ_{a_5}\ar[ll]^{\chi}\ar[rr]^{\alpha_5} 
 & & \circ_{c_5}\ar[rdddd]^{\beta_5}\ar@<-0.1cm>[ldddd]_{\xi_5} &\\
&&&&&&&&&\\
&&&&&&&&&\\
&&&&&&&&&\\
&\circ_{a_4}\ar[ldddd]_{w_1}\ar[ruuuu]^{\alpha_4} &&\circ_{d_4}\ar@<-0.1cm>[luuuu]_{\mu_4}\ar[ll]^{\delta_4} &&&& 
 \circ_{d_5}\ar[luuuu]^{\delta_5}\ar@<-0.1cm>[ruuuu]_{\mu_5} && \circ_{b_5}\ar[ll]^{\nu_5}\ar@<0.2cm>[lddddddddddd]^{v_2} \\
&&&&&&&&&\\
&&&&&&&&&\\ 
\ar@(lu,ld)[d]_{\zeta}&&&&&& \ar@(lu,ld)[d]_{\theta}& & &\\ 
\circ^{p_1}\ar[rrddddddd]_{u_1}& &&&&& \bullet^{p_3}\ar@<0.2cm>[rr]^{\kappa}  & &
\circ^{p_2} \ar@<0.1cm>[ll]^{\iota}\ar[ruuuu]_{u_2} &\\
&&&&&&&&&\\
&&&&&&&&&\\
&&&&&&&&&\\
&&&\bullet_{c_1}\ar[lddd]_{\alpha_1} &&\bullet_{d_2}\ar[rddd]^{\delta_2} &&\bullet_{d_3}\ar[rddd]^(0.4){\delta_3} & &\\
&&&&&&&&&\\
&&&&&&&&&\\
&&\circ_{a_1}\ar[luuuuuuuuuuu]^{v_1}\ar[rr]_{\tau_1}^{*} & & \circ_{a_2}\ar[ruuu]^{\nu_2}\ar[rddd]_{\beta_2}\ar[luuu]_{\beta_1}\ar[lddd]^{\nu_1} & & \circ_{a_3}\ar[ll]_{\tau_2}^{*}\ar[ruuu]^{\nu_3}\ar[rddd]_{\beta_3} & & 
\circ_{b_3}\ar[ll]_{\tau_3}^{*}\ar[uuuuuuu]^(0.7){w_2} &\\ 
&&&&&&&&&\\
&&&&&&&&&\\
&&&\bullet_{d_1}\ar[luuu]^{\delta_1} &&\bullet_{c_2}\ar[ruuu]_{\alpha_2} & &\bullet_{c_3}\ar[ruuu]_{\alpha_3} & & }$$ 
Then $Q$ is a gluing of: one block of type I, seven blocks of type II, one block of type III, three blocks of type IV and one 
block of type V. 

The associated quiver $Q^*$ is obtained from $Q$ by removing the vertices $x_1,x_2,c_1,c_2,c_3$ and the arrows 
$\rho,\eta,\gamma,\sigma,\omega, \alpha_1,\beta_1,\alpha_2,\beta_2,\alpha_3,\beta_3$. Moreover, we have the following $f$-orbits in $Q^*_1$: 
$$(\phi\mbox{ }\epsilon\mbox{ }\psi), \ (\lambda\mbox{ }\pi\mbox{ }\chi), \ (\nu_1\mbox{ }\delta_1\mbox{ }\tau_1), \ 
(\nu_2\mbox{ }\delta_2\mbox{ }\tau_2), \ (\nu_3\mbox{ }\delta_3\mbox{ }\tau_3), $$ 
$$(\alpha_4\mbox{ }\xi_4\mbox{ }\delta_4), \ (\beta_4\mbox{ }\nu_4\mbox{ }\mu_4),\ (\alpha_5\mbox{ }\xi_5\mbox{ }\delta_5), \ 
(\beta_5\mbox{ }\nu_5\mbox{ }\mu_5),\ (u_1\mbox{ }v_1\mbox{ }w_1),\ (u_2\mbox{ }v_2\mbox{ }w_2),\ 
(\theta\mbox{ }\kappa\mbox{ }\iota),\ (\zeta).$$ 
Then the set $\mathcal{O}(g)$ of $g$-orbits in $Q^*_1$ consists of the following five orbits: 
$$\mathcal{O}(\phi)=(\phi\mbox{ }\epsilon\mbox{ }\psi\mbox{ }\pi\mbox{ }\alpha_5\mbox{ }\beta_5\mbox{ }v_2\mbox{ }\tau_3\mbox{ }\tau_2\mbox{ }
\nu_1\mbox{ }\delta_1\mbox{ }v_1\mbox{ }\alpha_4\mbox{ }\beta_4\mbox{ }\lambda),$$ 
$$\mathcal{O}(\chi)=(\chi\mbox{ }\nu_4\mbox{ }\delta_4\mbox{ }w_1\mbox{ }\zeta\mbox{ }u_1\mbox{ }\tau_1\mbox{ }\nu_2\mbox{ }
\delta_2\mbox{ }\nu_3\mbox{ }\delta_3\mbox{ }w_2\mbox{ }\iota\mbox{ }\kappa\mbox{ }u_2\mbox{ }\nu_5\mbox{ }\delta_5),$$ 
$$\mathcal{O}(\xi_5)=(\xi_5\mbox{ }\mu_5),\,\mathcal{O}(\xi_4)=(\xi_4\mbox{ }\mu_4) ,\mbox{ and }\mathcal{O}(\theta)=(\theta).$$   
\end{exmp} 

\bigskip 

\section{Weighted generalized triangulation algebras}\label{sec:3} 
Let $(Q,*)$ be a generalized triangulation quiver, with $Q=glue(B_1,\dots,B_r;\Theta)$. Moreover, let $Q^*$ be the 
associated quiver, and $f,g:Q^*_1\to Q^*_1$ the permutations, for which $f^3=id_{Q_1^*}$ and $g=\bar{f}$, defined 
in the previous section. Recall also that for each arrow $\alpha\in Q^*_1$, we denote by $\mathcal{O}(\alpha)$ the 
$g$-orbit of $\alpha$ in $Q^*_1$, $n_\alpha=|\mathcal{O}(\alpha)|$, and by $\mathcal{O}(g)$ the set of all $g$-orbits 
in $Q^*_1$. A function 
$$m_\bullet:\mathcal{O}(g)\to\mathbb{N}^*:=\mathbb{N}\setminus\{0\}$$ 
is called a \emph{weight function} of $(Q,*)$, and a function 
$$c_\bullet:\mathcal{O}(g)\to K^*:=K\setminus\{0\}$$ 
is called a \emph{parameter function} of $(Q,*)$. In other words, weight and parameter functions are functions on $Q_1^*$ 
constant on $g$-orbits. We write briefly $m_\alpha=m_{\mathcal{O}(\alpha)}$ and $c_\alpha=c_{\mathcal{O}(\alpha)}$ for 
$\alpha\in Q^*_1$. We will say that an arrow $\alpha\in Q^*_1$ is \emph{virtual} if $m_\alpha n_\alpha =2$. Following \cite{ES5}, 
we assume that given weight function $m_\bullet$ of $(Q,*)$ satisfies the following restrictions: 
\begin{enumerate}[(1)] 
\item $m_\alpha n_\alpha\geqslant 2$ for all arrows $\alpha\in Q^*_1$,
\item $m_\alpha n_\alpha\geqslant 3$ for all arrows $\alpha\in Q^*_1$ such that $\bar{\alpha}$ is virtual but not a loop, 
\item $m_\alpha n_\alpha\geqslant 4$ for all arrows $\alpha\in Q^*_1$ such that $\bar{\alpha}$ is a virtual loop.
\end{enumerate} 
Condition (1) is a general assumption, whilst (2) and (3) are included to eliminate particular small algebras 
\cite[see Sections 3 and 4]{ES5}.  

For each arrow $\alpha\in Q^*_1$, we define the paths 
$$A_\alpha=\alpha g(\alpha)\dots g^{m_\alpha n_\alpha -2}(\alpha)\quad\mbox{and}\quad B_\alpha=
\alpha g(\alpha)\dots g^{m_\alpha n_\alpha -1}(\alpha)$$ 
along the $g$-cycle of $\alpha$, of lengths $m_\alpha n_\alpha - 1$ and $m_\alpha n_\alpha$, respectively. If 
$m_\alpha n_\alpha\geqslant 3$, we consider also the following path $A_\alpha '=\alpha g(\alpha)\dots g^{m_\alpha n_\alpha -3}(\alpha)$ 
in $Q^*$ of length $m_\alpha n_\alpha-2$. Obviously, we have $A_\alpha g^{-1}(\alpha)=B_\alpha$ and 
$A'_\alpha g^{-2}(\alpha)=A_\alpha$ if $A'_\alpha$ is defined. 

A loop $\alpha\in Q^*_1$ with $f(\alpha)=\alpha$ (i.e. coming from a block of type I) is said to be a \emph{border loop} of $(Q,*)$, 
and then the vertex $s(\alpha)$ is called a \emph{border vertex} of $(Q,*)$. We denote by $\partial(Q,*)$ the set of all border 
vertices of $(Q,*)$. If $\partial(Q,*)$ is not empty, then any function 
$$b_\bullet:\partial(Q,*)\to K$$ 
is said to be a \emph{border function} of $(Q,*)$. 

We shall define a weighted generalized triangulation algebra 
$$\Lambda(Q,*,m_\bullet,c_\bullet,b_\bullet),$$ 
which in case $Q$ is a glueing of blocks of types I-III coincides with the socle deformed weighted triangulation algebra 
$\Lambda(Q,f,m_\bullet,c_\bullet,b_\bullet)$ (see Definition \ref{df:4.1}). The algebra $\Lambda(Q,*,m_\bullet,c_\bullet,b_\bullet)$ 
will be given by the quiver $Q$ and a set of relations reflecting the blocks defining the generalized triangulation quiver $(Q,*)$ 
as well as a quasi-triangulated structure on $Q^*$. 

We assume that if blocks of type IV occur in $(Q,*)$, then these are blocks $B_1,\dots,B_s$ with the arrows in $B_i$ denoted as follows 
$$\xymatrix{ & \bullet \mbox{ }c_i  \ar[ld]_{\alpha_i} & \\ 
a_i\mbox{ }\circ\ar[rr]_{\tau_i}^{*} & & \circ\mbox{ } b_i \ar[lu]_{\beta_i}\ar[ld]^{\nu_i} \\ 
 & \bullet\mbox{ }d_i\ar[lu]^{\delta_i} & }$$ 
for $i\in\{1,\dots,s\}$. We set $s=0$ if no block of type IV occurs in $(Q,*)$. 

If blocks of type V occur in $(Q,*)$, then these are blocks $B_{s+1},\dots,B_{s+t}$ with the blocks $B_{s+j}$ of the forms 
$$\xymatrix@C=0.5cm{&y_{2j}\bullet\ar[rrd]^(0.33){\rho_j}\ar[rr]^{\epsilon_j}&&\bullet y_{1j}\ar@/^{40pt}/[ldd]^{\psi_j}& \\ 
&x_{2j}\bullet\ar[rr]^{\sigma_j}_{*}\ar[rru]^(0.2){\eta_j}&&\bullet x_{1j}\ar[ld]^{\omega_j}& \\ 
&&\circ z_j\ar[lu]^{\gamma_j}\ar@/^{40pt}/[luu]^{\phi_j}&& }$$ 
for $j\in\{1,\dots,t\}$. We set $t=0$ if no block of type V occurs in $(Q,*)$. 

The definition of a weighted generalized triangulation algebra is now as follows. 

\begin{df}\label{df:3.1} The \emph{weighted generalized triangulation algebra} 
$$\Lambda(Q,*,m_\bullet,c_\bullet,b_\bullet)$$ 
is the quotient $KQ/I$ of the path algebra $KQ$ by the ideal $I=I(Q,*,m_\bullet,c_\bullet,b_\bullet)$ of $KQ$ generated by the following relations: 
\begin{enumerate}[1)] 
\item $\alpha^2-c_{\bar{\alpha}}A_{\bar{\alpha}}-b_{s(\alpha)}B_\alpha$, for all loops $\alpha\in Q_1$ from blocks of type I, 

\item $\alpha f(\alpha)-c_{\bar{\alpha}}A_{\bar{\alpha}}$, for all arrows $\alpha\in Q_1$ from blocks of types II and III,

\item for each $i\in\{1,\dots,s\}$ the relations 
\begin{itemize}
\item $\nu_i\delta_i-\beta_i\alpha_i-c_{\bar{\nu_i}}A_{\bar{\nu_i}}$, $\delta_i\tau_i-c_{\bar{\delta_i}}A_{\bar{\delta_i}}$, 
$\tau_i\nu_i-c_{\bar{\tau_i}} A_{\bar{\tau_i}}$, $\alpha_i\tau_i$, $\tau_i\beta_i$, $\delta_i\tau_i g(\tau_i)$, 
\item $\delta_i g(\delta_i)f(g(\delta_i))$, except $\tau_i$ is virtual or $g(\delta_i)\in\{\nu_1,\dots,\nu_s,\phi_1,\dots,\phi_t\}$, 
and: \item $\tau_i g(\tau_i) f(g(\tau_i))$ except $m_{\nu_i}=1$ and $n_{\nu_i}=3$ or 
$g(\tau_i)\in\{\nu_1,\dots,\nu_s,\phi_1,\dots,\phi_t\}$, 
\end{itemize}

\item  for each $j\in\{1,\dots,t\}$ the relations: 
\begin{itemize}
\item $\phi_j\epsilon_j-\gamma_j\eta_j-c_{\bar{\phi_j}}A_{\bar{\phi_j}}$, 
$\epsilon_j\psi_j-\rho_j\omega_j-c_{\bar{\epsilon_j}}A_{\bar{\epsilon_j}}$, $\psi_j\phi_j-c_{\bar{\psi_j}}A_{\bar{\psi_j}}$, 
$\gamma_j\sigma_j-\phi_j\rho_j$, $\sigma_j\omega_j-\eta_j\psi_j$, $\omega_j\gamma_j$, $\omega_j\phi_j$, $\psi_j\gamma_j$, 
$\phi_j\epsilon_j\psi_j\phi_j$, $\epsilon_j\psi_j\phi_j\epsilon_j$, $\psi_j\phi_j\epsilon_j\psi_j$, $B_{\phi_j}\bar{\phi}_j$, 
and: \item $\psi_j g(\psi_j) f(g(\psi_j))$, except $g(\psi_j)\in\{\nu_1,\dots,\nu_s,\phi_1,\dots,\phi_t\}$, 
\end{itemize}  

\item $\alpha f(\alpha) g(f(\alpha))$, for all arrows $\alpha\in Q_1$ from blocks of types I-III, unless $f^2(\alpha)$ is virtual 
or unless $f(\bar{\alpha})$ is virtual with $m_{\bar{\alpha}}=1$ and $n_{\bar{\alpha}}=3$, 

\item $\alpha g(\alpha) f(g(\alpha))$, for all arrows $\alpha\in Q_1$ from blocks of types I-III such that $g(\alpha)$ is not in 
$\{\nu_1,\dots,\nu_s,\phi_1,\dots,\phi_t\}$, and unless $f(\alpha)$ is virtual or unless $f^2(\alpha)$ is virtual with $m_{f(\alpha)}=1$ 
and $n_{f(\alpha)}=3$. 
\end{enumerate}
\end{df} 

We abbreviate $\Lambda(Q,*,m_\bullet,c_\bullet)=\Lambda(Q,*,m_\bullet,c_\bullet,b_\bullet)$ if $\partial(Q,*)$ is empty or 
$b_\bullet\equiv 0$. We note that for any weighted generalized triangulation algebra $\Lambda$ its opposite algebra 
$\Lambda^{\op}$ is also a weighted generalized triangulation algebra. 

\begin{exmp} \label{ex:3.2} Let $(Q,*)$ be the generalized triangulation quiver of the form 
$$\xymatrix@R=0.01cm{
&&&&\\
&&&3\bullet\ar[dddd]^{\sigma}_{*}\ar[rdddd]^(0.2){\eta}&4\bullet\ar[ldddd]_(0.3){\rho}\ar[dddd]^{\epsilon}\\
&\ar@(lu,ld)[d]_{\mu}&&&\\
& 1\bullet\ar@<0.2cm>[r]^{\alpha}  & 2\circ \ar@<0.1cm>[l]^{\beta}\ar[ruu]^{\gamma}\ar@/^{30pt}/[rruu]^{\phi}&&\\
&&&& \\
&&&5\bullet\ar[luu]^{\omega}&6\bullet\ar@/^{30pt}/[lluu]^{\psi} \\ }$$ 
which is glueing of one block of type III with one block of type V. Then the associated quasi-triangulation quiver $Q^*$ has 
the following shape 
$$\xymatrix@R=0.01cm{
&&&\\
&&&4\bullet\ar[dddd]^{\epsilon}\\
&\ar@(lu,ld)[d]_{\mu}&&\\
& 1\bullet\ar@<0.2cm>[r]^{\alpha}  & 2\circ \ar@<0.1cm>[l]^{\beta}\ar[ruu]^{\phi}&\\
&&&& \\
&&&6\bullet\ar[luu]^{\psi} \\ }$$ 
and the permutation $f:Q^*_1\to Q^*_1$ has two cycles: $(\alpha\mbox{ }\beta\mbox{ }\mu)$ and $(\phi\mbox{ }\epsilon\mbox{ }\psi)$. 
So we have two $g$-orbits: $\mathcal{O}(\alpha)=(\alpha\mbox{ }\phi\mbox{ }\epsilon\mbox{ }\psi\mbox{ }\beta)$ and 
$\mathcal{O}(\mu)=(\mu)$, and clearly, the border $\partial(Q,*)$ is empty. 

Let $m_\bullet:\mathcal{O}(g)\to\mathbb{N}^*$ and $c_\bullet:\mathcal{O}(g)\to K^*$ be weight and parameter functions of 
$(Q,*)$. We set $m=m_{\mathcal{O}(\alpha)}$, $n=m_{\mathcal{O}(\mu)}$, $c=c_{\mathcal{O}(\alpha)}$ and $d=c_{\mathcal{O}(\mu)}$. 
According to general assumption, we have $n\geqslant 2$. Then the weighted generalized triangulation algebra 
$\Lambda(Q,*,m_\bullet,c_\bullet)$ is given by the quiver $Q$ and the following relations: 
$$\alpha\beta=d\mu^{n-1},\beta\mu=c(\phi\epsilon\psi\beta\alpha)^{m-1}\phi\epsilon\psi\beta, 
\mu\alpha=c(\alpha\phi\epsilon\psi\beta)^{m-1}\alpha\phi\epsilon\psi,$$
$$\phi\epsilon=\gamma\eta+c(\beta\alpha\phi\epsilon\psi)^{m-1}\beta\alpha\phi\epsilon,
\epsilon\psi=\rho\omega+c(\epsilon\psi\beta\alpha\phi)^{m-1}\epsilon\psi\beta\alpha,$$
$$\psi\phi=c(\psi\beta\alpha\phi\epsilon)^{m-1}\psi\beta\alpha\phi,\gamma\sigma=\phi\rho,\sigma\omega=\eta\psi,
\omega\gamma=0,\omega\phi=0,\psi\gamma=0,$$ 
$$\phi\epsilon\psi\phi=0, \epsilon\psi\phi\epsilon=0, \psi\phi\epsilon\psi=0, 
(\phi\epsilon\psi\beta\alpha)^m\beta=0, \psi\beta\mu=0,$$ 
$$\beta\mu^2=0,\mu\alpha\phi=0,\mu^2\alpha=0,\mbox{ and}$$ 
$$\alpha\beta\alpha=0\mbox{ and }\beta\alpha\beta=0\mbox{ if }n\geqslant 3\mbox{ ($\mu$ is not virtual).}$$
\end{exmp} 

\begin{exmp}\label{ex:3.3} Let $(Q,*)$ be the generalized triangulation quiver of the form 
$$\xymatrix@R=0.01cm@C=0.5cm{
&&&&&&&&\\ 
&&&\bullet_{1}\ar[rr]^{\epsilon}\ar[rrdddd]_(0.15){\rho} & & \bullet_{2}\ar@/^29pt/[ldddddd]^(0.15){\psi} &&&\\
&&&&&&&&\\
&&&&&&&&\\
&&&&&&&&\\
&&&\bullet_{3}\ar[rr]^{\sigma}_{*}\ar[rruuuu]^(0.15){\eta} && \bullet_{4}\ar[ldd]^{\omega} &&&\\
&&&&&&&&\\
&&&&\circ_{5}\ar[luu]^{\gamma}\ar@/^29pt/[luuuuuu]^(0.85){\phi}\ar[rdddd]^(0.7){\pi_1} &&&&\\
&&&&&&&&\\
&&&&&&&&\\
&&&&&&&&\\
&&&\circ_7 \ar[ruuuu]^(0.3){\theta_1}\ar[ldddd]_{\pi_3} & &\circ_6\ar[ll]^{\lambda_1}\ar[rdddd]^{\pi_2} & & &\\
&&&&&&&&\\
&&&&&&&&\\
\ar@(lu,ld)[d]_{\kappa} &&&&&&\ar@(ru,rd)[d]^{\zeta} & &\\ 
\bullet_9\ar@<0.2cm>[rr]^{\mu} & &\circ_8\ar@<0.1cm>[ll]^{\xi}\ar[rdddd]_{\lambda_3} && \bullet_{11}\ar[ldddd]_{\alpha} 
 &   & \circ_{14}\ar[ldddd]^{\lambda_2} & &\\
&&&&&&&&\\
&&&&&&&&\\
&&&&&&&&\\
&& &\circ_{10}\ar[rr]_{\tau}^{*}\ar[uuuuuuuu]^{\theta_3} &  &\circ_{13}\ar[uuuuuuuu]_{\theta_2}\ar[luuuu]_{\beta}\ar[ldddd]^{\nu} & & &\\
&&&&&&&&\\
& & & & & & & &\\ 
&&&&&&&&\\
&& && \bullet_{12}\ar[luuuu]^{\delta}& & & & }$$ 
We observe that $Q$ is glueing of one block of type I, three blocks of type II, one block of type III, one block 
of type IV and one block of type V. 

The associated quiver $Q^*$ is obtained from $Q$ by removing the vertices $3,4,11$ and the arrows 
$\rho,\eta, \gamma,\sigma,\omega,\alpha,\beta$. The permutation $f:Q^*_1\to Q^*_1$ has the following orbits: $(\zeta)$, 
$(\mu\mbox{ }\xi\mbox{ }\kappa)$, $(\nu\mbox{ }\delta\mbox{ }\tau)$, $(\phi\mbox{ }\epsilon\mbox{ }\psi)$ and 
$(\pi_i\mbox{ }\lambda_i\mbox{ }\theta_i)$ for $i\in\{1,2,3\}$. Hence the permutation $g:Q^*_1\to Q^*_1$ has the following orbits: 
$$\mathcal{O}(\phi)=(\phi\mbox{ }\epsilon\mbox{ }\psi\mbox{ }\pi_1\mbox{ }\pi_2\mbox{ }\zeta\mbox{ }\lambda_2\mbox{ }\nu\mbox{ }\delta
\mbox{ }\theta_3\mbox{ }\theta_1),$$ 
$$\mathcal{O}(\lambda_1)=(\lambda_1\mbox{ }\pi_3\mbox{ }\xi\mbox{ }\mu\mbox{ }\lambda_3\mbox{ }\tau\mbox{ }\theta_2),$$
$$\mathcal{O}(\kappa)=(\kappa).$$ 
We also note that $\partial(Q,*)=\{14\}$. 

Let $m_\bullet:\mathcal{O}(g)\to\mathbb{N}^*$ and $c_\bullet:\mathcal{O}(g)\to K^*$ be weight and parameter functions of $(Q,*)$. We set 
$m=m_{\mathcal{O}(\phi)}$, $n=m_{\mathcal{O}(\lambda_1)}$, $p=m_{\mathcal{O}(\kappa)}$, $a=c_{\mathcal{O}(\phi)}$, $c=c_{\mathcal{O}(\lambda_1)}$, 
$d=c_{\mathcal{O}(\kappa)}$. We note that $m\geqslant 1$, $n\geqslant 1$, $p\geqslant 2$ and for $p=2$ the loop $\kappa$ is virtual. Moreover, 
let $b_\bullet:\partial(Q,*)\to K$ be a border function with $b=b_{14}$. 

Then the weighted generalized triangulation algebra $\Lambda=\Lambda(Q,*,m_\bullet,c_\bullet,b_\bullet)$ is given by the quiver $Q$ and the relations: 
\begin{enumerate}[(1)] 
\item $\zeta^2=a(\lambda_2\nu\delta\theta_3\theta_1\phi\epsilon\psi\pi_1\pi_2\zeta)^{m-1}\lambda_2\nu\delta\theta_3\theta_1\phi\epsilon\psi\pi_1\pi_2 + 
b(\zeta\lambda_2\nu\delta\theta_3\theta_1\phi\epsilon\psi\pi_1\pi_2)^m$, 

\item $\pi_1\lambda_1=a(\phi\epsilon\psi\pi_1\pi_2\zeta\lambda_2\nu\delta\theta_3\theta_1)^{m-1}\phi\epsilon\psi\pi_1\pi_2\zeta\lambda_2\nu\delta\theta_3$, 
$\newline$ 
$\lambda_1\theta_1=a(\pi_2\zeta\lambda_2\nu\delta\theta_3\theta_1\phi\epsilon\psi\pi_1)^{m-1}\pi_2\zeta\lambda_2\nu\delta\theta_3\theta_1\phi\epsilon\psi$, 
$\newline$
$\theta_1\pi_1=c(\pi_3\xi\mu\lambda_3\tau\theta_2\lambda_1)^{n-1}\pi_3\xi\mu\lambda_3\tau\theta_2$, 
$\pi_2\lambda_2=c(\lambda_1\pi_3\xi\mu\lambda_3\tau\theta_2)^{n-1}\lambda_1\pi_3\xi\mu\lambda_3\tau$, 
$\newline$
$\lambda_2\theta_2=
a(\zeta\lambda_2\nu\delta\theta_3\theta_1\phi\epsilon\psi\pi_1\pi_2)^{m-1}\zeta\lambda_2\nu\delta\theta_3\theta_1\phi\epsilon\psi\pi_1$, 
$\newline$ 
$\theta_2\pi_2=a(\nu\delta\theta_3\theta_1\phi\epsilon\psi\pi_1\pi_2\zeta\lambda_2)^{m-1}\nu\delta\theta_3\theta_1\phi\epsilon\psi\pi_1\pi_2\zeta$, 
$\newline$ $\pi_3\lambda_3=a(\theta_1\phi\epsilon\psi\pi_1\pi_2\zeta\lambda_2\nu\delta\theta_3)^{m-1}\theta_1\phi\epsilon\psi\pi_1\pi_2\zeta\lambda_2\nu\delta$, 
$\newline$ $\lambda_3\theta_3=c(\xi\mu\lambda_3\tau\theta_2\lambda_1\pi_3)^{n-1}\xi\mu\lambda_3\tau\theta_2\lambda_1$, 
$\theta_3\pi_3=c(\tau\theta_2\lambda_1\pi_3\xi\mu\lambda_3)^{n-1}\tau\theta_2\lambda_1\pi_3\xi\mu$, 
$\newline$
$\mu\xi=d\kappa^{p-1}$, $\xi\kappa=c(\lambda_3\tau\theta_2\lambda_1\pi_3\xi\mu)^{n-1}\lambda_3\tau\theta_2\lambda_1\pi_3\xi$, 
$\kappa\mu=c(\mu\lambda_3\tau\theta_2\lambda_1\pi_3\xi)^{n-1}\mu\lambda_3\tau\theta_2\lambda_1\pi_3$, 

\item $\nu\delta=\beta\alpha+c(\theta_2\lambda_1\pi_3\xi\mu\lambda_3\tau)^{n-1}\theta_2\lambda_1\pi_3\xi\mu\lambda_3,$ $\newline$ 
$\delta\tau=a(\delta\theta_3\theta_1\phi\epsilon\psi\pi_1\pi_2\zeta\lambda_2\nu)^{m-1}\delta\theta_3\theta_1\phi\epsilon\psi\pi_1\pi_2\zeta\lambda_2$, $\newline$ 
$\tau\nu=a(\theta_3\theta_1\phi\epsilon\psi\pi_1\pi_2\zeta\lambda_2\nu\delta)^{m-1}\theta_3\theta_1\phi\epsilon\psi\pi_1\pi_2\zeta\lambda_2\nu$, $\newline$ 
$\alpha\tau=0$, $\tau\beta=0$, $\delta\tau\theta_2=0$, $\tau\theta_2\pi_2=0$, $\delta\theta_3\pi_3=0$,

\item $\phi\epsilon=\gamma\eta+a(\pi_1\pi_2\zeta\lambda_2\nu\delta\theta_3\theta_1\phi\epsilon\psi)^{m-1}\pi_1\pi_2\zeta\lambda_2\nu\delta\theta_3\theta_1\phi\epsilon$, $(\phi\epsilon\psi\pi_1\pi_2\zeta\lambda_2\nu\delta\theta_3\theta_1)^{m}\pi_1=0$, 

$\epsilon\psi=\rho\omega+a(\epsilon\psi\pi_1\pi_2\zeta\lambda_2\nu\delta\theta_3\theta_1\phi)^{m-1}
\epsilon\psi\pi_1\pi_2\zeta\lambda_2\nu\delta\theta_3\theta_1$, 

$\psi\phi=a(\psi\pi_1\pi_2\zeta\lambda_2\nu\delta\theta_3\theta_1\phi\epsilon)^{m-1}\psi\pi_1\pi_2\zeta\lambda_2\nu\delta\theta_3\theta_1\phi$, $\gamma\sigma=\phi\rho$, $\sigma\omega=\eta\psi$, 

$\omega\gamma=0$, $\omega\phi=0$, $\psi\gamma=0$, $\psi\pi_1\lambda_1=0$, 
$\phi\epsilon\psi\phi=0$, $\epsilon\psi\phi\epsilon=0$, $\psi\phi\epsilon\psi=0$,   

\item $\zeta^2\lambda_2=0$, $\pi_1\lambda_1\pi_3=0$, $\lambda_1\theta_1\phi=0$, $\theta_1\pi_1\pi_2=0$, 
$\pi_2\lambda_2\nu=0$, $\lambda_2\theta_2\lambda_1=0$, $\theta_2\pi_2\zeta=0$, 
$\pi_3\lambda_3\tau=0$, $\lambda_3\theta_3\theta_1=0$, $\theta_3\pi_3\xi=0$, 
$\xi\kappa^2=0$, $\kappa\mu\lambda_3=0$, and $\mu\xi\mu=0$ if $p\geqslant 3$, 

\item $\zeta\lambda_2\theta_2=0$, $\pi_1\pi_2\lambda_2=0$, $\lambda_1\pi_3\lambda_3=0$, 
$\pi_2\zeta^2=0$, $\theta_2\lambda_1\theta_1=0$, $\pi_3\xi\kappa=0$, $\lambda_3\tau\nu=0$, $\theta_3\theta_1\pi_1=0$, 
$\mu\lambda_3\theta_3=0$, $\kappa^2\mu=0$, and $\xi\mu\xi=0$ if $p\geqslant 3$.  
\end{enumerate} 
\end{exmp} 

\section{Basis and dimension}\label{sec:3+} 

This separate section is devoted to present precise formula for the dimension of a weighted generalized 
triangulation algebra. In particular, we will describe a canonical basis. We fix a weighted generalized 
triangulation algebra $\Lambda=\Lambda(Q,*,m_\bullet,c_\bullet,b_\bullet)$, where $Q=glue(B_1,\dots,B_r;\Theta)$ 
is a glueing of blocks $B_1,\dots,B_r$ with $s$ blocks of type IV and $t$ blocks of type V denoted as in 
the previous section. Let also $f$ and $g=\bar{f}$ be the permutations of arrows of the associated quiver $Q^*$. 
We assume from now on that $Q$ has at least two vertices. 

Our first aim is to describe the socles of the indecomposable projective modules in $\mod\Lambda$. Recall that 
for any $\eta\in Q_1^*$ we have the associated cycles $B_\eta$ and paths $A_\alpha$ along $g$-orbits of $\alpha$. 
We may extend this definition to some particular arrows outside of $Q^*$. Namely, for any $i\in\{1,\dots,s\}$ and 
$j\in\{1,\dots,t\}$, let $C^*_{\delta_i}$, $C^*_{\epsilon_j}$ and $C^*_{\psi_j}$ denote paths in $Q^*$ (along the 
$g$-cycles) satisfying the following conditions: 
$$\delta_i C^*_{\delta_i}=A_{\delta_i},\qquad\epsilon_j C^*_{\epsilon_j}=A_{\epsilon_j}\qquad\mbox{and}\qquad\psi_j C^*_{\psi_j}=A_{\psi_j}.$$ 
Then we may define the paths $A_{\alpha_i}=\alpha_i C^*_{\delta_i}$, $A_{\eta_j}=\eta_j C^*_{\epsilon_j}$ and 
$A_{\omega_j}=\omega_j C^*_{\psi_j}$, which naturally give the following ones 
$$B_{\alpha_i}=A_{\alpha_i}\beta_i,\quad B_{\eta_j}:=A_{\eta_j}\gamma_j\quad\mbox{and}\quad B_{\omega_j}=A_{\omega_j}\rho_j$$ 
in $Q$. Observe that $B_{\alpha_i}$ is a cycle in $Q$ of length $m_{\delta_i}n_{\delta_i}$ and both cycles $B_{\eta_j}$ and $B_{\omega_j}$ (in $Q$) are of length $m_{\psi_j}n_{\psi_j}$. Note that the definition of $B_{\alpha_i}$ is 
the same as for virtual mutations, where we consider only blocks of type IV. 

For an arrow $\eta\in Q_1^*$, we write 
$$\Delta_\eta=\eta f(\eta)f^2(\eta),\quad\zeta_\eta=\eta f(\eta)g(f(\eta))\mbox{ and }\xi_\eta=\eta g(\eta)f(g(\eta)).$$
Clearly $\Delta_\eta\bar{\eta}=\eta\zeta_{f(\eta)}$. We will also use the notation $\rho\equiv\rho'$ if the 
$\rho=\lambda\rho'$, for a nonzero scalar $\lambda\in K$. \smallskip 

Let us start with some preparatory lemmas. 

\begin{lem}\label{prepL1} Let $\eta\in Q_1^*$ be an arrow not lying in a block of type I and $\eta\neq\phi_j$, for 
$j\in\{1,\dots,t\}$. Then $c_\eta B_\eta=\Delta_\eta=c_{\bar{\eta}}B_{\bar{\eta}}$. 

Moreover, for any $j\in\{1,\dots,t\}$, we have 
$$c_{\phi_j}B_{\phi_j}=\phi_k\epsilon_j\psi_j-\gamma_j\sigma_j\omega_j=c_{\overline{\phi_j}}B_{\overline{\phi_j}},$$ 
and for a border loop $\eta$ the following holds $\Delta_\eta=c_{\bar{\eta}}B_{\bar{\eta}}+b_{s(\eta)}B_\eta\eta$. 
\end{lem} 

\begin{proof} Let first $\eta$ be an arrow in a block of types II or III. Then we deduce from Definition \ref{df:3.1}.2) 
that $\Delta_\eta=(\eta f(\eta))f^2(\eta)=c_{\bar{\eta}}A_{\bar{\eta}}g^{-1}(\bar{\eta})=c_{\bar{\eta}}B_{\bar{\eta}}$. 
Similarily, using the relations 2) for the arrow $f(\eta)$, we obtain 
$\Delta_\eta=\eta(f(\eta)f(f(\eta)))=\eta(c_{\overline{f(\eta)}}A_{\overline{f(\eta)}})=c_{g(\eta)}\eta A_{g(\eta)}=
c_\eta B_{\eta}$. \smallskip 

Next, applying Definition \ref{df:3.1}.3), we conclude that for any $i\in\{1,\dots,s\}$ arrows $\delta:=\delta_i$, 
$\nu:=\nu_i$ and $\tau:=\tau_i$ satisfy the following equalities: 
\begin{itemize}
\item $\Delta_\delta=(\delta\tau)\nu=(c_{\bar{\delta}}A_{\bar{\delta}})\nu=c_{\delta}B_{\delta}=c_{\bar{\delta}}B_{\bar{\delta}}$; 
\item $\Delta_\tau=(\tau\nu)\delta=c_{\bar{\tau}}A_{\bar{\tau}}g^{-1}(\bar{\tau})=c_{\bar{\tau}}B_{\bar{\tau}}= 
\tau(\nu\delta)=\tau(\beta\alpha+c_{\bar{\nu}}A_{\bar{\nu}})=c_{g(\tau)}\tau A_{g(\tau)}=c_{\tau}B_{\tau}$; 
\item $\Delta_\nu=(\nu\delta)\tau=(\beta\alpha+c_{\bar{\nu}}A_{\bar{\nu}})\tau=c_{\bar{\nu}}A_{\bar{\nu}}g^{-1}(\bar{\nu})=
c_{\bar{\nu}}B_{\bar{\nu}}=\nu(\delta\tau)=\nu(c_{\delta}A_{\delta})=c_{\nu}B_{\nu}$. 
\end{itemize} 
Therefore, the required equality holds also for $\eta$ lying in a block of type IV. 

Now, using the relations 4) in Definition \ref{df:3.1}, we obtain that for any $j\in\{1,\dots,t\}$, the 
following is satisfied ($\phi:=\phi_j$, $\epsilon:=\epsilon_j$ and $\psi:=\psi_j$): 
\begin{itemize}
\item $\Delta_\epsilon=\epsilon(\psi\phi)=\epsilon(c_{\bar{\psi}}A_{\bar{\psi}})=c_{\epsilon}B_{\epsilon}=
c_{\bar{\epsilon}}B_{\bar{\epsilon}}$; 
\item $\Delta_\psi=(\psi\phi)\epsilon=c_{\bar{\psi}}A_{\bar{\psi}}\epsilon=c_{\psi}B_{\psi}=c_{\bar{\psi}}B_{\bar{\psi}}$. 
\end{itemize} 
This shows the first part of the claim. \smallskip 

Eventually, the second part of the claim follows immediately from the relations 1) and 4) in Definition 
\ref{df:3.1}. \end{proof} 

The next technical result describes exceptional behaviour of paths $\xi_\eta$. 

\begin{lem}\label{prepL2} Let $\alpha\in Q_1^*$ be an arrow such that $g^2(\alpha)$ is different from $\delta_i$, 
$i\in\{1,\dots,s\}$, and from $\epsilon_j$ and $\psi_j$, $j\in\{1,\dots,t\}$. Then  
\begin{enumerate} 
\item[a)] $\xi_\alpha=\alpha f(\alpha)$, if $f(\alpha)$ is virtual. Moreover, then $g^{-1}(\alpha)\xi_\alpha=\xi_{g^{-1}(\alpha)}=0$.  
\item[b)] $\xi_\alpha=\bar{\alpha}f(\bar{\alpha})$, if $f^2(\alpha)$ is virtual with $(m_{f(\alpha)},n_{f(\alpha)})=(1,3)$. 
In this case, we have $g^{-2}(\alpha)g^{-1}(\alpha)\xi_\alpha=\xi_{g^{-2}(\alpha)}f(\bar{\alpha})=0$.  
\item[c)] $\xi_\alpha=c_{\alpha} B_\alpha$, if $\alpha=\tau_k$ with $(m_{\nu_k},n_{\nu_k})=(1,3)$ or $\alpha=\nu_k$. 
\item[d)] $\xi_\alpha= c_{\alpha} A_\alpha$, if $\alpha=\delta_k$ and $\tau_k$ is virtual.  
\item[e)] $\xi_\alpha= c_\alpha B_{\alpha}$, for $\alpha=\epsilon_j$. In this case $g^{-1}(\alpha)\xi_\alpha=0$. 
\item[f)] $\xi_\alpha= 0$, otherwise. 
\end{enumerate} \end{lem} 

\begin{proof} Assume first that $\alpha$ belongs to a block of types I-III. By the assumption $g(\alpha)\neq \nu_i$ 
and $g(\alpha)\neq \phi_j$ (and $\epsilon_j$), hence it follows from Definition \ref{df:3.1}.6) that $\xi_\alpha=0$ 
unless $f(\alpha)$ is virtual or $f^2(\alpha)$ is virtual with $(m_{f(\alpha)},n_{f(\alpha)})=(1,3)$. 

In the first case, we get 
$\xi_\alpha=\alpha (g(\alpha)f(g(\alpha)))=\alpha(c_{\overline{g(\alpha)}}A_{\overline{g(\alpha)}})=\alpha A_{f(\alpha)}=
\alpha f(\alpha)$ and $g^{-1}(\alpha)\xi_\alpha=\xi_{g^{-1}(\alpha)}=0$, by the relations 6), because 
$f(g^{-1}(\alpha))=\bar{\alpha}$ is not virtual and if $f^2(g^{-1}(\alpha))$ is virtual, then $\bar{\alpha}$ belongs to 
a $g$-orbit of length $n_{\bar{\alpha}}\geqslant 4$. This shows a). \smallskip 

In the second case, the $g$-cycle of $f(\alpha)$ has length $3$ and $g(f(\alpha))=f(\bar{\alpha})$, so 
$$\xi_\alpha=\alpha (g(\alpha)f(g(\alpha)))=\alpha(c_{\overline{g(\alpha)}}A_{\overline{g(\alpha)}})=
\alpha A_{f(\alpha)}=\alpha f(\alpha) g(f(\alpha))=\bar{\alpha}f(\bar{\alpha}).$$ 
Clearly, we have $f(g^{-1}(\alpha))=\bar{\alpha}$, and hence $g^{-2}(\alpha)g^{-1}(\alpha)\xi_{\alpha}=
\xi_{g^{-2}(\alpha)}f(\bar{\alpha})$. Moreover, $\xi_{g^{-2}(\alpha)}=0$, because both $f(\alpha)$ and 
$f^2(\alpha)$ are not virtual, and we are done in b). \smallskip 

Items c)-d) deal with the case when $\alpha$ is in a block of type IV. Here we obtain the required 
equalities using the relations 3):  
\begin{itemize}
\item if $\alpha=\tau_k$, then $\xi_\alpha=0$ except $(m_{\nu_k},n_{\nu_k})=(1,3)$, and in the case: 
$\xi_\alpha=\tau_k g(\tau_k)f(g(\tau_k))=\tau_k(\overline{\nu_k}f(\overline{\nu_k}))=\tau_k A_{\nu_k}=\tau_k\nu_k\delta_k= 
\Delta_{\tau_k}=c_{\tau_k} B_{\tau_k}$ (see Lemma \ref{prepL1}); 
\item if $\alpha=\delta_k$, then $\xi_\alpha=0$ except $\tau_k$ is virtual, and in the case: 
$\xi_\alpha=\delta_k \overline{\tau_k} f(\overline{\tau_k})=\delta_k\tau_k=c_{\delta_k}A_{\delta_k}$; 
\item for $\alpha=\nu_k$, we get $g(\alpha)=f(\alpha)=\delta_k$, so $\xi_\alpha=\nu_k\delta_k\tau_k=
\Delta_{\nu_k}=c_{\nu_k}B_{\nu_k}$, by Lemma \ref{prepL1}. 
\end{itemize} 

We proved above that for any arrow in a block of types I-IV, we have either $\xi_\alpha=0$, or $\alpha$ satisfies 
one of the conditions described in a)-d). Finally, it remains to see that for $\alpha$ lying in a block of type 
V, we must have $\alpha=\epsilon_j$ or $\psi_j$ (since by the assumption $g^2(\alpha)\neq \psi_j$, so $\alpha$ cannot 
be equal to $\phi_j$). If $\alpha=\epsilon_j$, then $\xi_{\epsilon_j}=\epsilon_j\psi_j\phi_j=\Delta_{\epsilon_j}=
c_{\epsilon_j}B_{\epsilon_j}$ and $g^{-1}(\alpha)\xi_\alpha=0$, as it is one of the zero relations in Definition 
\ref{df:3.1}.4). This completes the proof. \end{proof} 

The following proposition is the first key observation. 

\begin{prop}\label{prepP1} $B_\eta\bar{\eta}=0$, for any arrow $\eta\in Q_1^*$. \end{prop} 

\begin{proof} If $\eta=\phi_j$, then $B_\eta\bar{\eta}$ is one of the relations in Definition \ref{df:3.1}.4). 
If $\eta=\epsilon_j$ or $\psi_j$, then we conclude from Lemma \ref{prepL1} that $B_\eta\bar{\eta}\equiv\Delta_\eta\bar{\eta}$, 
and hence $B_\eta\bar{\eta}=0$, because $\Delta_\eta\bar{\eta}=\epsilon_j\psi_j\phi_j\epsilon_j$ or 
$\psi_j\phi_j\epsilon_j\psi_j$ is again one of the relations in Definition \ref{df:3.1}.4). Therefore, the claim 
holds for $\eta$ in a block $B_{s+j}$ of type V, $j\in\{1,\dots,t\}$. \smallskip 

Next, suppose that $\eta$ is an arrow in a block $B_i$, $i\in\{1,\dots,s\}$, of type IV. Then $B_\eta\equiv\Delta_\eta$ 
and $\Delta_\eta\bar{\eta}$ is equal to one of the following paths: $\nu\delta\tau\bar{\nu}$, $\delta\tau\nu\delta$ or 
$\tau\nu\delta\bar{\tau}$, where we abbreviate $\nu:=\nu_i$, $\delta:=\delta_i$ and $\tau:=\tau_i$. The first path 
is equal to $\nu\zeta_\delta$, and it vanishes in $\Lambda$, since $\zeta_\delta$ is one of the relations in 3) 
from Definition \ref{df:3.1}. Similarly, we deduce that $\delta\tau\nu\delta=
\delta\tau(\beta\alpha+c_{\bar{\nu}}A_{\bar{\nu}})=\delta\tau A_{g(\tau)}=\zeta_\delta g^2(\tau)\cdots =0$. For the 
third zero relation, it remains to prove that $B_\tau\bar{\tau}=0$. Indeed, $\bar{\tau}=f(g^{-1}(\tau))$, so 
$\bar{B}:=B_\tau\bar{\tau}=\tau g(\tau) \cdots g^{-3}(\tau)\xi_\alpha$, where $\alpha=g^{-2}(\tau)$. According to 
Lemma \ref{prepL2}, we conclude that either $\xi_\alpha=0$ or $\alpha$ satisfies one of the conditions a)-e). 

For a) or b), the path $\bar{B}$ admits a subpath $g^{-1}(\alpha)\xi_\alpha=0$ or $g^{-2}(\alpha)g^{-1}(\alpha)\xi_\alpha$, 
which are zero in $\Lambda$. The same works in case e). If $\alpha=\tau_k$ with $(m_{\nu_k},n_{\nu_k})=(1,3)$, then 
$\bar{B}$ is a path along $g$-orbit of $\alpha$ which contains a subpath of the form $B_{\delta_i}\delta_i\equiv 
\Delta_{\delta_i}\delta_i$, which is zero by previous considerations. If $\alpha=\nu_k$ or $\delta_k$ (with $\tau_k$ virtual), 
then $\bar{B}$ contains a subpath $B_{\delta_k}\delta_k=0$, and we are done in cases c) and d). \smallskip 

Assume now that $\eta$ is an arrow in a block of types II or III. Here we get $B_\eta\bar{\eta}\equiv\Delta_\eta\bar{\eta}=
\eta\zeta_{f(\eta)}$ and by Definition \ref{df:3.1}.5), we have $\zeta_{f(\eta)}=0$ in $\Lambda$ if and only if $f^2(f(\eta))
=\eta$ is not virtual and $f(\overline{f(\eta)})=f(g(\eta))$ is not virtual with $(m_{g(\eta)},n_{g(\eta)})=(1,3)$. 
If $\eta$ is virtual, then $B_\eta\bar{\eta}\equiv\Delta_\eta\bar{\eta}=\eta (f(\eta) f^2(\eta))\bar{\eta}=
\eta A_{g(\eta)} f(g(\eta))=\xi_\eta$, and $\xi_\eta=0$, by Definition \ref{df:3.1}.6), because $f(\eta)$ and $f^2(\eta)$ 
are not virtual. In case $f(g(\eta))$ is virtual with $A_{g(\eta)}=g(\eta)g^2(\eta)$ we obtain the following equalities: 
$$B_\eta\bar{\eta}\equiv \Delta_\eta\bar{\eta}=\eta (f(\eta)f^2(\eta))\bar{\eta}=\eta A_{g(\eta)} f(g^2(\eta))=
\eta g(\eta) A_{\overline{g^2(\eta)}}=\eta g(\eta) f(g(\eta)).$$ 
Therefore, again $B_\eta\bar{\eta}\equiv\xi_\eta$, which must be zero by Definition \ref{df:3.1}.6), since in this 
case both $f(\eta)$ and $f^2(\eta)$ are not virtual. \smallskip 

Finally, let $\eta$ be the loop from a block of type I. In particular, then $g(\eta)=\bar{\eta}$ so $\eta$ and 
$\bar{\eta}$ belong to one $g$-orbit. With notation $c:=c_\eta$ and $b=b_{s(\eta)}$, we deduce from Lemma \ref{prepL1} 
that $\Delta_\eta=c B_\eta + b B_\eta\eta$ and $B_{\bar{\eta}}\equiv \Delta_{\bar{\eta}} \equiv B_{\eta}$, since 
$\bar{\eta}$ does not lie in a block of type I (by the assumption, $Q$ has at least two vertices). Hence it 
follows from previous considerations that $B_\eta\eta\equiv B_{\bar{\eta}}\eta=0$, so in fact $\Delta_\eta\equiv B_\eta$. 
As a result, we conclude that $B_\eta\bar{\eta}\equiv \Delta_\eta\bar{\eta}=\eta^3\bar{\eta}=\eta\zeta_\eta$, thus 
$B_\eta\bar{\eta}=0$, because $\zeta_\eta$ is one of the relations in Definition \ref{df:3.1}.5). \end{proof} 

The following result provides the description of the socles of projective $\Lambda$-modules. 

\begin{coro}\label{lem:3.4}\begin{enumerate}[(1)] 
\item For any $\eta\in Q^*_1$ the cycle $B_{\eta}$ belongs to the right (left) socle of $\Lambda$. 
\item For any $i\in\{1,\dots,s\}$ the cycle $B_{\alpha_i}$ belongs to the right (left) socle of $\Lambda$. 
\item For $j\in\{1,\dots,t\}$, $B_{\eta_j}$ and $B_{\omega_j}$ belong to the right (left) socle of $\Lambda$. 
\end{enumerate}\end{coro} 

\begin{proof} Using Lemma \ref{prepL1} and Proposition \ref{prepP1} we easily deduce that also $B_\eta\eta=0$, 
for any arrow $\eta\in Q_1^*$, because $B_\eta \equiv B_{\bar{\eta}}$. Hence, we have $B_\eta\eta=B_{\eta}\bar{\eta}=0$ 
for any $\eta\in Q_1^*$. Now, it remains to see that 
$$B_{\nu_i}\beta_i=0, \mbox{ } B_{\phi_j}\gamma_j =0 \mbox{ and }B_{\epsilon_j}\rho_j=0,$$ 
for all $i\in\{1,\dots,s\}$ and $j\in\{1,\dots,t\}$. Clearly, $c_{\phi_j}B_{\phi_j}=
\phi_j\epsilon_j\psi_j-\gamma_j\sigma_j\omega_j$, so $B_{\phi_j}\gamma_j=0$, by Definition \ref{df:3.1}.4). Similarly, 
since $c_{\epsilon_j}B_{\epsilon_j}=\Delta_{\epsilon_j}$, we deduce from the relations \ref{df:3.1}.4) that 
$c_{\epsilon_j}B_{\epsilon_j}\rho_j=\epsilon_j\psi_j\phi_j\rho_j=\epsilon_j\psi_j\gamma_j\sigma_j=0$. Finally, 
applying the relations of type 3) in $\Lambda$, we obtain 
$c_{\delta_i}A_{\delta_i}\beta_i=c_{\bar{\delta_i}}A_{\bar{\delta_i}}\beta_i=\delta_i\tau_i\beta_i=0$, hence 
$A_{\delta_i}\beta_i=0$, for any $i\in\{1,\dots,s\}$. Therefore, $B_{\nu_i}\beta_i=\nu_i A_{\delta_i}\beta_i$ 
is zero in $\Lambda$, and we are done. Consequently, for a given $\eta\in Q_1^*$ we have $B_\eta\alpha=0$, for 
all arrows $\alpha\in Q_1$ starting at $s(B_\eta)=t(B_\eta)$. This shows that $B_\eta$ is an element of the right 
socle of $\Lambda$. \smallskip 

For (2), it is sufficient to see that $B_{\alpha_i}\alpha_i=0$, since $\alpha_i$ is the unique arrow starting at 
$c_i=s(B_{\alpha_i})$. Indeed, using the relations 3) in $\Lambda$, we get $c_{\delta_i}A_{\alpha_i}\nu_i=
\alpha_i(c_{\delta_i}A_{g(\delta_i)})=\alpha_i(c_{\delta_i}A_{\bar{\tau_i}})=\alpha_i\tau_i\nu_i=0$. Hence 
$A_{\alpha_i}\nu_i=0$, and consequently, we conclude from the relations 3) that 
$$B_{\alpha_i}\alpha_i=A_{\alpha_i}\beta_i\alpha_i=A_{\alpha_i}\nu_i\delta_i - c_{\bar{\nu_i}}A_{\alpha_i}A_{\bar{\nu_i}}=
- c_{\bar{\nu_i}}A_{\alpha_i}A_{\bar{\nu_i}}.$$  
But the path $A_{\alpha_i}A_{\bar{\nu_i}}$ is also zero in $\Lambda$, because it admits a subpath of the form 
$\rho=\alpha g(\alpha)f(g(\alpha))$, where $\alpha=g^{-2}(\nu_i)$, and it follows from Lemma \ref{prepL2} that 
$\rho=0$ or $A_{\alpha_i}A_{\bar{\nu_i}}$ contains a subpath of the form $\xi_\beta=0$ or $B_{\delta_k}\delta_k=0$. \smallskip 

Eventually, we will prove (3). First, observe that:  
$$(*)\qquad A_{\epsilon_j}\gamma_j=0,\, A_{\eta_j}\phi_j=0,\, A_{\omega_j}\epsilon_j=0,\mbox{ and } A_{\psi_j}\rho_j=0,$$ 
for $j\in\{1,\dots,t\}$. Indeed, we have 
$c_{\epsilon_j} A_{\epsilon_j}\gamma_j=c_{\bar{\epsilon_j}} A_{\bar{\epsilon_j}}\gamma_j=(\epsilon_j\psi_j-\rho_j\omega_j\gamma_j=0$, 
because $\psi_j\gamma_j=0$ and $\omega_j\gamma_j=0$ in $\Lambda$, by the relations 4). Similarly, we get 
$c_{\psi_j}A_{\psi_j}\rho_j=c_{\bar{\psi_j}}A_{\bar{\psi_j}}\rho_j=\psi_j\phi_j\rho_j=\psi_j\gamma_j\sigma_j=0$. 
Moreover, it is easy to see that   
$$c_{\epsilon_j} A_{\eta_j}\phi_j= \eta_j(c_{\epsilon_j}C^*_{\epsilon_j}\phi_j)=\eta_j(c_{\psi_j}A_{\psi_j} )= 
\eta_j\psi_j\phi_j=\sigma_j\omega_j\phi_j=0 $$ 
and 
$$c_{\epsilon_j} A_{\omega_j}\epsilon_j=\omega_j(c_{\epsilon_j}C^*_{\psi_j}\epsilon_j)=\omega_j(c_{g(\psi_j)}A_{g(\psi_j)})= 
\omega_j(c_{\bar{\phi_j}}A_{\bar{\phi_j}})=\omega_j(\phi_j\epsilon_j-\gamma_j\eta_j)=0,$$ 
so (*) is proved. Now, note that the relations 4) imply $B_{\eta_j}\eta_j=\eta_jC^*_{\epsilon_j}\gamma_j\eta_j=
\eta_jC^*_{\epsilon_j}\phi_j\epsilon_j-c_{\bar{\phi_j}}\eta_j C^*_{\epsilon_j}A_{\bar{\phi_j}}=
\eta_j B_{\psi_j}-c_{\bar{\phi_j}}\eta_j C^*_{\epsilon_j}A_{\bar{\phi_j}}$, so $B_{\eta_j}\eta_j=0$, because 
$\eta_jB_{\psi_j}=\eta_j\psi_j\phi_j\epsilon_j=\sigma_j\omega_j\phi_j\epsilon_j=0$ and $C^*_{\epsilon_j}A_{\bar{\phi_j}}$ admits 
a subpath of the form $\nabla_{\alpha}$, where $\alpha=g^{-2}(\phi_j)$; see Lemma \ref{prepL2}. On the other hand, 
$B_{\eta_j}\sigma_j=\eta_jC^*_{\epsilon_j}\gamma_j\sigma_j=A_{\eta_j}\phi_j\rho_j=0$, by $(*)$. Therefore, $B_{\eta_j}$ 
belongs to the right socle. Finally, we have $B_{\omega_j}\omega_j= A_{\omega_j}\rho_j\omega_j=
\omega_j B_{\bar{\phi_j}}-c_{\epsilon_j}A_{\omega_j}A_{\epsilon_j}$, which is zero in $\Lambda$, since $B_{\bar{\phi_j}}$ 
is in the socle and $A_{\omega_j}A_{\epsilon_j}$ admits an initial subpath of the form $\omega_j B_{\bar{\phi_j}}=0$. 
As a result, we conclude that $B_{\omega_j}$ belongs to the right socle of $\Lambda$. \smallskip 

Using the dual relations in the opposite algebra $\Lambda^{\op}$, one can directly deduce that $B_\eta$, for 
$\eta\in Q_1^*$ or $\eta=\alpha_i,\eta_j,\omega_j$, is also an element of the left socle of $\Lambda$. The proof 
is now complete. \end{proof}

Now, let us present some notation needed to describe bases of indecomposable projective modules over $\Lambda$. Let 
$\eta$ be an arrow in $Q_1$ which is different from the arrows $\beta_i$, for $i\in\{1,\dots,s\}$ and the arrows 
$\gamma_j,\rho_j,\sigma_j$, for $j\in\{1,\dots,t\}$. Then we write $\mathcal{B}_\eta$ for the set of all proper 
initial submonomials of $B_\eta$, and $\mathcal{B}_\eta^\nu$ for the set of all paths of the form $u\beta_i$ if 
$u\nu_i$ is a proper initial submonomial of $B_\eta$, for $i\in\{1,\dots,i\}$. Finally, we denote by $\mathcal{B}_\eta^\phi$ 
the set of all paths $u\gamma_j$, if $u\phi_j$ is a proper initial submonomial of $B_\eta$, and all paths $u\gamma_j\sigma_j$, 
if $u\phi_j$ is a proper initial submonomial of $B_\eta$, except $u\phi_j=A_{\psi_j}$ or $A_{\omega_j}$, for 
$j\in\{1,\dots,t\}$. We abbreviate $\tilde{\mathcal{B}}_\eta:=\mathcal{B}_\eta\cup\mathcal{B}_\eta^\nu\cup\mathcal{B}_\eta^\phi$. 

\begin{rem} We suggest to compare the exceptional conditions in definitions of $\mathcal{B}^\nu_\eta$ and 
$\mathcal{B}^\phi_\eta$ (i.e. cases when $u\beta_i$, $u\gamma_j$ or $u\gamma_j\sigma_j$ is not counted to the set) with 
equalities used in the above proved Corollary \ref{lem:3.4}, especially equalities presented in $(*)$. In each case the path 
is not counted, it is either zero or a socle element $B_\eta$. \end{rem} 

\begin{lem}\label{prepL3} Let $\zeta^k_\alpha=\alpha \beta g(\beta) \cdots g^k(\beta)$, where $\alpha\in Q_1^*$, 
$\beta=f(\alpha)$ and $k\geqslant 1$. Then $\zeta^k_\alpha=0$ or it is a combination of paths along $g$-orbits 
in $Q^*$. \end{lem} 

\begin{proof} For $k=1$ we have $\zeta^k_\alpha=\zeta_\alpha$. Assume first that $\alpha$ is in a block of type V. 
If $\alpha=\phi_j$ or $\epsilon_j$, then $\zeta_\alpha$ is itself a path along $g$-orbit of $\alpha$. For $\alpha=\psi_j$, 
we obtain $\zeta_\alpha=\zeta_{\psi_j}=(\psi_j\phi_j)\epsilon_j=c_{\psi_j}A_{\psi_j}\epsilon_j=c_{\psi_j}B_{\psi_j}$, by 
the relations 4) in Definition \ref{df:3.1}, so the claim holds also in this case. \smallskip 

Let now $\alpha$ be an arrow in a block of types I-IV. Then it follows from the relations 3) and 5) that either 
$\zeta_\alpha=0$ or $\alpha=\tau_i$ or $\nu_i$ or $\alpha$ belongs to a block of types I-III and then one of the 
following holds: 
\begin{enumerate}
\item[a)] $f^2(\alpha)$ is virtual; 
\item[b)] $f(\bar{\alpha})$ is virtual with $(m_{\bar{\alpha}},n_{\bar{\alpha}})=(1,3)$. 
\end{enumerate} 
For $\eta=\tau_i$, one gets $\zeta_\alpha=\tau_i\nu_i\delta_i=c_{\tau_i}B_{\tau_i}$, and for $\alpha=\nu_i$, 
we have $\zeta_\alpha=\nu_i\delta_i g(\delta_i)$, so in both cases $\zeta_\alpha$ is a path along a $g$-orbit of 
$\alpha$. Therefore, we may assume that $\alpha$ is in a block of types I-III. \smallskip 

In case a) we have $\zeta_\alpha=(\alpha f(\alpha))g(f(\alpha))=\bar{\alpha}f(\bar{\alpha})=c_\alpha A_\alpha$, 
whereas in b), one deduce that $\zeta_\alpha=(\alpha f(\alpha))g(f(\alpha))=\bar{\alpha}g(\bar{\alpha})f(g(\bar{\alpha}))=
\bar{\alpha} f(\bar{\alpha})=c_\alpha A_\alpha$. In both cases $\zeta_\alpha$ is equal to a path along $g$-orbit 
of $\alpha$, as required. \smallskip 

Now, suppose that $k\geqslant 1$ and consider $\zeta_\alpha^{k+1}=\zeta_\alpha g^2(\beta)\cdots g^{k+1}(\beta)$. 
If $\alpha$ belongs to a block of types IV or V, then $\zeta^{k+1}_\alpha=0$ or it is a path along $g$-orbit. 
For $\alpha$ in a block of types I-III, we conclude from the relations 5) that $\zeta_\alpha$ is either zero, 
or $\zeta^{k+1}_\alpha=\bar{\alpha}f(\bar{\alpha})g^2(\beta)\cdots g^{k+1}(\beta)$. If $f^2(\alpha)$ is virtual, 
then $g(f(\bar{\alpha}))=g^2(\beta)$, and in this case $\zeta^{k+1}_\alpha=\zeta^k_{\bar{\alpha}}$. In the 
case $f(\bar{\alpha})$ is virtual with $(m_{\bar{\alpha}},n_{\bar{\alpha}})=(1,3)$, we obtain that 
$\zeta^{k+1}_\alpha=c_\alpha A_\alpha g^2(\beta)\cdots g^{k+1}(\beta)=c_\alpha B_\alpha g^3(\beta)\cdots $, 
which is $c_\alpha B_\alpha$, for $k=2$, and zero, for $k\geqslant 3$ (use that $B_\alpha$ is in the right socle). 
Therefore, the claim follows from the induction on $k\geqslant 1$. \end{proof} 

\begin{coro}\label{prepCo} Every path in $Q^*$ is a combination of paths along $g$-orbits. \end{coro} 

\begin{proof} If $\rho$ is a combination of paths along $g$-orbits with common source and target, and $\eta\in Q_1^*$ is an 
arrow with $t(\alpha)=s(\rho)$, then $\alpha\rho$ is a combination of paths along $g$-orbits and paths of the form 
$\zeta^k_\alpha$, so by Lemma \ref{prepL3}, this is also a combination of paths along $g$-orbits. Hence the claim follows 
immediately from the induction on the length of a path. \end{proof}

\begin{prop}\label{prop:3.5} Let $\Lambda=\Lambda(Q,*,m_\bullet,c_\bullet,b_\bullet)$ be a weighted generalized 
triangulation algebra and $x$ a vertex of $Q$. Then the following statements hold. 
\begin{enumerate}[(1)] 
\item If $x$ is a starting vertex of two arrows $\eta,\bar{\eta}\in Q_1^*$ which are not virtual, then the module 
$e_x\Lambda$ has a basis of the form $\mathcal{B}_x=\tilde{\mathcal{B}}_\eta\cup\tilde{\mathcal{B}}_{\bar{\eta}}\cup\{e_x,B_\eta\}$.

\item If $x$ is a starting vertex of two arrows $\eta,\bar{\eta}\in Q_1^*$ with $\bar{\eta}$ virtual, then the 
module $e_x\Lambda$ has a basis of the form $\mathcal{B}_x=\tilde{\mathcal{B}}_\eta\cup\{e_x,B_\eta,\eta f(\eta)\}$. 

\item If $x=c_i$ or $d_i$, for $i\in\{1,\dots,s\}$, and $\eta$ is the unique arrow in $Q$ starting at $x$, then the 
module $e_{x}\Lambda$ has a basis $\mathcal{B}_x=\tilde{\mathcal{B}}_{\eta}\cup\{e_x,B_{\eta}\}$. 

\item  If $x=x_{1j}$ or $y_{1j}$, for $j\in\{1,\dots,t\}$, and $\eta$ is the unique arrow in $Q$ starting at $x$, 
then the module $e_{x}\Lambda$ has a basis $\mathcal{B}_x=\tilde{\mathcal{B}}_{\eta}\cup\{e_x,B_{\eta}\}$. 

\item If $x=x_{2j}$, for $j\in\{1,\dots,t\}$, then the module $e_{x}\Lambda$ has a basis of the form 
$\mathcal{B}_x=\tilde{\mathcal{B}}_{\eta_j}\cup\{e_x,B_{\eta_j},\sigma_j\}$.  

\item If $x=y_{2j}$, for $j\in\{1,\dots,t\}$, then the module $e_{x}\Lambda$ has a basis of the form 
$\mathcal{B}_x=\tilde{\mathcal{B}}_{\epsilon_j}\cup\{e_x,B_{\epsilon_j},\rho_j\}$. 
\end{enumerate}
\end{prop}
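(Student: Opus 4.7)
The plan is to establish each of the six cases by showing that the proposed set $\mathcal{B}_x$ both spans $e_x\Lambda$ and is linearly independent. The six cases differ only in how many outgoing arrows $x$ has and whether any of them is virtual or belongs to a block of type IV or V; the underlying reduction argument is uniform, so I would do case (1) in detail and then indicate the modifications for the remaining cases.

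For spanning, I would proceed by induction on the length of a path $p$ starting at $x$, with the base case $p=e_x$ being trivial. In the inductive step, the defining relations are used to rewrite $p$. Relations 2)--4) all have the shape ``two-arrow product = scalar $\cdot$ path along a $g$-cycle'' (plus possible detour corrections coming from $\nu_i\delta_i-\beta_i\alpha_i$ or $\phi_j\epsilon_j-\gamma_j\eta_j$), so any subpath that departs from the $g$-cycle can be pushed back onto a $g$-cycle at the price of increasing its length. The zero relations in 3), 4), 5), 6), together with the socle identities $B_\eta\bar\eta=0$, $B_{\alpha_i}\alpha_i=0$, $B_{\eta_j}\sigma_j=0$ and the equalities $(*)$ established in the proof of Lemma \ref{lem:3.4}, truncate any path that either crosses the socle element $B_\eta$ or takes a forbidden turn. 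What remains after all such reductions is precisely a proper initial submonomial of $B_\eta$ or $B_{\bar\eta}$ (the latter identified with a scalar multiple of the former via 1)--2)), a branch path ending in $\beta_i$, $\gamma_j$ or $\gamma_j\sigma_j$, or the socle element $B_\eta$ itself; these are exactly the elements of $\tilde{\mathcal{B}}_\eta\cup\tilde{\mathcal{B}}_{\bar\eta}\cup\{e_x,B_\eta\}$. The exceptional conditions in the definitions of $\mathcal{B}_\eta^\nu$ and $\mathcal{B}_\eta^\phi$ are, as indicated in the Remark, precisely those branch paths which either vanish or coincide with a listed socle generator.

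For linear independence, I would use the socle structure: by Lemma \ref{lem:3.4} the element $B_\eta$ is nonzero and lies in $\soc(\Lambda)$, while the remaining listed elements have pairwise distinct length-and-terminal-arrow data and therefore project to linearly independent classes in the successive radical quotients $\rad^k(\Lambda)/\rad^{k+1}(\Lambda)$. In the virtual cases (2), (5), (6) one additionally checks that the extra element $\eta f(\eta)$ (respectively $\sigma_j$, $\rho_j$) is independent of the $g$-cycle submonomials, because the virtual relation from 2) identifies $\eta f(\eta)$ only with $c_{\bar\eta}\bar\eta$ up to a nonzero scalar and introduces no new collapse.

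The main obstacle will be the bookkeeping in cases (5)--(6), where $x$ is a black vertex of a block of type V. The relations in 4) simultaneously involve the substitutions $\phi_j\epsilon_j=\gamma_j\eta_j+\cdots$ and $\epsilon_j\psi_j=\rho_j\omega_j+\cdots$, the zero relations $\omega_j\gamma_j=\omega_j\phi_j=\psi_j\gamma_j=0$, and the length-four relations $\phi_j\epsilon_j\psi_j\phi_j=\epsilon_j\psi_j\phi_j\epsilon_j=\psi_j\phi_j\epsilon_j\psi_j=0$. Verifying that every path from $x_{2j}$ or $y_{2j}$ reduces to an element of the listed basis without creating spurious new paths requires a careful case distinction matching each exclusion ``$u\phi_j=A_{\psi_j}$ or $A_{\omega_j}$'' in the definition of $\mathcal{B}_\eta^\phi$ to the specific cancellation provided by these relations or by the equalities $(*)$ in the proof of Lemma \ref{lem:3.4}. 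This combinatorial matching is the substance of the proof; the spanning and independence arguments themselves are routine once it is in place.
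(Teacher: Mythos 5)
Your spanning argument follows the paper's: induction on path length, using relations 2)--6) to push every path back onto a $g$-cycle (with the detour terms $\beta_i\alpha_i$, $\gamma_j\eta_j$, $\rho_j\omega_j$ producing the branch paths in $\mathcal{B}^\nu_\eta$ and $\mathcal{B}^\phi_\eta$) and the socle identities of Lemma \ref{lem:3.4} to truncate. The gap is in your linear independence argument. You claim the listed elements are independent because they ``have pairwise distinct length-and-terminal-arrow data and therefore project to linearly independent classes in the successive radical quotients $\rad^k(\Lambda)/\rad^{k+1}(\Lambda)$.'' This does not work: $\Lambda$ is not graded by path length, and the defining relations identify paths of different lengths (e.g.\ $\eta f(\eta)=c_{\bar\eta}A_{\bar\eta}$ equates a path of length $2$ with one of length $m_{\bar\eta}n_{\bar\eta}-1$), so a path of length $k$ may well lie in $\rad^{k+1}$ and knowing in which radical layer a given monomial sits is essentially equivalent to already having a basis. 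Worse, the real difficulty is precisely the one your argument skips: the branch paths $\theta_k=\eta\cdots g^{kn+p}(\eta)\beta_i$ and $\bar\theta_{k'}=\bar\eta\cdots g^{k'\bar n+\bar p}(\bar\eta)\beta_i$ (and their $\mathcal{B}^\phi$ analogues) all start at $x$ and end at the same vertex $c_i$ (resp.\ $x_{1j}$, $x_{2j}$), and nothing in ``distinct data'' rules out a linear dependence among them or with elements of $\mathcal{B}_\eta\cup\mathcal{B}_{\bar\eta}$.

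The paper closes exactly this point with a specific device that is absent from your proposal: given a putative vanishing combination of the branch paths, multiply it on the right by $\alpha_i$ (resp.\ by $\omega_j$, and then by $\sigma_j$ to reduce the $x_{2j}$ case to the $x_{1j}$ case). Using $\beta_i\alpha_i=\nu_i\delta_i-c_{\bar{\nu_i}}A_{\bar{\nu_i}}$ (resp.\ $\gamma_j\sigma_j\omega_j=\phi_j\rho_j\omega_j$) and checking that the correction terms $\theta_kA_{\bar{\nu_i}}$ vanish (via relations 6) or the socle property of $B_\eta$), each branch path is transported to a genuine path along the $g$-cycle of $\eta$ or $\bar\eta$, i.e.\ to a pairwise distinct element of $\mathcal{B}_\eta\cup\mathcal{B}_{\bar\eta}$, contradicting the independence of that set (which itself is proved using the socle element $B_\eta$ and relations 6), not by a length count). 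Without this step, or some substitute for it, your proof establishes only that $\mathcal{B}_x$ spans $e_x\Lambda$, not that it is a basis; and note that the independence question is the substance of the proposition, not the bookkeeping of the exclusions in $\mathcal{B}^\nu_\eta$ and $\mathcal{B}^\phi_\eta$, which mainly matter for the dimension count in Corollary \ref{co:3.6}.
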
 

\begin{proof} We denote by $\mathcal{B}_0$ the set of idempotents $\mathcal{B}_0=\{e_i\}_{i\in Q_0}$ of $\Lambda$, and put 
$$\mathcal{B}_1:=\{\sigma_1,\dots,\sigma_t,\rho_1,\dots,\rho_t\}\cup\{\eta f(\eta);\mbox{ }\bar{\eta}\mbox{: virtual}\}.$$ 
Moreover, we write $\mathcal{B}$ for the union of sets $\mathcal{B}_\eta$, together with all socle elements $B_\eta$, for 
$\eta\neq\beta_i,\gamma_j,\rho_j,\sigma_j$. Similarly, let $\mathcal{B}^\nu$ (respectively, $\mathcal{B}^\phi$) denote the union 
of sets $\mathcal{B}_\eta^\nu$ (respectively, $\mathcal{B}^\phi_\eta$), for all $\eta\in Q_1$ such that $\mathcal{B}_\eta$ 
is defined. We set $\widetilde{\mathcal{B}}=\mathcal{B}_0\cup\mathcal{B}_1\cup\mathcal{B}\cup\mathcal{B}^\nu\cup\mathcal{B}^\phi$. 
It is clear from the definition, that $\tilde{\mathcal{B}}$ is a disjoint union of sets $\mathcal{B}_x$, $x\in Q_0$, 
defined in (1)-(6). We will prove that $\tilde{\mathcal{B}}$ is a basis of $\Lambda$. \smallskip 

First, it follows from Corollary \ref{prepCo} that each path $\rho$ in $Q^*$ can be written as a combination of 
paths in $\mathcal{B}_0\cup\mathcal{B}$. If $\rho$ is a path in $Q$, which does not pass through vertices $x_{1j},x_{2j}$, 
$j\in\{1,\dots,t\}$, then it is either a path in $Q^*$, or it passes through vertices $c_i$, $i\in\{1,\dots,s\}$. 
Hence, by relations 3), one gets that $\rho$ is either a combination of paths in $Q^*$ or a combination of paths 
in $\mathcal{B}^\nu$. Eventually, if $\rho$ is a path in $Q$ passing through $x_{1j}$ or $x_{2j}$, then it follows 
from relations 4) that $\rho$ is a combination of paths in $\tilde{\mathcal{B}}$. Therefore, we conclude that 
$\tilde{\mathcal{B}}$ generates $\Lambda$. \smallskip 

It remains to show that each $\mathcal{B}_x$, for $x\in Q_0$, is linearly independent. It is clear that for any 
$\eta\in Q_1$, $\eta\neq\beta_i,\gamma_j,\sigma_j,\rho_j$, the set $\mathcal{B}_\eta$ is linearly independent, 
since $B_\eta$ belongs to the (right) socle of $\Lambda$. In a similar way, one can prove that the set 
$\mathcal{B}_\eta\cup\mathcal{B}_{\bar{\eta}}$ is linearly independent, if $\eta,\bar{\eta}\in Q^*_1$ are both 
not virtual. Indeed, suppose that 
$$\sum\lambda_k\rho_k+\sum\mu_k\bar{\rho}_k=0,\leqno{(\dagger)}$$ 
where $\rho_k$ and $\bar{\rho}_k$ are initial submonomials of $B_\eta$ 
and $B_{\bar{\eta}}$, respectively, and $\lambda_k,\mu_k\in K$ with at least one of $\lambda$'s and one of $\mu$'s 
non-zero. Moreover, we may take the paths in increasing order of length, and without loss of generality, we assume 
that the length of $\bar{\rho}_1$ is greater or equal to the length of $\rho_1$. Now, let $\rho$ be the unique 
initial submonomial of $B_\gamma$, $s(\gamma)=t(\rho_1)$, such that $\rho\rho_1=B_\gamma$. Then each $\rho\rho_i$, 
for $i\geqslant 2$, is a path along $g$-orbit of $\eta$ of length $>m_\eta n_\eta=m_\gamma n_\gamma$, so 
$\rho\rho_i=B_\gamma\gamma\cdots=0$, because $B_\gamma$ belongs to the (right) socle of $\Lambda$. Premultiplying 
the euality $(\dagger)$ by $\rho$, we obtain the following: 
$$\lambda_1B_\gamma=-\sum\mu_i\rho\bar{\rho}_k.$$ 
It is now sufficient to see that each $\tilde{\rho}_k=\rho\bar{\rho}_k$ is either zero it is congruent 
with an initial subpath of $B_\gamma$. Indeed, observe that $\tilde{\rho}_k= 
\gamma\cdots\xi_{g^{-1}(\alpha)}g(\beta)\cdots g^l(\beta)$, where $l\geqslant 1$, $\beta=f(\alpha)=\bar{\eta}$ 
is the first arrow on $\bar{\rho}_k$ and $\alpha=f^{-1}(\bar{\eta})=g^{-1}(\eta)$ is the last arrow on $\rho$. 
If $\gamma=g^{-p}(\alpha)$ with $p\geqslant 2$, then applying Lemma \ref{prepL2}, we conclude that either 
$\tilde{\rho}_k=0$ or $\tilde{\rho}_k$ contains an initial subpath of the form $B_\gamma\gamma$, which is also 
zero, because $B_\gamma$ is in the socle. If $p=1$, then $\gamma=g^{-1}(\alpha)$, hence using Lemma \ref{prepL2} 
again, we infer that either $\xi_\gamma=0$ or $\xi_\gamma\equiv B_\gamma$ or $A_{\gamma=\delta_i}$, or 
$\xi_{\gamma}g(\beta)=\Delta_\gamma\equiv B_{\gamma}$. In each case, we get $\tilde{\rho}_k=0$ or it contains 
an initial subpath of the form $B_\gamma$ or of the form $A_{\delta_k}\bar{\nu}_k=0$ (see also the proof 
of Corollary \ref{lem:3.4}). In a similar way, for $p=0$, one can use Definition \ref{df:3.1}.5), and deduce 
that $\tilde{\rho}_i=\zeta_\gamma^k=0$ or it is congruent with an initial subpath of $B_\gamma$. As a result, 
one concludes that $B_\gamma=0$ or it is a combination of initial subpaths of $B_\gamma$, a contradiction 
(since $\mathcal{B}_\gamma\cup\{B_\gamma\}$ linearly independet). \smallskip

Hence it is sufficient to prove that for any $x,y\in Q_0$, the sets $e_x\mathcal{B}^\nu e_y$ and 
$e_x\mathcal{B}^\phi e_y$ are linearly independent. 

Let $x\in Q_0$ and denote by $\eta$ and $\bar{\eta}$ (possibly $\eta=\bar{\eta}$) the arrows in $Q$ such that 
$\mathcal{B}_x=\widetilde{\mathcal{B}_\eta}\cup\widetilde{\mathcal{B}_{\bar{\eta}}}\cup\{e_x,B_\eta\}\cup P$, 
where $P=\{\pi\}$ with $\pi\in\mathcal{B}_1$ or $P=\emptyset$. Observe that, if $e_x\mathcal{B}^\nu e_y$ is non-empty, 
then $y=c_i$, for some $i\in\{1,\dots,s\}$, and moreover, at least one of $B_\eta$ or $B_{\bar{\eta}}$ contains 
$\nu_i$. Without loss of generality, we may assume that both. We write $n,\bar{n},m$ and $\bar{m}$ for 
$n_\eta,n_{\bar{\eta}},m_\eta$ and $m_{\bar{\eta}}$, respectively. Then there are unique $p\in\{0,\dots,n-1\}$ 
and $\bar{p}\in\{0,\dots,\bar{n}-1\}$ such that $g^{p+1}(\eta)=\nu_i$ and $g^{\bar{p}+1}(\bar{\eta})=\nu_i$, 
and consequently, we obtain that $e_x\mathcal{B}^\nu e_y$ consists of paths of the form 
$\theta_k=\eta g(\eta)\cdots g^{kn+p}(\eta)\beta_i$, for $k\in\{0,\dots,m-1\}$, and paths of the form 
$\bar{\theta}_k=\bar{\eta} g(\bar{\eta})\cdots g^{k\bar{n}+\bar{p}}(\bar{\eta})\beta_i$, for 
$k\in\{0,\dots,\bar{m}-1\}$. Now, it follows that $e_x\mathcal{B}^\nu e_y$ is linearly independent. Indeed, 
suppose that $\sum_{k=0}^{m-1} \lambda_k\theta_k+\sum_{k=0}^{\bar{m}-1}\mu_k\bar{\theta}_k=0$, for some 
$\lambda_k,\mu_k\in K$, at least one non-zero. Then 
$$(*)\qquad\sum_{k=0}^{m-1} \lambda_k\theta_k\alpha_i+\sum_{k=0}^{\bar{m}-1}\mu_k\bar{\theta}_k\alpha_i=0,$$ 
and for any $k$, we get $\theta_k\alpha_i=\eta\cdots g^{kn+p}(\eta)\beta_i\alpha_i=
\eta g(\eta)\cdots g^{kn+p}(\eta)\nu_i\delta_i$, because 
$\beta_i\alpha_i=\nu_i\delta_i-c_{\bar{\nu_i}}A_{\bar{\nu_i}}$, by relations 3), and $\theta_kA_{\bar{\nu_i}}$ 
contains a subpath of the form $\xi_\beta$ with $\beta=g^{kn+p-1}(\eta)=g^{-2}(\nu_i)$ or it is of the form 
$\zeta_\eta$. Similarly as above, it follows from Lemma \ref{prepL2} (and its dual), that always 
$\theta_k A_{\bar{\nu_i}}=0$. The same works for $\bar{\theta}_k\alpha_i=
\bar{\eta}\cdots g^{k\bar{n}+\bar{p}}(\bar{\eta})\nu_i\delta_i$. As a result, $(*)$ contradicts linear 
independence of $\mathcal{B}_\eta\cup\mathcal{B}_{\bar{\eta}}$, because then 
$$\theta_k\alpha_i=\eta g(\eta) \cdots g^{kn+p+2}(\eta)\in\mathcal{B}_\eta\mbox{ and }
\bar{\theta}_k\alpha_i=\bar{\eta}\cdots g^{k\bar{n}+\bar{p}+2}(\bar{\eta}) \in\mathcal{B}_{\bar{\eta}}.$$ 
This proves linear independence of $e_x\mathcal{B}^\nu e_y$, for all $y\in Q_0$. 

Similarly, if $e_x\mathcal{B}^\phi e_y\neq\emptyset$, then $y=x_{1j}$ or $x_{2j}$, and the corresponding 
sets are independent in each case. For $y=x_{1j}$, as above basis of $e_x\mathcal{B}^\phi e_y$ consists of 
paths $\theta_k=\eta \cdots g^{kn+p}(\eta)\gamma_j\sigma_j$, $k\in\{0,\dots,m-1\}$ (and possibly analogous 
paths $\bar{\theta}_k$, $k\in\{0,\dots,\bar{m}-1\}$), and 
$$\theta_k\omega_j=\eta\cdots g^{kn+p}(\eta)\phi_j\epsilon_j\psi_j,$$ 
because $\gamma_j\sigma_j\omega_j=\phi_j\rho_j\omega_j=\phi_j\epsilon_j\psi_j-c(\phi_jA_{\epsilon_j})$, by 
the relations 3), and $\eta\cdots g^{kn+p}(\eta)\phi_jA_{\epsilon_j}=\eta\cdots g^{kn+p}(\eta)B_{\phi_j}=0$, 
since it is a path along $g$-cycle of $\eta$ of length $> mn$ ($m=m_{\eta}=m_{\phi_j}$) and $B_{\phi_j}$ is 
in the socle of $\Lambda$. Consequently, we obtain that $\theta_k\omega_j=\eta g(\eta)\cdots g^{kn+p+3}(\eta)$ 
is a path in $\mathcal{B}_\eta$ or $0$ (the same for $\bar{\theta}_k\omega_j$), and hence, any linear combination 
of paths in $e_x\mathcal{B}^\phi e_{x_{1j}}$ multiplied by $\omega_j$ (from the right side) gives rise to a 
combination of (pairwise distinct) paths in $\mathcal{B}_\eta\cup\mathcal{B}_{\bar{\eta}}$. Thus we are done 
in case $y=x_{1j}$. Case $y=x_{2j}$ follows immediately from the case $y=x_{1j}$, if we multiply aprropriate 
combinations by $\sigma_j$ from the right. 

Summing up, we proved that for any $x\in Q_0$, the sets $\mathcal{B}_\eta$, $\mathcal{B}^\nu_\eta$ and $\mathcal{B}^\phi_\eta$ are 
linearly independent, so their union $\widetilde{\mathcal{B}}_\eta$ is also linearly independent, because paths in these sets are 
ending at different sets of vertices. As a result, we conclude that $\mathcal{B}_x$ is linearly independent for all $x\in Q_0$, 
and therefore, $\widetilde{\mathcal{B}}$ is indeed a basis of $\Lambda$. This completes the proof. \end{proof} 

For an arrow $\eta\in Q_1^*$, let 
$$n^\nu_\eta:=|\{i\in\{1,\dots,s\}:\mbox{ }\mathcal{O}(\eta)=\mathcal{O}(\nu_i)\}|\mbox{ and }
n^\phi_\eta:=|\{j\in\{1,\dots,t\}:\mbox{ } \mathcal{O}(\eta)=\mathcal{O}(\phi_j)\}|.$$  

\begin{coro}\label{co:3.6} Let $\Lambda=\Lambda(Q,*,m_\bullet,c_\bullet,b_\bullet)$ be a weighted generalized 
triangulation algebra and $x$ a vertex of $Q$. 
\begin{enumerate}[(1)] 
\item If $x$ is a starting vertex of two arrows $\eta,\bar{\eta}\in Q^*_1$, then 
$$|\mathcal{B}_x|=m_\eta(n_\eta+n_\eta^\nu+2n_\eta^\phi)+ m_{\bar{\eta}}(n_{\bar{\eta}}+n_{\bar{\eta}}^\nu+2n_{\bar{\eta}}^\phi).$$

\item If $x=c_i$ or $d_i$, for $i\in\{1,\dots,s\}$, then 
$|\mathcal{B}_x|=m_{\delta_i}(n_{\delta_i}+n_{\delta_i}^\nu+2n_{\delta_i}^\phi)$. 

\item If $x=y_{1j}$ or $y_{2j}$, for $j\in\{1,\dots,t\}$, then 
$|\mathcal{B}_{x}|=m_{\psi_j}(n_{\psi_j}+n_{\psi_j}^\nu+2n_{\psi_j}^\phi)$. 

\item If $x=x_{1j}$ or $x_{2j}$, for $j\in\{1,\dots,t\}$, then 
$|\mathcal{B}_{x}|=m_{\psi_j}(n_{\psi_j}+n_{\psi_j}^\nu+2n_{\psi_j}^\phi)$. \end{enumerate} \end{coro}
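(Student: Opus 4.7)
The plan is a direct enumeration of the bases supplied by Proposition~\ref{prop:3.5}. In each of the six cases the basis $\mathcal{B}_x$ is written there as an essentially disjoint union of sets $\tilde{\mathcal{B}}_\eta=\mathcal{B}_\eta\cup\mathcal{B}_\eta^\nu\cup\mathcal{B}_\eta^\phi$ for one or two arrows $\eta$, together with a short list of extras from $\{e_x,B_\eta,\eta f(\eta),\sigma_j,\rho_j\}$. Disjointness of the $\tilde{\mathcal{B}}_\eta$ for different $\eta$ (up to the single shared idempotent $e_x$) is immediate since proper prefixes of $B_\eta$ and $B_{\bar{\eta}}$ are distinguished by their first arrow.

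The main computation is therefore $|\tilde{\mathcal{B}}_\eta|$. Viewing $B_\eta$ as the cyclic concatenation going $m_\eta$ times around the $g$-orbit of $\eta$ (of length $n_\eta$), the set $\mathcal{B}_\eta$ supplies one element per position along the cycle, giving $m_\eta n_\eta$ elements (with the convention that $B_\eta$ itself is adjoined separately). For $\mathcal{B}_\eta^\nu$, each of the $n_\eta^\nu$ indices $i$ with $\mathcal{O}(\nu_i)=\mathcal{O}(\eta)$ contributes exactly $m_\eta$ paths $u\beta_i$, one per lap through the orbit, for a total of $m_\eta n_\eta^\nu$. An analogous count for $\mathcal{B}_\eta^\phi$ yields $m_\eta n_\eta^\phi$ paths of the form $u\gamma_j$ and another $m_\eta n_\eta^\phi$ of the form $u\gamma_j\sigma_j$, modulo the exclusions $u\phi_j\in\{A_{\psi_j},A_{\omega_j}\}$ in the definition. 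Combining, $|\tilde{\mathcal{B}}_\eta|=m_\eta(n_\eta+n_\eta^\nu+2n_\eta^\phi)$, whereupon the six stated formulas follow by summing over the one or two arrows involved, subtracting shared elements such as $e_x$, and adding the remaining fresh adjoined elements; for instance in case~(2) the adjoined $\eta f(\eta)$ lies outside $\tilde{\mathcal{B}}_\eta$ precisely because $\bar{\eta}$ is virtual, and in cases~(5)--(6) the length-one arrow $\sigma_j$ or $\rho_j$ is excluded from $\tilde{\mathcal{B}}_{\eta_j}$ or $\tilde{\mathcal{B}}_{\epsilon_j}$ by inspection of sources and targets.

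The main obstacle I expect is the careful bookkeeping around the exceptional clauses in the definition of $\mathcal{B}_\eta^\phi$: one has to verify that the excluded paths (those with $u\phi_j\in\{A_{\psi_j},A_{\omega_j}\}$) exactly compensate for the overlaps among $\tilde{\mathcal{B}}_\eta$, $\tilde{\mathcal{B}}_{\bar{\eta}}$, and the adjoined elements, so that the final count yields the clean closed-form expressions of the corollary. As pointed out in the remark preceding Proposition~\ref{prop:3.5}, each such excluded path is either zero or a socle element already accounted for elsewhere in the basis, so this cancellation amounts to a consistency check rather than a new obstruction; a case-by-case tracking around blocks of types~IV and~V, following the positions of $\phi_j$, $\psi_j$, $\omega_j$ along the relevant $g$-orbits, will complete the proof.
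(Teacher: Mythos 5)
Your strategy is the same as the paper's (read $\mathcal{B}_x$ off Proposition \ref{prop:3.5} and count $\tilde{\mathcal{B}}_\eta$ termwise), but the proposal stops exactly where the content of the corollary lies. The generic counts you assert, $|\mathcal{B}_\eta^\nu|=m_\eta n_\eta^\nu$ ("each index contributes exactly $m_\eta$ paths, one per lap") and $|\mathcal{B}_\eta^\phi|=2m_\eta n_\eta^\phi$, are correct only in case (1); in cases (2)--(4) they acquire corrections of $-1$ or $-2$, and checking that these corrections exactly offset the varying number of adjoined elements ($2$ versus $3$) \emph{is} the proof. You flag the $\phi$-exclusions and defer them as a "consistency check", but you never identify the correction needed for $\mathcal{B}^\nu$: at $x=c_i$ or $d_i$ one has $|\mathcal{B}^\nu_{\delta_i}|=m_{\delta_i}n^\nu_{\delta_i}-1$, because the last occurrence of $\nu_i$ along $B_{\delta_i}$ gives $u\nu_i=A_{\delta_i}\nu_i=B_{\delta_i}$, which is not a \emph{proper} initial submonomial (and indeed $A_{\delta_i}\beta_i=0$). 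Without this $-1$ your count at $c_i,d_i$ comes out one too large, since $|\mathcal{B}_{\delta_i}|=m_{\delta_i}n_{\delta_i}-1$ and only two elements are adjoined there; so your per-lap claim genuinely fails in case (2), and nothing in the proposal repairs it.

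Similarly, in cases (3)--(4) the relevant bookkeeping is not quite the clause you cite: at $y_{1j}$ and $x_{1j}$ exactly one path drops out (the one with $u\phi_j=A_{\psi_j}$, respectively $A_{\omega_j}$, which is the exceptional clause), matching the two adjoined elements, but at $y_{2j}$ and $x_{2j}$ two paths drop out for different reasons ($u\phi_k=B_{\epsilon_j}$ is not proper; $A_{\eta_j}\phi_j$ is not even a submonomial of $B_{\eta_j}=A_{\eta_j}\gamma_j$), matching the three adjoined elements $e_x$, $B$, and $\rho_j$ or $\sigma_j$. You also leave unverified the virtual subcase of (1), where Proposition \ref{prop:3.5}(2) adjoins three elements and the second summand of the formula degenerates to $m_{\bar{\eta}}n_{\bar{\eta}}=2$ with $n^\nu_{\bar{\eta}}=n^\phi_{\bar{\eta}}=0$. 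None of these checks is hard, but together they constitute the paper's proof; as written, your argument establishes only case (1) for non-virtual $\eta,\bar{\eta}$ and postpones the rest.
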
 

\begin{proof} For (1) we will consider two cases. First assume that $x$ is a starting vertex of two arrows 
$\eta,\bar{\eta}\in Q^*_1$, both not virtual. Then it follows from Proposition \ref{prop:3.5}(1) that 
$|\mathcal{B}_x|=|\tilde{\mathcal{B}}_\eta|+|\tilde{\mathcal{B}}_{\bar{\eta}}|+2$. Moreover, we have 
$|\mathcal{B}_\eta|=m_\eta n_\eta - 1$, and neither $x=d_i$ nor $x= y_{1j},y_{2j}$ or $x_{1j},x_{2j}$, so 
$|\mathcal{B}_\eta^\nu|=m_\eta n_\eta^\nu$ and $|\mathcal{B}_\eta^\phi|=2m_\eta n_\eta^\phi$. The sets 
$\mathcal{B}_\eta$, $\mathcal{B}_\eta^\nu$ and $\mathcal{B}_\eta^\phi$ are disjoint, and consequently, 
$|\tilde{\mathcal{B}}_\eta|=m_\eta(n_\eta+n_\eta^\nu+2n_\eta^\phi)-1$. We have similar formula for 
$|\tilde{\mathcal{B}}_{\bar{\eta}}|$, and hence the required equality holds. Now, let $\bar{\eta}$ be a 
virtual arrow of $Q^*$. In the case, using Proposition \ref{prop:3.5}(2) we may conclude that 
$|\mathcal{B}_x|=|\tilde{\mathcal{B}}_\eta|+3=m_\eta(n_\eta+n_\eta^\nu + 2n_\eta^\phi)+2$, so the required \
equality also holds, because for a virtual arrow $\bar{\eta}$ we have $m_{\bar{\eta}}n_{\bar{\eta}}=2$, while 
$n_{\bar{\eta}}^\nu= n_{\bar{\eta}}^\phi=0$. 

Next, it follows from (3) in Proposition \ref{prop:3.5} that, for any $i\in\{1,\dots,s\}$, we have the equalities 
$$|\mathcal{B}_{c_i}|=|\tilde{\mathcal{B}}_{\alpha_i}|+2=|\tilde{\mathcal{B}}_{\delta_i}|+2=|\mathcal{B}_{d_i}|,$$ 
due to the definition of the cycles $B_{\alpha_i}$, $i\in\{1,\dots,s\}$. Clearly, 
$|\mathcal{B}_{\delta_i}|=m_{\delta_i}n_{\delta_i}-1$, whereas $|\mathcal{B}_{\delta_i}^\nu|=
m_{\delta_i}n_{\delta_i}^\nu-1$, because $A_{\delta_i}\nu_i=B_{\delta_i}$ is not a proper submonomial of 
$B_{\delta_i}$ (in fact, $A_{\delta_i}\beta_i=0$). Finally, we have $|\mathcal{B}_{\delta_i}^\phi|=
2m_{\delta_i}n_{\delta_i}^\phi$, hence indeed, we obtain that $|\mathcal{B}_{c_i}|=|\mathcal{B}_{d_i}|=
m_{\delta_i}(n_{\delta_i}+n_{\delta_i}^\nu+2n_{\delta_i}^\phi)$, and (2) follows. 

Now, let $j\in\{1,\dots,t\}$. By Proposition \ref{prop:3.5}(4), we get that $|\mathcal{B}_{y_{1j}}|=
|\tilde{\mathcal{B}}_{\psi_j}|+2$. Simple analysis shows that $|\mathcal{B}_{\psi_j}|=m_{\psi_j}n_{\psi_j}-1$, 
$|\mathcal{B}_{\psi_j}^\nu|=m_{\psi_j}n_{\psi_j}^\nu$ and $|\mathcal{B}_{\psi_j}^\phi|=2m_{\psi_j}n_{\psi_j}^\phi-1$, 
because $A_{\psi_j}'\phi_j=A_{\psi_j}$. Hence, we obtain the required formula for $|\mathcal{B}_{y_{1j}}|$. 
By Proposition \ref{prop:3.5}(6), $|\mathcal{B}_{y_{2j}}|=|\tilde{\mathcal{B}}_{\epsilon_j}|+3$ and in this 
case, we have $|\mathcal{B}_{\epsilon_j}|=m_{\epsilon_j}n_{\epsilon_j}-1$, $|\mathcal{B}_{\epsilon_j}^\nu|=
m_{\epsilon_j}n_{\epsilon_j}^\nu$ and $|\mathcal{B}_{\epsilon_j}^\phi|=2(m_{\epsilon_j}n_{\epsilon_j}^\phi-1)$, 
since we count all paths of the form $u\gamma_k$ and all paths of the form $u\gamma_k\sigma_k$, except 
$u=A_{\epsilon_j}$ (then $u\phi_k=B_{\epsilon_j}$ is not proper). This proves (3). 

Finally, consider $j\in\{1,\dots,t\}$. Applying Proposition \ref{prop:3.5}(4) and (5), we obtain that 
$|\mathcal{B}_{x_{1j}}|=|\tilde{\mathcal{B}}_{\omega_j}|+2$ and $|\mathcal{B}_{x_{2j}}|=
|\tilde{\mathcal{B}}_{\eta_j}|+3$. Since $B_{\omega_j}$ is a cycle of length $m_{\psi_j}n_{\psi_j}$, we 
have $|\mathcal{B}_{\omega_j}|=m_{\psi_j}n_{\psi_j}-1$ and $|\mathcal{B}_{\omega_j}^\nu|=m_{\psi_j}n_{\psi_j}^\nu$. 
Moreover, $|\mathcal{B}_{\omega_j}^\phi|=2m_{\psi_j}n_{\psi_j}^\phi - 1$, because we do not count the path 
$u\gamma_j\sigma_j$, for which $u\phi_j=A_{\omega_j}$. This proves the formula for $|\mathcal{B}_{x_{1j}}|$. 
Similarly, we have $|\mathcal{B}_{\eta_j}|=m_{\psi_j}n_{\psi_j}-1$ and $|\mathcal{B}_{\eta_j}^\nu|=
m_{\psi_j}n_{\psi_j}^\nu$, while $|\mathcal{B}_{\eta_j}^\phi|=2(m_{\psi_j}n_{\psi_j}^\phi - 1)$, because 
$A_{\eta_j}\phi_j$ is even not a submonomial of $B_{\eta_j}$, which also gives the required equality, 
and the proof is now finished. \end{proof} 

As a result we obtain the following theorem. 

\begin{theorem}\label{thm:3.7} Let $\Lambda=\Lambda(Q,*,m_\bullet,c_\bullet,b_\bullet)$ be a weighted 
generalized triangulation algebra. 
Then 
$$\dim_K\Lambda=\sum_{\eta\in Q_1^*}m_\eta(n_\eta+n_\eta^\nu+2n_\eta^\phi) + 
\sum_{i=1}^s m_{\delta_i}(n_{\delta_i}+n_{\delta_i}^\nu+2n_{\delta_i}^\phi) + $$
$$+\sum_{j=1}^t 2m_{\psi_j}(n_{\psi_j}+n_{\psi_j}^\nu+2n_{\psi_j}^\phi).$$ 
\end{theorem}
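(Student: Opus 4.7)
The plan is to write $\dim_K\Lambda=\sum_{x\in Q_0}\dim_K e_x\Lambda=\sum_{x\in Q_0}|\mathcal{B}_x|$, using the bases $\mathcal{B}_x$ of the indecomposable projectives $e_x\Lambda$ furnished by Proposition \ref{prop:3.5}, and then invoke the cardinality formulas of Corollary \ref{co:3.6}. The vertices of $Q$ split into four disjoint classes that together exhaust $Q_0$: (a) the sources of two arrows in $Q^*$, which are precisely the outlets and the black vertices in blocks of type III (i.e. the case covered by Corollary \ref{co:3.6}(1)); (b) the vertices $c_i,d_i$ for $i\in\{1,\dots,s\}$; (c) the vertices $y_{1j},y_{2j}$ for $j\in\{1,\dots,t\}$; (d) the vertices $x_{1j},x_{2j}$ for $j\in\{1,\dots,t\}$.

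I would first process class (a): since each arrow of $Q^*_1$ has a unique source, summing Corollary \ref{co:3.6}(1) over $x$ in class (a) yields $\sum_\alpha m_\alpha(n_\alpha+n_\alpha^\nu+2n_\alpha^\phi)$, where $\alpha$ runs over those arrows of $Q^*_1$ whose source lies in class (a). The only arrows in $Q^*_1$ missing from this sum are those starting at the vertices from classes (b)--(d) that survive in $Q^*$, namely $\delta_i$ (starting at $d_i$), $\psi_j$ (starting at $y_{1j}$) and $\epsilon_j$ (starting at $y_{2j}$).

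I would then account for the remaining classes using the other parts of the corollary. Class (b) contributes $2\sum_{i=1}^s m_{\delta_i}(n_{\delta_i}+n_{\delta_i}^\nu+2n_{\delta_i}^\phi)$; one copy completes the first sum by supplying the missing term for $\delta_i$, while the second copy gives the middle summand of the theorem. For classes (c) and (d), I would use that $\phi_j$, $\epsilon_j$ and $\psi_j$ form an $f$-orbit (the unmarked triangle of the $j$-th block of type V), so they lie in a common $g$-orbit; consequently $m_{\psi_j}=m_{\epsilon_j}=m_{\phi_j}$ and the parenthesized factors for these three arrows coincide. Then class (c) delivers $2\sum_{j=1}^t m_{\psi_j}(n_{\psi_j}+n_{\psi_j}^\nu+2n_{\psi_j}^\phi)$, which fills in the missing contributions for $\psi_j$ and $\epsilon_j$ in the first sum; class (d) delivers another $2\sum_{j=1}^t m_{\psi_j}(\cdots)$, which is the third summand of the theorem.

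Adding the contributions from all four classes yields exactly the stated formula. No step presents a genuine obstacle: the heavy lifting has already been done in Lemma \ref{lem:3.4} (locating socle cycles) and Proposition \ref{prop:3.5} (building the bases $\mathcal{B}_x$), and in Corollary \ref{co:3.6} (computing their sizes). The remaining work is purely combinatorial bookkeeping, the only mildly delicate point being the use of the shared $g$-orbit of $\phi_j,\epsilon_j,\psi_j$ to match coefficients when combining the contributions of class (c) with the missing $\psi_j$ and $\epsilon_j$ contributions from class (a).
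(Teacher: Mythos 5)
Your proposal is correct and follows essentially the same route as the paper: the paper likewise writes $\dim_K\Lambda=\sum_{x\in Q_0}|\mathcal{B}_x|$, uses Corollary \ref{co:3.6}(1)--(3) to identify $\sum_{x\in Q_0^*}|\mathcal{B}_x|$ with the first sum, and then accounts for the remaining vertices $c_i$, $x_{1j}$, $x_{2j}$ via Corollary \ref{co:3.6}(2) and (4); your four-class splitting is just a finer version of this bookkeeping. One small remark: the equality of the $\epsilon_j$- and $\psi_j$-terms holds not merely because $\phi_j,\epsilon_j,\psi_j$ form an $f$-orbit (that alone would not force a common $g$-orbit), but because $y_{1j},y_{2j}$ are black, so $g$ agrees with $f$ there and $\mathcal{O}(\epsilon_j)=\mathcal{O}(\psi_j)$ -- the same fact the paper uses implicitly in Corollary \ref{co:3.6}(3).
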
 

\begin{proof} First, applying (1)-(3) from Corollary \ref{co:3.6}, we obtain that 
$$\sum_{x\in Q_0^*}|\mathcal{B}_x|=\sum_{\eta\in Q^*_1}m_\eta(n_\eta+n_\eta^\nu+2n_\eta^\phi).$$ 
On the other hand, vertices which do not belong to $Q_0^*$, are of the form $c_i$, for some $i\in\{1,\dots,s\}$, 
or $x_{1j}$ or $x_{2j}$, for some $j\in\{1,\dots,t\}$, so the remaining summands in the above expression are 
provided by conditions (2) and (4) from Corollary \ref{co:3.6}. \end{proof}

\section{Weighted triangulation algebras}\label{sec:4} 
We review some basic facts on weighted triangulation algebras needed for the proof of our main theorem. 

Recall that a triangulation quiver is a pair $(Q,f)$, where $Q=(Q_0,Q_1,s,t)$ is a finite connected $2$-regular 
quiver and $f:Q_1\to Q_1$ is a permutation such that $s(f(\alpha))=t(\alpha)$ for any arrow $\alpha\in Q_1$, and 
$f^3$ is the identity on $Q_1$. Equivalently, a triangulation quiver is given by a block decomposable quiver 
$Q=glue(B_1,\dots,B_r;\Theta)$ with all blocks of types I-III, and $f$ is defined by rotations of arrows in these 
blocks. In particular, $Q$ is a generalized triangulation quiver $(Q,*)$ with empty marking $*$, and $Q=Q^*$ 
(see also Proposition \ref{prop:2.2}). We keep notation introduced in the previous section. 

Let $(Q,f)$ be a triangulation quiver, $g:Q_1\to Q_1$ the permutation such that $g(\alpha)=\overline{f(\alpha)}$ 
for $\alpha\in Q_1$, and $\mathcal{O}(g)$ the set of all $g$-orbits in $Q_1$. Moreover, let $\partial(Q,f)$ be 
the set of all border vertices of $Q_0$, in the sense we discussed for generalized triangulation quivers. 
Consider a weight function $m_\bullet:\mathcal{O}(g)\to\mathbb{N}^*$, a parameter function 
$c_\bullet:\mathcal{O}(g)\to K^*$, and a border function $b_\bullet:\partial(Q,f)\to K$, if $\partial(Q,f)$ is 
not empty. We also assume that $m_\bullet$ satisfies the same restrictions as in Section \ref{sec:3}. 

In this paper, a weighted triangulation algebra is defined as follows (originally, this was called the socle 
deformed weighted triangulation algebra \cite[see Section 8]{ES2}). 

\begin{df}\label{df:4.1} The \emph{weighted triangulation algebra} $\Lambda(Q,f,m_\bullet,c_\bullet,b_\bullet)$ is the quotient 
$KQ/I$ of the path algebra $KQ$ by the ideal $I=I(Q,f,m_\bullet,c_\bullet,b_\bullet)$ generated by the following relations:   
\begin{enumerate}[(1)] 
\item $\alpha^2-c_{\bar{\alpha}}A_{\bar{\alpha}}-b_{s(\alpha)}B_\alpha$, for all border loops $\alpha\in Q_1$,
\item $\alpha f(\alpha)-c_{\bar{\alpha}}A_{\bar{\alpha}}$, for all arrows $\alpha\in Q_1$ which are not border loops, 
\item $\alpha f(\alpha) g( f(\alpha) )$, for all arrows $\alpha\in Q_1$ unless $f^2(\alpha)$ is virtual or unless $f(\bar{\alpha})$ 
is virtual with $m_{\bar{\alpha}}=1$ and $n_{\bar{\alpha}}=3$, 
\item $\alpha g(\alpha) f( g(\alpha) )$, for all arrows $\alpha\in Q_1$ unless $f(\alpha)$ is virtual or unless $f^2(\alpha)$ is 
virtual with $m_{f(\alpha)}=1$ and $n_{f(\alpha)}=3$.
\end{enumerate} \end{df} 

Clearly, the paths $A_\alpha$ (respectively, $B_\alpha$) are defined in the same way as in the previous section. We also 
note that for a genralized triangulation quiver $(Q,*)$ with $Q=Q^*$, definitions \ref{def:2.1} and \ref{df:4.1} coincide, 
i.e. algebras $\Lambda(Q,*,m,c,b)$ and $\Lambda(Q,f,m,c,b)$ are isomorphic. The following is a consequence of results 
proved in \cite{ES5}.  

\begin{theorem}\label{thm:4.2} Let $\Lambda=\Lambda(Q,f,m_\bullet,c_\bullet,b_\bullet)$ be a weighted triangulation algebra other 
than singular disc, triangle, tetrahedral or spherical algebra. Then the following statements hold. 
\begin{enumerate}[(1)] 
\item $\Lambda$ is a finite-dimensional algebra with $\dim_K=\sum_{\mathcal{O}\in\mathcal{O}(g)}m_\mathcal{O}n^2_\mathcal{O}$. 
\item $\Lambda$ is a tame symmetric algebra of infinite representation type.
\item $\Lambda$ is a periodic algebra of period $4$. 
\end{enumerate} \end{theorem}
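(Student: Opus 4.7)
The plan is to reduce Theorem \ref{thm:4.2} to results already available in the literature, principally those of \cite{ES5}, supplemented by the dimension formula just established in Section \ref{sec:3}. The crucial observation, emphasized in the sentence preceding the theorem, is that a triangulation quiver $(Q,f)$ is nothing other than a generalized triangulation quiver $(Q,*)$ with empty marking, so $Q=Q^*$, and under this identification the relations of Definition \ref{df:4.1} coincide with those of Definition \ref{df:3.1}. Thus $\Lambda(Q,f,m_\bullet,c_\bullet,b_\bullet)\cong\Lambda(Q,*,m_\bullet,c_\bullet,b_\bullet)$, and every result already proved for weighted generalized triangulation algebras may be specialised.

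For statement (1), finite-dimensionality is immediate from Theorem \ref{thm:3.7}. To recover the stated formula, I would note that empty marking means $s=t=0$, so the last two summands in Theorem \ref{thm:3.7} are empty, and for every arrow $\eta\in Q_1^*=Q_1$ the quantities $n_\eta^\nu$ and $n_\eta^\phi$ vanish. Hence Theorem \ref{thm:3.7} collapses to $\dim_K\Lambda=\sum_{\eta\in Q_1} m_\eta n_\eta$. Regrouping this sum by $g$-orbits and using that each orbit $\mathcal{O}$ contains $n_\mathcal{O}$ arrows, each with common invariants $m_\mathcal{O}$ and $n_\mathcal{O}$, gives the required expression $\sum_{\mathcal{O}\in\mathcal{O}(g)}m_\mathcal{O}n_\mathcal{O}^2$.

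For statements (2) and (3), I would appeal directly to \cite{ES5}. Every triangulation quiver with at least three vertices is the triangulation quiver of some directed triangulated surface by \cite[Theorem 4.11]{ES2}, so the class $\Lambda(Q,f,m_\bullet,c_\bullet,b_\bullet)$ coincides with the class of (socle-deformed) weighted surface algebras studied in \cite{ES2, ES5, ES9}. Outside of the explicitly excluded singular disc, triangle, tetrahedral and spherical cases, these algebras are proved there to be tame, symmetric, of infinite representation type, and periodic as bimodules with period exactly $4$. In particular, symmetry is read off from the socle structure described in Lemma \ref{lem:3.4} (specialised to blocks of types I--III), which supplies the nondegenerate associative symmetric bilinear form.

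The principal obstacle, were one to seek a self-contained argument rather than invoke \cite{ES5}, is the construction of an explicit $4$-periodic projective bimodule resolution of $\Lambda$. That resolution is what simultaneously governs the symmetric structure (via the Nakayama automorphism it produces) and the period $4$ property, and is the computational core of the cited paper; taming of representation type likewise rests on this combinatorial input via case analysis of the underlying block decomposition. In the present exposition we treat this as a known black box and do not redo the calculation.
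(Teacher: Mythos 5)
Your proposal is correct and follows essentially the route the paper takes: Theorem \ref{thm:4.2} is stated there as a recollection of results from \cite{ES5} (with the identification of $\Lambda(Q,f,m_\bullet,c_\bullet,b_\bullet)$ with the empty-marking case of Definition \ref{df:3.1}), and no independent proof is given, so deferring (2) and (3) to \cite{ES5} is exactly what the paper does. Your extra derivation of the dimension formula by specialising Theorem \ref{thm:3.7} (with $s=t=0$, $n_\eta^\nu=n_\eta^\phi=0$, and regrouping $\sum_{\eta}m_\eta n_\eta$ by $g$-orbits) is a correct and non-circular consistency check, since Proposition \ref{prop:3.5} and Theorem \ref{thm:3.7} are proved in the paper directly from the relations.
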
 

We recall also the following description of indecomposable projective modules over a weighted triangulation algebra, established in 
\cite[Lemma 4.7]{ES5}. 

\begin{prop}\label{prop:4.3} Let $\Lambda=\Lambda(Q,f,m_\bullet,c_\bullet,b_\bullet)$ be a weighted triangulation algebra, $i$ a 
vertex of $Q$ and $\alpha,\bar{\alpha}$ the two arrows in $Q_1$ starting at $i$. Then the following statements hold. 
\begin{enumerate}[(1)] 
\item Assume $\alpha$ is virtual. Then the module $e_i\Lambda$ has a basis $\mathcal{B}_i$ formed by all initial submonomials of 
$B_{\bar{\alpha}}$ together with $e_i$ and $\bar{\alpha}f(\bar{\alpha})$.  

\item Assume that $\alpha$ and $\bar{\alpha}$ are not virtual. Then the module $e_i\Lambda$ has a basis $\mathcal{B}_i$ formed by 
all proper initial submonomials of $B_\alpha$ and $B_{\bar{\alpha}}$ together with $e_i$ and $B_{\alpha}$. 

\item We have the equalities 
$$\alpha f(\alpha) f^2(\alpha)=c_\alpha B_\alpha = c_{\bar{\alpha}} B_{\bar{\alpha}}=\bar{\alpha} f(\bar{\alpha}) f^2(\bar{\alpha})$$ 
and this element generates the socle of $e_i\Lambda$. 
\end{enumerate} \end{prop}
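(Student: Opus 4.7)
The plan is to prove all three statements in tandem, starting from part (3) which identifies the socle element, then using the defining relations to establish that $\mathcal{B}_i$ spans $e_i\Lambda$, and finally confirming linear independence by a global dimension count against Theorem~\ref{thm:4.2}(1). Because a triangulation quiver is a generalized triangulation quiver with empty marking (so $Q=Q^*$, no blocks of type IV or V, and relations (3)--(6) of Definition~\ref{df:3.1} degenerate into relations (3)--(4) of Definition~\ref{df:4.1}), much of the combinatorial bookkeeping will mirror the $s=t=0$ case of Proposition~\ref{prop:3.5} and Lemma~\ref{lem:3.4}; I will exploit that parallel but give a direct argument to keep the proof self-contained.

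For part (3), I would first observe that relation (2) of Definition~\ref{df:4.1} gives $\alpha f(\alpha)=c_{\bar\alpha}A_{\bar\alpha}$, and right-multiplying by $f^2(\alpha)$ yields $\alpha f(\alpha)f^2(\alpha)=c_{\bar\alpha}A_{\bar\alpha}f^2(\alpha)$. Because $g(\bar\alpha)=\overline{f(\bar\alpha)}=f^2(\alpha)$ (using $f^3=\mathrm{id}$ and $\bar{\bar\alpha}=\alpha$), the right-hand side equals $c_{\bar\alpha}B_{\bar\alpha}$, and symmetrically $\alpha f(\alpha)f^2(\alpha)=c_\alpha B_\alpha$. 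For border loops one argues similarly using relation (1); the ``border correction'' $b_{s(\alpha)}B_\alpha\cdot\alpha$ vanishes because $B_\alpha$ is already a length-$m_\alpha n_\alpha$ cycle and a further arrow produces a subpath that can be rewritten into a $g$-cycle power beyond the socle bound. That this common element lies in $\soc(e_i\Lambda)$ is shown by checking $B_\alpha\cdot\gamma=0$ for every arrow $\gamma$ out of $t(B_\alpha)=i$: both choices $\gamma=\alpha$ and $\gamma=\bar\alpha$ yield paths of $g$-length exceeding $m_\alpha n_\alpha$, which can be re-expressed, via repeated application of relation (2), as scalar multiples of paths containing a zero subword from relation (3) or (4).

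For the spanning claim in (1) and (2), I would show by induction on length that every nonzero path in $e_i\Lambda$ can be normalized to a path along the $g$-orbit of one of the two initial arrows. The base case is length $\leq 1$. For the inductive step, given a path $p=\eta_1\eta_2\cdots\eta_k$ starting at $i$, relation (2) lets me replace any subpath of the form $\eta_j f(\eta_j)$ by a scalar multiple of $A_{\bar{\eta_j}}$, which walks along the $g$-cycle of $\bar{\eta_j}$; iterating, $p$ becomes a scalar multiple of an initial submonomial of $B_\alpha$ or $B_{\bar\alpha}$ (possibly padded by virtual-arrow exceptions), or it becomes zero by hitting one of the length-$3$ zero relations. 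In the virtual case (1), the arrow $\alpha$ with $m_\alpha n_\alpha=2$ forces $\alpha f(\alpha)$ to be the only nontrivial new element starting with $\alpha$, which accounts for the extra basis vector $\bar\alpha f(\bar\alpha)=c_\alpha A_\alpha$ after swapping the roles.

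Finally, to upgrade the spanning set to a basis, I would sum $|\mathcal{B}_i|$ over $i\in Q_0$ and match it with $\sum_{\mathcal{O}\in\mathcal{O}(g)}m_{\mathcal{O}}n_{\mathcal{O}}^2$ from Theorem~\ref{thm:4.2}(1): in the non-virtual case each vertex contributes $m_\alpha n_\alpha + m_{\bar\alpha}n_{\bar\alpha}$, and summing (with the factor $n_{\mathcal{O}}$ arising from the $n_{\mathcal{O}}$ arrows in a $g$-orbit each contributing once) gives exactly $\sum m_{\mathcal{O}}n_{\mathcal{O}}^2$; the virtual case is handled by the bookkeeping adjustment of replacing a ``missing'' basis element of $\mathcal{B}_\alpha$ with the element $\bar\alpha f(\bar\alpha)$. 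The main obstacle I anticipate is verifying the virtual-arrow exceptions consistently: one must check that the exclusion clauses ``unless $f^2(\alpha)$ is virtual'' and ``unless $f(\bar\alpha)$ is virtual with $m_{\bar\alpha}=1$ and $n_{\bar\alpha}=3$'' in relations (3)--(4) exactly match the extra spanning elements needed and do not accidentally reintroduce linear dependence, so a careful case analysis on the local structure at $i$ (virtual loop, virtual $2$-cycle, or neither) will be the bulk of the technical work.
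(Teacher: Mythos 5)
First, a point of comparison: the paper offers no proof of this proposition at all; it is explicitly recalled from \cite[Lemma 4.7]{ES5}, so your attempt has to be judged as a reconstruction of the argument in that source (whose route, as in the analogous Proposition \ref{prop:3.5} of this paper, is to establish the bases directly from the relations and deduce the dimension afterwards). Your treatment of the spanning step and of the socle membership is in the right spirit (rewrite $f$-steps via relation (2) of Definition \ref{df:4.1} into paths along $g$-orbits, and kill overlong paths with the length-three zero relations), but part (3) has a genuine gap. Right-multiplying relation (2) by $f^2(\alpha)$ does give $\alpha f(\alpha)f^2(\alpha)=c_{\bar{\alpha}}B_{\bar{\alpha}}$, but the index identity needed is $g^{-1}(\bar{\alpha})=f^2(\alpha)$, equivalently $g(f^2(\alpha))=\overline{f^3(\alpha)}=\bar{\alpha}$; your claim ``$g(\bar{\alpha})=f^2(\alpha)$'' is false in general. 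More importantly, ``symmetrically'' only yields $\bar{\alpha}f(\bar{\alpha})f^2(\bar{\alpha})=c_{\alpha}B_{\alpha}$; the middle equality $c_{\alpha}B_{\alpha}=c_{\bar{\alpha}}B_{\bar{\alpha}}$, which is the whole content of (3) (it is what makes the socle element well defined), is nowhere proved. It needs a separate computation, for instance $c_{\alpha}B_{\alpha}=c_{g(\alpha)}\,\alpha A_{g(\alpha)}=\alpha f(\alpha)f^2(\alpha)$ using relation (2) applied to the arrow $f(\alpha)$ (since $\overline{f(\alpha)}=g(\alpha)$ and $c_\bullet$ is constant on $g$-orbits), with separate care when $f(\alpha)$ is a border loop (the extra $b$-term must be shown to vanish) or when virtual arrows intervene; note also that membership in $\soc(e_i\Lambda)$ is weaker than generating it, the latter only following once the basis is in place.

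The more serious structural problem is your linear-independence step. The dimension formula of Theorem \ref{thm:4.2}(1) is not an independent input: in \cite{ES5} it is obtained precisely from the description of the indecomposable projective modules that you are trying to prove (the same logical order is followed in this paper for the generalized case: Proposition \ref{prop:3.5} first, then Corollary \ref{co:3.6} and Theorem \ref{thm:3.7}). Hence deducing independence by matching $\sum_{i\in Q_0}|\mathcal{B}_i|$ against $\sum_{\mathcal{O}\in\mathcal{O}(g)}m_{\mathcal{O}}n_{\mathcal{O}}^2$ is circular as a reconstruction; it only checks that the two cited statements are mutually consistent, while the genuinely hard content (that the relations do not collapse the algebra below the expected size) is being assumed. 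A non-circular argument must prove independence directly from the relations, as in the proof of Proposition \ref{prop:3.5}: use that $B_\eta$ lies in the socle to separate the initial submonomials of a single $B_\eta$, and multiply a putative dependence between the two strands at $i$ by suitable arrows to reduce it to paths along one $g$-cycle -- this is exactly where the exceptional clauses of relations (3)--(4) of Definition \ref{df:4.1}, which you rightly flag as delicate, enter. Your counting identity $|\mathcal{B}_i|=m_{\alpha}n_{\alpha}+m_{\bar{\alpha}}n_{\bar{\alpha}}$ is correct and worth keeping, but as the final step of the proof it points the implication in the wrong direction.
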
 

We saw in Proposition \ref{prop:3.5} how the above result extends to the weighted generalized triangulation algebras. 

Now, let $\Lambda=\Lambda(Q,*,m_\bullet,c_\bullet,b_\bullet)$ be a weighted generalized triangulation algebra, and 
$Q=glue(B_1,\dots,B_r;\Theta)$. We shall associate to $\Lambda$ a weighted triangulation algebra 
$$\Lambda^\Delta=\Lambda(Q^\Delta,f^\Delta,m^\Delta_\bullet,c^\Delta_\bullet,b^\Delta_\bullet),$$ 
defined in a canonical way. We set $\Lambda^\Delta=\Lambda$ if $\Lambda$ is a weighted triangulation algebra, or 
equivalently, all blocks $B_1,\dots,B_r$ are of types I-III. So assume throughout that blocks of type IV or V occur 
in the family $B_1,\dots,B_r$. 

Refering to the notation introduced before, if $(Q,*)$ contains blocks of type IV, then these are the blocks $B_i$ 
of the form 
$$\xymatrix{ & \bullet \mbox{ }c_i  \ar[ld]_{\alpha_i} & \\ 
a_i\mbox{ }\circ\ar[rr]_{\tau_i}^{*} & & \circ\mbox{ } b_i \ar[lu]_{\beta_i}\ar[ld]^{\nu_i} \\ 
 & \bullet\mbox{ }d_i\ar[lu]^{\delta_i} & }$$ 
for $i\in\{1,\dots,s\}$, and $s=0$ if no block of type IV occurs. Similarily, if $(Q,*)$ contains blocks of 
type V, then these are the blocks $B_{s+j}$ of the form 
$$\xymatrix@C=0.5cm{&y_{2j}\bullet\ar[rrd]^(0.33){\rho_j}\ar[rr]^{\epsilon_j}&&\bullet y_{1j}\ar@/^{40pt}/[ldd]^{\psi_j}& \\ 
&x_{2j}\bullet\ar[rr]^{\sigma_j}_{*}\ar[rru]^(0.2){\eta_j}&&\bullet x_{1j}\ar[ld]^{\omega_j}& \\ 
&&\circ z_j\ar[lu]^{\gamma_j}\ar@/^{40pt}/[luu]^{\phi_j}&& }$$ 
for $j\in\{1,\dots,t\}$, where $t=0$ if no block of type V occurs in $(Q,*)$. Now, we define the triangulation 
quiver $(Q^\Delta,f^\Delta)$, which is obtained from $Q$ in the following way. 
\begin{enumerate}[(1)] 
\item We replace any block $B_i$, $i\in\{1,\dots,s\}$, of type IV by the following quiver $B_i^\Delta$ 
$$\xymatrix{ & \bullet \mbox{ }c_i \ar@<-0.1cm>[dd]_{\xi_i}  \ar[rd]^{\beta_i} & \\ 
a_i\mbox{ }\circ\ar[ru]^{\alpha_i}  & & \circ\mbox{ } b_i \ar[ld]^{\nu_i} \\ 
 & \bullet\mbox{ }d_i\ar[lu]^{\delta_i}\ar@<-0.1cm>[uu]_{\mu_i} & }$$ 
being gluing of two blocks of type II, and hence we have the $f^\Delta$-orbits $(\alpha_i\mbox{ }\xi_i\mbox{ }\delta_i)$ and 
$(\beta_i\mbox{ }\nu_i\mbox{ }\mu_i)$. 

\item We replace any block $B_{s+j}$, $j\in\{1,\dots,t\}$, of type V by the following quiver $B^\Delta_{s+j}$ 
$$\xymatrix@C=0.5cm{&x_{1j}\circ\ar[rd]^(0.75){\lambda_j}\ar@/^{65pt}/[dd]^{\kappa_j} &&&\\ 
x_{2j}\circ\ar[ru]^{\theta_j}\ar@<-0.1cm>[rr]_(0.4){\xi_j'} & & 
\circ y_{2j}\ar[ld]^(0.25){\epsilon_j}\ar@<-0.1cm>[ll]_(0.6){\mu_j'} & &\circ z_j\ar@<-0.25cm>[lllu]_{\zeta_j}\\
&y_{1j}\circ\ar[lu]^{\eta_j}\ar@<-0.25cm>[rrru]_{\psi_j} &&& }$$ 
being gluing of three blocks of type II, and hence we have the $f^\Delta$-orbits $(\eta_j\mbox{ }\xi_j'\mbox{ }\epsilon_j)$, 
$(\theta_j\mbox{ }\lambda_j\mbox{ }\mu_j')$, and $(\psi_j\mbox{ }\zeta_j\mbox{ }\kappa_j)$. 

\item We set $f^\Delta(\alpha)=f(\alpha)$ for any arrow $\alpha\in Q_1$ which belongs to blocks of types I-III.
\end{enumerate} 

Observe that $Q^\Delta=glue(B_1^\Delta,\dots,B_r^\Delta)$ and $(Q^\Delta,f^\Delta)$ is a triangulation quiver with 
permutation $f^\Delta$ given as above (clearly, we put $B^\Delta_k:=B_k$, for $k>s+t$). Clearly, we have 
$\partial(Q^\Delta,f^\Delta)=\partial(Q,*)$. We denote by $g^\Delta:Q^\Delta_1\to Q^\Delta_1$ the permutation 
associated to $f^\Delta$, that is, $g^\Delta(\alpha)=\overline{f^\Delta(\alpha)}$ for any arrow $\alpha\in Q^\Delta_1$. 
Further, we denote by $\mathcal{O}^\Delta(\alpha)$ the $g^\Delta$-orbit of an arrow $\alpha\in Q^\Delta_1$, any by 
$\mathcal{O}(g^\Delta)$ the set of all $g^\Delta$-orbits in $Q^\Delta_1$. 

The set $\mathcal{O}(g^\Delta)$ consists of the following three types of $g^\Delta$-orbits: 
\begin{enumerate}[(1)] 
\item $\mathcal{O}^\Delta(\xi_i)=(\xi_i\mbox{ }\mu_i)$ in the quivers $B^\Delta_i$, for $i\in\{1,\dots,s\}$, 

\item $\mathcal{O}^\Delta(\xi_j')=(\xi_j'\mbox{ }\mu_j')$ and $\mathcal{O}^\Delta(\theta_j)=
(\theta_j\mbox{ }\kappa_j\mbox{ }\eta_j)$ in the quivers $B^\Delta_{s+j}$, for $j\in\{1,\dots,t\}$, and 

\item the $g^\Delta$-orbits obtained from the $g$-orbits in $Q^*_1$ by replacing the arrows $\tau_j$ by the 
paths $\alpha_i\beta_i$, for $i\in\{1,\dots,s\}$, and the arrows $\phi_j$ by the paths $\zeta_j\lambda_j$, for $j\in\{1,\dots,t\}$.
\end{enumerate} 

We may also define the weight function $m^\Delta_\bullet:\mathcal{O}(g^\Delta)\to\mathbb{N}^*$, the parameter 
function $c^\Delta_\bullet:\mathcal{O}(g^\Delta)\to K^*$, and the border function 
$b^\Delta_\bullet:\partial(Q^\Delta,f^\Delta)\to K$ as described below. 

We set 
$$m^\Delta_{\mathcal{O}^\Delta(\xi_i)}=1, m^\Delta_{\mathcal{O}^\Delta(\xi_j')}=1, m^\Delta_{\mathcal{O}^\Delta(\theta_j)}=1, 
m^\Delta_{\mathcal{O}^\Delta(\alpha_i)}=m_{\mathcal{O}(\tau_i)},$$ 
$$c^\Delta_{\mathcal{O}^\Delta(\xi_i)}=1, c^\Delta_{\mathcal{O}^\Delta(\xi_j')}=1, c^\Delta_{\mathcal{O}^\Delta(\theta_j)}=1, 
c^\Delta_{\mathcal{O}^\Delta(\alpha_i)}=c_{\mathcal{O}(\tau_i)},$$
for $i\in\{1,\dots,s\}$ and $j\in\{1,\dots,t\}$. The remaining $g^\Delta$-orbits in $Q^\Delta_1$ are of the form 
$\mathcal{O}^\Delta(\eta)$ for some arrow $\eta\in Q^*_1$, and we put 
$$m^\Delta_{\mathcal{O}^\Delta(\eta)}=m_{\mathcal{O}(\eta)}\quad\mbox{and}\quad c^\Delta_{\mathcal{O}^\Delta(\eta)}=
c_{\mathcal{O}(\eta)}.$$ 

Moreover, for any vertex $i\in\partial(Q^\Delta,f^\Delta)=\partial(Q,*)$ we set $b^\Delta_i=b_i$. 

\begin{exmp}\label{ex:4.4} Let $(Q,*)$ be the generalized triangulation quiver considered in Example 
\ref{ex:3.2}. Then the associated triangulation 
quiver $(Q^\Delta,f^\Delta)$ is of the form 
$$\xymatrix@R=0.01cm{
&&&\circ_5\ar[rddd]_{\lambda}\ar@/_10pt/[ddddd]^{\kappa}&&\\
&&&&&\\
&\ar@(lu,ld)[d]_{\mu}&&&&\\
& _{1}\bullet\ar@<0.2cm>[r]^{\alpha}  &\circ 2 \ar@<0.1cm>[l]^{\beta}\ar[ruuu]^{\zeta}&& 
\circ_4\ar[ldd]_{\epsilon}\ar@<0.1cm>[r]^{\mu '}& 
\circ_3\ar@/_10pt/@<-0.1cm>[lluuu]_{\theta}\ar@<0.1cm>[l]^{\xi '} \\  
&&&&&\\
&&&\circ_6\ar@/_10pt/@<-0.1cm>[rruu]_{\eta}\ar[luu]^{\psi}&&\\} $$ 
with $f^\Delta$-orbits: $(\eta\mbox{ }\xi '\mbox{ }\epsilon)$, $(\theta\mbox{ }\lambda\mbox{ }\mu ')$, 
$(\psi\mbox{ }\zeta\mbox{ }\kappa)$, $(\alpha\mbox{ }\beta\mbox{ }\mu)$. Then $\mathcal{O}(g^\Delta)$ consists 
of the following $g^\Delta$-orbits: 
$$\mathcal{O}^\Delta(\xi ')=(\xi '\mbox{ }\mu '),\mathcal{O}^\Delta(\theta)=(\theta\mbox{ }\kappa\mbox{ }\eta),$$ 
$$\mathcal{O}^\Delta(\alpha)=(\alpha\mbox{ }\zeta\mbox{ }\lambda\mbox{ }\epsilon\mbox{ }\psi\mbox{ }
\beta), \mathcal{O}^\Delta(\mu)=(\mu).$$  
Clearly, the border $\partial(Q^\Delta,f^\Delta)$ is empty. 

Further, if $m_\bullet:\mathcal{O}(g)\to \mathbb{N}^*$ and $c_\bullet:\mathcal{O}(g)\to K^*$ are weight and 
parameter functions of $(Q,*)$, then the induced weight and parameter functions 
$m^\Delta_\bullet:\mathcal{O}(g^\Delta)\to \mathbb{N}^*$ and $c^\Delta_\bullet:\mathcal{O}(g^\Delta)\to K^*$ 
of $(Q^\Delta,f^\Delta)$ are given by the following formulas: 
$$m^\Delta_{\mathcal{O}^\Delta(\xi ')}=1, m^\Delta_{\mathcal{O}^\Delta(\theta)}=
1,m^\Delta_{\mathcal{O}^\Delta(\alpha)}=m_{\mathcal{O}(\alpha)}, 
m^\Delta_{\mathcal{O}^\Delta(\mu)}=m_{\mathcal{O}(\mu)},$$
$$c^\Delta_{\mathcal{O}^\Delta(\xi ')}=1, c^\Delta_{\mathcal{O}^\Delta(\theta)}=1,
c^\Delta_{\mathcal{O}^\Delta(\alpha)}=c_{\mathcal{O}(\alpha)}, 
c^\Delta_{\mathcal{O}^\Delta(\mu)}=c_{\mathcal{O}(\mu)}.$$ 
We set, as in Example \ref{ex:3.2}, $m=m_{\mathcal{O}(\alpha)}$, $n=m_{\mathcal{O}(\mu)}$, 
$c=c_{\mathcal{O}(\alpha)}$, $d=c_{\mathcal{O}(\mu)}$. 

Then the associated weighted triangulation algebra $\Lambda^\Delta=
\Lambda(Q^\Delta,f^\Delta,m_\bullet^\Delta,c_\bullet^\Delta)$ is defined by the quiver $Q^\Delta$ and the relations: 
$$\alpha\beta=d\mu^{n-1},\beta\mu=c(\zeta\lambda\epsilon\psi\beta\alpha)^{m-1}\zeta\lambda\epsilon\psi\beta,
\mu\alpha=c(\alpha\zeta\lambda\epsilon\psi\beta)^{m-1}\alpha\zeta\lambda\epsilon\psi,$$
$$\zeta\kappa=c(\beta\alpha\zeta\lambda\epsilon\psi)^{m-1}\beta\alpha\zeta\lambda\epsilon,
\kappa\psi=c(\lambda\epsilon\psi\beta\alpha\zeta)^{m-1}\lambda\epsilon\psi\beta\alpha,\psi\zeta=\eta\theta,$$ 
$$\eta\xi '=c(\psi\beta\alpha\zeta\lambda\epsilon)^{m-1}\psi\beta\alpha\zeta\lambda,
\xi '\epsilon=\theta\kappa,\epsilon\eta=\mu ',$$
$$\theta\lambda=\xi ',\lambda\mu '=\kappa\eta,\mu '\theta=c(\epsilon\psi\beta\alpha\zeta\lambda)^{m-1}\epsilon\psi\beta\alpha\zeta,$$ 
$$\zeta\kappa\eta=0,\kappa\psi\beta=0,\eta\xi '\mu '=0,\lambda\mu '\xi '=0,\xi '\epsilon\psi=0,$$
$$\mu '\theta\kappa=0,\beta\mu^2=0,\mu\alpha\zeta=0,\mbox{ and }\alpha\beta\alpha=0\mbox{ if }n\geqslant 3$$
$$\zeta\lambda\mu '=0,\kappa\eta\xi '=0,\psi\beta\mu=0,\xi '\mu '\theta=0,\mu '\xi '\epsilon=0,$$ 
$$\theta\kappa\psi=0,\mu^2\alpha=0,\alpha\zeta\kappa=0,\mbox{ and }\beta\alpha\beta=0\mbox{ if }n\geqslant 3,$$
Observe that for $n=2$ the loop $\mu$ is virtual. For example, there are no zero relations of the forms $\psi \zeta \lambda=0$ and 
$\epsilon\psi\zeta=0$, since $f(\bar{\psi})=\xi '$ is virtual with $m^\Delta_{\bar{\psi}}=1$ and $n^\Delta_{\bar{\psi}}=3$, whereas 
$f^2(\epsilon)=\xi '$ is virtual with $m^\Delta_{f(\epsilon)}=1$ and $n^\Delta_{f(\epsilon)}=3$ (note that obviously, these are not 
all relations excluded in items 3)-4) of Definition \ref{df:4.1}). Due to Theorem \ref{thm:4.2}, $\Lambda^\Delta$ is of dimension: 
$$\dim_K\Lambda^\Delta=\sum_{\mathcal{O}^\Delta\in\mathcal{O}(g^\Delta)} 
m^\Delta_{\mathcal{O}^\Delta}(n^\Delta_{\mathcal{O}^\Delta})^2=2^2+3^2+m\cdot 6^2 + n\cdot 1^2 = 36m+n+13.$$ 

We also observe that  $\Lambda^\Delta$ is given by its Gabriel quiver $Q_{\Lambda^\Delta}$, obtained from the quiver $Q^\Delta$ by 
removing the virtual arrows $\xi ',\mu '$ and $\mu$ if $n=2$, and the relations obtained from the above presented by substitutions 
$\xi '=\theta\lambda$ and $\mu '=\epsilon\eta$ (and $\mu=\alpha\beta$ if $n=2$). \end{exmp} 

\section{Proofs of Theorems 1 and 2}\label{sec:proof} 

In this section, we are going to prove Theorems 1 and 2. Since Theorem 1 is a direct consequence of Theorem 2 and Theorems 
\ref{thm:1.2}, \ref{thm:1.3}, \ref{thm:1.4} and \ref{thm:4.2}, we will only focus on the proof of Theorem 2. \smallskip 

Let 
$$\Lambda=\Lambda(Q,*,m_\bullet,c_\bullet,b_\bullet)\mbox{ and }
\Lambda^\Delta=\Lambda(Q^\Delta,f^\Delta,m_\bullet^\Delta,c_\bullet^\Delta,b^\Delta_\bullet)$$ 
be a weighted generalized triangulation algebra and the associated weighted triangulation algebra. We keep notation introduced 
in Section \ref{sec:4} and assume that $Q=glue(B_1,\dots,B_r;\Theta)$ with at least one of blocks $B_1,\dots,B_r$ of type IV or 
V (otherwise, we have $\Lambda=\Lambda^\Delta$ and there is nothing to prove). We note that this assumption implies 
$|Q^\Delta_0|=|Q_0|\geqslant 4$. In particular, it follows that $\Lambda^\Delta$ is not a singular disc, triangle, or tetrahedral 
algebra. In general, $\Lambda^\Delta$ is so called generalized spherical algebra \cite[Section 5]{HSS} if and only if $Q$ is 
glueing of two blocks of type IV with middle arrows $\tau_1,\tau_2$ in opposite directions. If $\Lambda^\Delta$ is a spherical 
algebra, then we always assume that $\Lambda^\Delta$ is not singular (equivalently, $m_{\tau_1}\geqslant 2$ or $m_{\tau_1}=1$ 
and $c_{\delta_1}c_{\tau_1}\neq -1$; see \cite[Section 5(2)]{HSS}). We have to exclude all mentioned singular algebras to ensure 
that $\Lambda^\Delta$ will be always tame, symmetric and periodic algebra of period $4$; see also 
\cite[Propositions 4.13(ii) and 3.9(iii)]{ES5}.  

It follows from definition of $\Lambda^\Delta$ that triangulation quiver $(Q^\Delta,f^\Delta)$ has the following 
$g^\Delta$-orbits consisting of virtual arrows: 
$$\mathcal{O}^\Delta(\xi_i)=(\xi_i\mbox{ }\mu_i),\quad\mbox{for }i\in\{1,\dots,s\},\mbox{ if }s\geqslant 1,$$
$$\mathcal{O}^\Delta(\xi_j')=(\xi_j'\mbox{ }\mu_j'),\quad\mbox{for }j\in\{1,\dots,t\},\mbox{ if }t\geqslant 1.$$ 
We consider the sequence of virtual arrows 
$$(\xi,\xi ')=(\xi_1,\dots,\xi_s,\xi_1',\dots,\xi_t').$$ 

For each vertex $x\in Q^\Delta_0$, we denote by $P^\Delta_x$ the associated indecomposable projective module $e_x\Lambda^\Delta$ 
in $\mod\Lambda^\Delta$. Moreover, for any arrow $\eta\in Q^\Delta_1$, we identify $\eta$ with the homomorphism 
$\eta:P^\Delta_{t(\eta)}\to P^\Delta_{s(\eta)}$ in $\mod\Lambda^\Delta$ given by left multiplication by $\eta$. We use the same 
convention for $\eta\in KQ^\Delta$ being a combination of paths with given source and target. 

Consider the following complexes in the homotopy category $K^b(P_{\Lambda^\Delta})$ of projective modules in $\mod\Lambda^\Delta$: 
$$T_x^{(\xi,\xi ')}:\qquad\xymatrix{0\ar[r] & P^\Delta_x\ar[r] & 0}$$ 
concentrated in degree $0$, for all vertices $x\in Q^\Delta_0$ different from $c_i$ and $x_{2j}$, 
$i\in\{1,\dots,s\}$, $j\in\{1,\dots,t\}$, and 
$$T_{c_i}^{(\xi,\xi ')}:\quad \xymatrix{0\ar[r] & P^\Delta_{c_i}\ar[r]^{\alpha_i} & P^\Delta_{a_i}\ar[r] & 0},$$
$$T_{x_{2j}}^{(\xi,\xi ')}:\quad \xymatrix{0\ar[r] & P^\Delta_{x_{2j}}\ar[r]^{\eta_j} & P^\Delta_{y_{1j}}\ar[r] & 0}$$ 
concentrated in degrees $1$ and $0$, for any $i\in\{1,\dots,s\}$ and $j\in\{1,\dots,t\}$. We put 
$T^{(\xi,\xi ')}:=\bigoplus_{x\in Q_0} T_x^{(\xi,\xi ')}$, and define the following algebra 
$$\Lambda^\Delta(\xi,\xi '):=\End_{K^b(P_{\Lambda^\Delta})}(T^{(\xi,\xi ')}).$$ 

Observe that $T^{(\xi,\xi ')}$ is an Okuyama-Rickard complex, so it is a tilting complex in $K^b(P_{\Lambda^\Delta})$, by 
Proposition \ref{pro:1.5}. Hence $\Lambda^\Delta(\xi,\xi ')$ is derived equivalent to $\Lambda^\Delta$. Moreover, we note that if the 
border function $b_\bullet$ is zero (or the set of border vertices is empty), then $\Lambda^\Delta(\xi,\xi ')$ is exactly the virtual 
mutation of the weighted triangulation algebra $\Lambda^\Delta$ with respect to a sequence $(\xi,\xi ')$ of virtual arrows in the 
sense of \cite{HSS}. 

We have the following consequence of Theorem \ref{thm:4.2} and the results on derived equivalences of 
self-injective algebras presented 
in Section \ref{sec:1}. 

\begin{theorem}\label{thm:4.5} The following statements hold. 
\begin{enumerate}[(1)] 
\item $\Lambda^\Delta(\xi,\xi ')$ is a finite-dimensional algebra. 
\item $\Lambda^\Delta(\xi,\xi ')$ is a tame symmetric algebra of infinite representation type. 
\item $\Lambda^\Delta(\xi,\xi ')$ is a periodic algebra o period $4$. 
\end{enumerate} \end{theorem}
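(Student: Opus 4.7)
The plan is to exhibit $T^{(\xi,\xi')}$ as a tilting complex in $K^b(P_{\Lambda^\Delta})$, which by Theorem \ref{thm:1.1} yields a derived equivalence between $\Lambda^\Delta$ and $\Lambda^\Delta(\xi,\xi')$, and then to transfer the three properties from $\Lambda^\Delta$, where they are established by Theorem \ref{thm:4.2}, to $\Lambda^\Delta(\xi,\xi')$ using the results collected in Section \ref{sec:1}. The hypothesis that at least one block of type IV or V occurs forces $|Q^\Delta_0|\geqslant 4$, so $\Lambda^\Delta$ is not a singular disc, triangle or tetrahedral algebra; combined with the standing exclusion of the singular generalized spherical case, this places $\Lambda^\Delta$ squarely within the scope of Theorem \ref{thm:4.2}.

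For the tilting property, decompose $\Lambda^\Delta=P\oplus Q$ with $P=\bigoplus_{i=1}^s P^\Delta_{c_i}\oplus\bigoplus_{j=1}^t P^\Delta_{x_{2j}}$ and $Q$ the sum of the remaining indecomposable projectives, and set $Q'=\bigoplus_{i=1}^s P^\Delta_{a_i}\oplus\bigoplus_{j=1}^t P^\Delta_{y_{1j}}$. Let $f:P\to Q'$ have components given by left multiplication by $\alpha_i$ and $\eta_j$. Then $T^{(\xi,\xi')}=T_1\oplus T_2$, where $T_1=Q$ is concentrated in degree $0$ and $T_2=(P\xrightarrow{f} Q')$ in degrees $1$ and $0$, matching the setup of Proposition \ref{pro:1.5}. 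It therefore suffices to verify that $f$ is a left $\add(Q)$-approximation of $P$.

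The verification reduces to showing that the unique arrow of $Q^\Delta$ ending at $c_i$ (respectively $x_{2j}$) other than $\alpha_i$ (respectively $\eta_j$) factors through $\alpha_i$ (respectively $\eta_j$) in $\Lambda^\Delta$. The ``other'' arrow at $c_i$ is $\mu_i$. Because $\mathcal{O}^\Delta(\xi_i)=(\xi_i,\mu_i)$ has weight $1$ and length $2$, the path $A_{\mu_i}$ has length $1$ and equals $\mu_i$, so the defining relation $\delta_i\alpha_i=c_{\bar{\delta}_i}A_{\bar{\delta}_i}=c_{\mu_i}\mu_i$ of $\Lambda^\Delta$ yields $\mu_i=c_{\mu_i}^{-1}\delta_i\alpha_i$. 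Consequently every path in $\Lambda^\Delta$ ending at $c_i$ can be rewritten to terminate with $\alpha_i$, so every morphism $P^\Delta_{c_i}\to P^\Delta_y$ with $P^\Delta_y\in\add(Q)$ factors through $\alpha_i$. The analogous computation at $x_{2j}$, exploiting the virtual orbit $\mathcal{O}^\Delta(\xi_j')=(\xi_j',\mu_j')$ and the relation $\epsilon_j\eta_j=c_{\mu_j'}\mu_j'$, produces $\mu_j'=c_{\mu_j'}^{-1}\epsilon_j\eta_j$ and settles the $x_{2j}$ case. Proposition \ref{pro:1.5} then confirms that $T^{(\xi,\xi')}$ is a tilting complex, so $\Lambda^\Delta(\xi,\xi')$ and $\Lambda^\Delta$ are derived equivalent.

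Part (1) is immediate: $\Lambda^\Delta(\xi,\xi')$ is the endomorphism ring (in the homotopy category) of a bounded complex of finitely generated projective modules over a finite-dimensional algebra, hence finite-dimensional. For (2), Theorem \ref{thm:1.2} transfers symmetry from $\Lambda^\Delta$, and since both algebras are thereby self-injective, Theorem \ref{thm:1.4}(1) transfers tameness; derived equivalence of self-injective algebras descends to a stable equivalence, as recalled in Section \ref{sec:1}, under which representation-finiteness is preserved, so infinite representation type transfers as well. Theorem \ref{thm:1.3} gives (3) with the period $4$ preserved. The main obstacle is the verification of the left approximation property; once the two virtuality identities $\mu_i=c_{\mu_i}^{-1}\delta_i\alpha_i$ and $\mu_j'=c_{\mu_j'}^{-1}\epsilon_j\eta_j$ are extracted from the weighted triangulation relations of Definition \ref{df:4.1}, everything else reduces to a direct invocation of the transfer theorems of Section \ref{sec:1}.
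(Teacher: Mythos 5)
Your proposal is correct and follows essentially the same route as the paper: exhibit $T^{(\xi,\xi')}$ as a tilting complex via Proposition \ref{pro:1.5}, conclude derived equivalence with $\Lambda^\Delta$, and transfer symmetry, tameness/representation type and periodicity from Theorem \ref{thm:4.2} using Theorems \ref{thm:1.2}--\ref{thm:1.4}. The only difference is that you spell out the left $\add(Q)$-approximation property explicitly through the virtual-arrow identities $\mu_i=c_{\mu_i}^{-1}\delta_i\alpha_i$ and $\mu_j'=c_{\mu_j'}^{-1}\epsilon_j\eta_j$, a verification the paper leaves as an observation.
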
 

Following \cite{HSS}, we describe now $\Lambda^\Delta(\xi,\xi ')$ by a quiver $Q^\Delta(\xi,\xi ')$ and relations. In fact, 
$\Lambda^\Delta(\xi,\xi')$ is a virtual mutation except border relations. Indeed, the quiver $Q^\Delta(\xi,\xi ')$ has the same 
vertices as $Q^\Delta$, and it is obtained from $Q^\Delta$ by applying the following operations: 
\begin{itemize}
\item removing the (virtual) arrows $\xi_i,\mu_i,\xi_j',\mu_j'$, for all $i\in\{1,\dots,s\}$ and $j\in\{1,\dots,t\}$, 
\item reversing the direction of the arrows $\alpha_i,\beta_i$ and $\eta_j,\theta_j$, for all $i\in\{1,\dots,s\}$ and 
$j\in\{1,\dots,t\}$, 
\item adding the arrows $\xymatrix{a_i\ar[r]^{\tau_i} & b_i}$ and $\xymatrix{y_{1j}\ar[r]^{\pi_j} & x_{1j}}$, for all 
$i\in\{1,\dots,s\}$ and $j\in\{1,\dots,t\}$.
\end{itemize} 
In this way, the quiver $Q^\Delta(\xi,\xi ')$ is obtained from $Q^\Delta$ by: 
\begin{enumerate}[(1)] 
\item replacing every subquiver $B^\Delta_i$ in $Q^\Delta$ by the block $B_i$ of type IV, for $i\in\{1,\dots,s\}$,
\item replacing every subquiver $B^\Delta_{s+j}$ in $Q^\Delta$ by the subquiver $B'_{s+j}$ of the form 
$$\xymatrix@C=0.5cm{ &x_{1j}\circ\ar[rd]_{\lambda_j}\ar[ld]_{\theta_j}\ar@/^{55pt}/[dd]^{\kappa_j} &&&\\ 
x_{2j}\bullet\ar[rd]_{\eta_j} & & \bullet y_{2j}\ar[ld]_{\epsilon_j} & &\circ z_j\ar@<-0.25cm>[lllu]_{\zeta_j}\\ 
&y_{1j}\circ \ar@<-0.25cm>[rrru]_{\psi_j}\ar[uu]_{\pi_j}^{*} &&& }$$ 
$j\in\{1,\dots,t\}$, being a gluing of block of type II with block of type IV. 
\end{enumerate} $\newline$ 
Therefore, we obtain a generalized triangulation quiver $(Q^\Delta(\xi,\xi '),*)$ by the above indicated marking of created blocks 
of type IV in the subquivers $B'_{s+j}$ -- in blocks $B_i$ we always mark triangle $(\beta_i \ \alpha_i \ \tau_i$). We abbreviate 
$Q'=(Q'_0,Q'_1,s,t):=Q^\Delta(\xi,\xi ')$. Note also that the associated quiver $(Q')^*$ is obtained from $Q'$ by removing vertices 
$c_i$, $x_{2j}$, the arrows $\alpha_i,\beta_i$ and the arrows $\eta_j,\theta_j$, for all $i\in\{1,\dots,s\}$ and $j\in\{1,\dots,t\}$. 

Let $f':(Q')^*_1\to (Q')^*_1$ and $g':(Q')^*_1\to (Q')^*_1$ be the permutations with $g'=\overline{f'}$ given by the generalized 
triangulation quiver $(Q',*)$. Moreover, denote by $\mathcal{O}(g')$ the set of all $g'$-orbits in $(Q')^*_1$. We define the weight 
function $m'_\bullet:\mathcal{O}(g')\to\mathbb{N}^*$, the parameter function $c'_\bullet:\mathcal{O}(g')\to K^*$, and the border 
function $b'_\bullet:\partial(Q',*)\to K$ as follows. For any $i\in\{1,\dots,s\}$ we set 
$$m'_{\mathcal{O}(\tau_i)}=m^\Delta_{\mathcal{O}^\Delta(\alpha_i)}\mbox{ and } c'_{\mathcal{O}(\tau_i)}=
c^\Delta_{\mathcal{O}^\Delta(\alpha_i)},$$ 
for any $j\in\{1,\dots,t\}$ 
$$m'_{\mathcal{O}(\pi_j)}=m^\Delta_{\mathcal{O}^\Delta(\eta_j)}\mbox{ and }c'_{\mathcal{O}(\pi_j)}=
c^\Delta_{\mathcal{O}^\Delta(\eta_j)},$$ 
and $m'_{\mathcal{O}}=m^\Delta_{\mathcal{O}}$ and $c'_{\mathcal{O}}=c^\Delta_{\mathcal{O}}$, for all remaining 
$g'$-orbits $\mathcal{O}$ in $(Q')^*_1$ (in case $\mathcal{O}=\mathcal{O}(\alpha)$ does not contain arrows of 
type $\pi_j$ or $\tau_i$, we have $\mathcal{O}(\alpha)=\mathcal{O}^\Delta(\alpha)$). Finally, since border loops 
are blocks of type I we have $\partial(Q',*)=\partial(Q^\Delta,f^\Delta)$, and the border function is induced from 
$b^\Delta_\bullet$, that is $b'_{x}=b^\Delta_x$, for any border vertex $x\in\partial(Q',*)$. 

Let now $\Lambda'=\Lambda(Q',*,m'_\bullet,c'_\bullet,b'_\bullet)$ be the associated weighted generalized triangulation 
algebra. By the construction, we have $\Lambda '=\Lambda$ if $Q=glue(B_1,\dots,B_r;\Theta)$ with all the blocks 
$B_1,\dots,B_r$ of types I-IV. 

\begin{theorem}\label{thm:4.6} The algebras $\Lambda '$ and $\Lambda^\Delta(\xi,\xi ')$ are isomorphic. \end{theorem}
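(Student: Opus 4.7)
The plan is to construct an explicit algebra homomorphism $\Phi\colon KQ' \to \Lambda^\Delta(\xi,\xi')$, to show that $\Phi$ annihilates the ideal $I' = I(Q',*,m'_\bullet,c'_\bullet,b'_\bullet)$ from Definition \ref{df:3.1}, and finally to conclude by a dimension count (using Theorem \ref{thm:3.7} on one side and Proposition \ref{prop:4.3} on the other) that the induced map $\bar\Phi\colon \Lambda' \to \Lambda^\Delta(\xi,\xi')$ is an isomorphism.

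For the construction of $\Phi$, I would send each idempotent $e_x$ of $KQ'$ to the projection of $T^{(\xi,\xi')}$ onto its summand $T_x^{(\xi,\xi')}$, and define $\Phi$ on arrows as follows. Every arrow of $Q'$ that lies outside the pivot regions, i.e. does not touch the vertices $c_i$ or $x_{2j}$, is sent to the chain map of stalk complexes given by left multiplication by the same arrow viewed in $\Lambda^\Delta$. The reversed arrows $\alpha_i,\beta_i$ out of $c_i$ (respectively $\eta_j,\theta_j$ out of $x_{2j}$) are represented by the canonical chain maps from $T_{c_i}^{(\xi,\xi')}$ (respectively $T_{x_{2j}}^{(\xi,\xi')}$) to $T_{a_i}^{(\xi,\xi')},T_{b_i}^{(\xi,\xi')}$ (respectively $T_{y_{1j}}^{(\xi,\xi')},T_{y_{2j}}^{(\xi,\xi')}$) built from the differential of the length-two complex together with left multiplication by $\beta_i$ (respectively $\theta_j$) in $\Lambda^\Delta$. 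Finally, the new arrow $\tau_i\colon a_i\to b_i$ is represented by the chain map induced by $\nu_i\delta_i\in\Lambda^\Delta$, which factors through the removed virtual arrow $\xi_i$, and $\pi_j\colon y_{1j}\to x_{1j}$ analogously through $\xi'_j$.

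Having identified $\Phi$, I would next verify that each defining relation of $\Lambda'$ from Definition \ref{df:3.1} holds in $\Lambda^\Delta(\xi,\xi')$, by rewriting it in terms of the corresponding compositions of chain maps and reducing to the relations of $\Lambda^\Delta$ in Definition \ref{df:4.1}. This uses in an essential way that $m'_\bullet, c'_\bullet, b'_\bullet$ coincide with $m^\Delta_\bullet, c^\Delta_\bullet, b^\Delta_\bullet$ off the newly created $g'$-orbits $\mathcal{O}(\tau_i),\mathcal{O}(\pi_j)$, whose values are transported from $\mathcal{O}^\Delta(\alpha_i),\mathcal{O}^\Delta(\eta_j)$. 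In particular, the block~IV relations 3) and the block~V relations 4) of Definition \ref{df:3.1} arise from telescoping the $f^\Delta$-orbits through the removed virtual arrows $\xi_i$ and $\xi'_j$. Once $\Phi$ is shown to descend to $\bar\Phi\colon \Lambda'\to\Lambda^\Delta(\xi,\xi')$, bijectivity follows by comparing $\dim_K\Lambda'$ (Theorem \ref{thm:3.7}) with $\dim_K\Lambda^\Delta(\xi,\xi')=\sum_{x,y}\dim_K\Hom_{K^b(P_{\Lambda^\Delta})}(T_x^{(\xi,\xi')},T_y^{(\xi,\xi')})$, the latter being computable directly from Proposition \ref{prop:4.3}.

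The main obstacle is the case-by-case verification of the relations, which is lengthy but routine bookkeeping, complicated by the numerous exceptional clauses in Definition \ref{df:3.1} that condition on virtuality and on the value of $m_{\bar\alpha}n_{\bar\alpha}$, and by the need to track the border term $b_{s(\alpha)}B_\alpha$ in relation 1) through the mutation. When the border function vanishes and $(Q,*)$ has only blocks of types I-IV, the statement reduces essentially to the virtual mutation description of weighted triangulation algebras established in \cite{HSS}; the novel content lies in treating blocks of type V (which produce the internal blocks of type IV inside $B'_{s+j}$, together with the fresh arrow $\pi_j$) and in carrying the non-zero border data through the endomorphism algebra computation.
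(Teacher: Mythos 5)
Your overall plan (explicit homomorphism on the quiver, case-by-case verification of relations, dimension comparison) is the kind of argument that works here, but the central formula you base it on is wrong, and this is where the proof would break. Under your own convention (an ordinary arrow of $Q'$ goes to left multiplication by that arrow in $\Lambda^\Delta$, i.e.\ an arrow $y\to x$ corresponds to a chain map $T^{(\xi,\xi')}_x\to T^{(\xi,\xi')}_y$), the new arrow $\tau_i\colon a_i\to b_i$ must be sent to a morphism $T^{(\xi,\xi')}_{b_i}\to T^{(\xi,\xi')}_{a_i}$, i.e.\ left multiplication by an element of $e_{a_i}\Lambda^\Delta e_{b_i}$. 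The element $\nu_i\delta_i$ lies in $e_{b_i}\Lambda^\Delta e_{a_i}$, so your assignment is not even in the correct $\Hom$-space; and independently of direction conventions, $\nu_i\delta_i$ is the composite of the two surviving arrows $\delta_i$ and $\nu_i$ and hence factors through the stalk summand $T^{(\xi,\xi')}_{d_i}$ -- it represents the image of the length-two path $\nu_i\delta_i$ of $Q'$, not a new irreducible morphism. It also does not ``factor through the removed virtual arrow $\xi_i$'': in $\Lambda^\Delta$ the virtual arrows satisfy $\xi_i=\beta_i\nu_i$ and $\mu_i=\delta_i\alpha_i$, and $\nu_i\delta_i$ involves neither. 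The correct representative of $\tau_i$ is left multiplication by the path $\alpha_i\beta_i$ through the mutation vertex $c_i$ (which is irreducible in $\End_{K^b(P_{\Lambda^\Delta})}(T^{(\xi,\xi')})$ precisely because $P^\Delta_{c_i}$ is no longer a stalk summand), and similarly $\pi_j$ corresponds to $\eta_j\theta_j$ through $x_{2j}$; this is exactly why the paper transports $g$-orbits, weights and parameters by replacing $\tau_i$ with $\alpha_i\beta_i$ and sets $m'_{\mathcal{O}(\tau_i)}=m^\Delta_{\mathcal{O}^\Delta(\alpha_i)}$. With your assignment, the verification of the type 3) relations of Definition \ref{df:3.1} (for instance $\alpha_i\tau_i=0$, $\tau_i\nu_i-c_{\bar{\tau_i}}A_{\bar{\tau_i}}$) would fail, and the induced map could not be surjective, since the genuine arrow of the Gabriel quiver of $\Lambda^\Delta(\xi,\xi')$ corresponding to $\tau_i$ would not lie in its image.

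Two further points. First, $\dim_K\Lambda^\Delta(\xi,\xi')$ is not ``directly'' available from Proposition \ref{prop:4.3}: the $\Hom$-spaces involving the two-term complexes $T^{(\xi,\xi')}_{c_i}$ and $T^{(\xi,\xi')}_{x_{2j}}$ require solving the chain-map condition and factoring out homotopies, which is a nontrivial computation (it is essentially the content of \cite{HSS}). Second, your assessment of where the new difficulty lies is off for this statement: in $Q'$ the type V blocks of $Q$ have already been dissolved into gluings of blocks of types II and IV, and the paper disposes of the entire zero-border case (including these) by quoting the virtual-mutation isomorphism of \cite{HSS}; the only new verification in its proof of this theorem is the border relation of type 1), identified with a type (1) relation of $\Lambda^\Delta$ after substituting $\alpha_i\beta_i$ for $\tau_i$, followed by a dimension comparison via the socle-deformation argument. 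The recovery of the original type V data is carried out later (Proposition \ref{prop:4.7} through Theorem \ref{thm:4.9}), not here. So even after repairing the definition of the homomorphism, your route amounts to redoing the \cite{HSS} computation from scratch rather than isolating the genuinely new border contribution.
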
 

\begin{proof} We assume that the border $\partial(Q,*)$ of $(Q,*)$ is not empty, since otherwise $\Lambda '$ is isomorphic to 
$\Lambda^\Delta(\xi,\xi ')$, by definition (see \cite{HSS}). Denote by $\Lambda_0'$ the algebra $\Lambda '$ with border function 
$b'_\bullet=0$. It was proved \cite{HSS} that $\Lambda_0'$ is isomorphic to the virtual mutation $\Lambda^\Delta_0(\xi,\xi ')$ with 
respect to sequence $(\xi,\xi ')$ of virtual arrows, where $\Lambda^\Delta_0=
\Lambda(Q^\Delta,f^\Delta,m^\Delta_\bullet,c^\Delta_\bullet,0)$, that is, the algebra $\Lambda^\Delta_0$ is the weighted 
triangulaion algebra $\Lambda^\Delta_0=\Lambda(Q^\Delta,f^\Delta,m^\Delta_\bullet,c^\Delta_\bullet)$ (i.e. without border relations). 
In particular, it follows that the Gabriel quivers of $\Lambda '$ and $\Lambda^\Delta(\xi,\xi ')$ coincide. 

Further, all relations 2)-6) defining the weighted generalized triangulation algebra $\Lambda '$ hold also in $\Lambda_0'\cong 
\Lambda^\Delta_0(\xi,\xi ')$, and hence, all these relations are satisfied in $\Lambda^\Delta(\xi,\xi ')$. Note that there are no 
relations of type 4) in $\Lambda '$ since $Q'$ is a glueing of blocks of types I-IV. Eventually, we have to prove that all relations 
of type 1) defining $\Lambda '$ are satisfied in $\Lambda^\Delta(\xi,\xi ')$. So let 
$\rho=\alpha^2-c'_{\bar{\alpha}}A_{\bar{\alpha}}-b_{s(\alpha)}B_{\alpha}$, for a border loop $\alpha\in Q '$. 
Then $\alpha$ and $\bar{\alpha}$ belong to the same $g'$-orbit which is not equal to $(\pi_j\mbox{ }\kappa_j)$. Hence $\rho$, vieved 
as an element in $\Lambda^\Delta(\xi,\xi ')$, is a homomorphism $T_{s(\alpha)}^{(\xi,\xi ')}\to T_{s(\alpha)}^{(\xi,\xi ')}$ given by 
a homomorphism $P_{s(\alpha)}^{\Delta}\to P_{s(\alpha)}^\Delta$ in $\mod\Lambda^\Delta$ identified with the relation 
$\rho^\Delta=\alpha^2-c'_{\bar{\alpha}}A^\Delta_{\bar{\alpha}}-b'_{s(\alpha)}B^\Delta_{\alpha}$ in $\Lambda^\Delta$, where 
$A^\Delta_{\bar{\alpha}}$ and $B^\Delta_{\alpha}$ are the paths in $Q^\Delta$ obtained from the paths $A_{\bar{\alpha}}$ and 
$B_\alpha$ (in $Q'$) by replacing each arrow $\tau_i$ by the path $\alpha_i\beta_i$, for all $i\in\{1,\dots,s\}$. According to 
the above definition of weight and parameter functions $m'_\bullet$ and $c'_\bullet$ we obtain that $\rho^\Delta =0$ in 
$\Lambda^\Delta$, since $\rho^\Delta$ is then a relation of type 1) in the weighted triangulation algebra $\Lambda^\Delta$ 
(see Definition \ref{df:4.1}), and consequently, also $\rho=0$ in $\Lambda^\Delta(\xi,\xi ')$. 

Summing up, we have proved that all relations defining $\Lambda '$ hold also in $\Lambda^\Delta(\xi,\xi ')$, so 
$\Lambda^\Delta(\xi,\xi ')$ is a quotient of $\Lambda '$. Finally, because $\Lambda '$ is a socle deformation of $\Lambda_0'$ and 
$\Lambda^\Delta$ is a socle deformation of $\Lambda^\Delta_0$, we infer that algebras $\Lambda '$ and $\Lambda^\Delta(\xi,\xi ')$ 
have the same dimension, and the proof is now finished. \end{proof} 

Assume now that $Q=glue(B_1,\dots,B_r;\Theta)$ with one of the blocks $B_k$ of type V, equivalently, $t\geqslant 1$. We consider 
the quiver $Q''$ obtained from the quiver $Q'$ by removing the arrows $\pi_j,\kappa_j$, for all $j\in\{1,\dots,t\}$. Let also 
$\Lambda ''$ denote the algebra given by the quiver $Q''$ and relations: 
$$\lambda_j\epsilon_j\psi_j=\theta_j\eta_j\psi_j+c'_{\lambda_j}A_{\lambda_j},\zeta_j\lambda_j\epsilon_j=
\zeta_j\theta_j\eta_j+c'_{\bar{\zeta_j}}A_{\bar{\zeta_j}},\epsilon_j\psi_j\zeta_j=c'_{\epsilon_j}A_{\epsilon_j},$$  
$$\psi_j\zeta_j\lambda_j=c'_{\psi_j}A_{\psi_j},\psi_j\zeta_j\theta_j=0,\eta_j\psi_j\zeta_j=0,
\lambda_j\epsilon_j\psi_j\zeta_j\lambda_j=0,\leqno{(R)}$$
$$\epsilon_j\psi_j\zeta_j\lambda_j\epsilon_j=0,\psi_j\zeta_j\lambda_j\epsilon_j\psi_j=0,
\zeta_j\lambda_j\epsilon_j\psi_j\zeta_j=0,A_{\lambda_j}\bar{\zeta_j}=0$$
$$\mbox{ and }\psi_j g'(\psi_j)f'(g'(\psi_j))=0,\mbox{if }g'(\psi_j)\notin\{\nu_1,\dots,\nu_s\},$$ 
for $j\in\{1,\dots,t\}$, and all relations of types 1)-3),5) and 6) from definition of a weighted generalized triangulation 
algebra $Q'=\Lambda(Q',*,m'_\bullet,c'_\bullet,b'_\bullet)$, with respect only to blocks in $Q'$ corresponding to original blocks 
$B_1,\dots,B_s$ of type IV from $Q$ (note that all blocks $B_1,\dots,B_s$ of type IV in $Q$ remain unchanged in $Q''$). Now, we have 
the following result. 

\begin{prop}\label{prop:4.7} The algebras $\Lambda '$ and $\Lambda ''$ are isomorphic. \end{prop}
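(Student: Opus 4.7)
\emph{Proof proposal.} The goal is to produce an algebra isomorphism between $\Lambda'=\Lambda(Q',*,m'_\bullet,c'_\bullet,b'_\bullet)$ and $\Lambda''$. The natural candidate is the $K$-algebra homomorphism
$$\Phi:\Lambda''\longrightarrow \Lambda'$$
sending each vertex idempotent and each arrow of $Q''$ (which by construction is also an arrow of $Q'$) to itself in $\Lambda'$. The plan is: \emph{(a)} check that $\Phi$ is well-defined, i.e.\ every relation listed in the definition of $\Lambda''$ already holds in $\Lambda'$; \emph{(b)} show that $\Phi$ is surjective by expressing the missing arrows $\pi_j,\kappa_j$ of $Q'\setminus Q''$ as explicit elements of the subalgebra generated by the arrows of $Q''$; \emph{(c)} conclude via a dimension count based on Theorem \ref{thm:3.7}.

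For step (a), the verification splits into three families of relations. The relations of $\Lambda''$ coming from the original blocks $B_1,\dots,B_s$ of type IV in $Q$ (which are preserved in both $Q'$ and $Q''$) are literally the relations of types 1)--3) and 5)--6) from Definition \ref{df:3.1}, and they hold in $\Lambda'$ by definition. The nontrivial relations are those attached to each subquiver $B'_{s+j}$, namely
$\lambda_j\epsilon_j\psi_j=\theta_j\eta_j\psi_j+c'_{\lambda_j}A_{\lambda_j}$,
$\zeta_j\lambda_j\epsilon_j=\zeta_j\theta_j\eta_j+c'_{\bar\zeta_j}A_{\bar\zeta_j}$,
$\epsilon_j\psi_j\zeta_j=c'_{\epsilon_j}A_{\epsilon_j}$,
$\psi_j\zeta_j\lambda_j=c'_{\psi_j}A_{\psi_j}$, and the various zero relations. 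These follow by multiplying the type 3) relations for the type IV block living inside $B'_{s+j}$ (where the identification $a_i\leftrightarrow y_{1j}$, $b_i\leftrightarrow x_{1j}$, $c_i\leftrightarrow y_{2j}$, $d_i\leftrightarrow x_{2j}$, $\tau_i\leftrightarrow \pi_j$, $\alpha_i\leftrightarrow \epsilon_j$, $\beta_i\leftrightarrow \lambda_j$, $\delta_i\leftrightarrow \eta_j$, $\nu_i\leftrightarrow \theta_j$ holds) and the type 2) relations of the type II block $\{x_{1j},y_{1j},z_j\}$ by appropriate arrows, then using the definitions of $m'_\bullet$ and $c'_\bullet$ together with the relations 5), 6) of Definition \ref{df:3.1} to rewrite the right-hand sides along $g'$-cycles. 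The zero relations of $\Lambda''$ arise identically from relations 5), 6) and from $\alpha_i\tau_i=0$, $\tau_i\beta_i=0$ type identities, after substituting the expressions for $\pi_j,\kappa_j$ produced in step (b).

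For step (b), the WGT relations on the type II and type IV pieces of $B'_{s+j}$ yield identities that force $\pi_j$ and $\kappa_j$ to be equal, in $\Lambda'$, to explicit elements of the subalgebra generated by the arrows of $Q''$ (just as in Example \ref{ex:4.4} the virtual arrows $\xi',\mu'$ of $\Lambda^\Delta$ were equal to $\theta\lambda$ and $\epsilon\eta$ respectively). In particular the type 2) relation $\kappa_j f'(\kappa_j)=c'_{\bar\kappa_j}A_{\bar\kappa_j}$ around the type II triangle, and the type 3) relation whose first factor is the marked arrow $\pi_j$, allow us to isolate $\pi_j$ and $\kappa_j$ as polynomial expressions in $\lambda_j,\theta_j,\eta_j,\epsilon_j,\psi_j,\zeta_j$ and the remaining untouched arrows of $Q'$. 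This proves $\mathrm{Im}\,\Phi=\Lambda'$. Finally, for step (c), the dimension formula of Theorem \ref{thm:3.7} applied to $\Lambda'$ and a direct computation of $\dim_K\Lambda''$ from the bases constructed in Proposition \ref{prop:3.5} (adapted to remove the redundant arrows $\pi_j,\kappa_j$ while adding the corresponding linear relations) yield the same value, so $\Phi$ is an isomorphism.

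The main obstacle is step (a): one must check the list of relations in $\Lambda''$ \emph{exactly} as stated, including the subtle exceptional clauses (``except $g'(\psi_j)\in\{\nu_1,\dots,\nu_s\}$'', the conditions about virtual arrows with $m_\alpha n_\alpha=2,3$ etc.), and match the coefficients $c'_\bullet$ and the path lengths $m'_\bullet n'_\bullet$ produced on the right-hand sides with those coming from the definitions of $m'_\bullet,c'_\bullet$. This bookkeeping is lengthy because one has to follow the $g'$-orbit of each arrow through the newly introduced $\tau_i$ and $\pi_j$ arrows; fortunately, these are the only places where the orbits in $Q'$ differ from those in $Q^\Delta$, so the verification reduces to a finite case analysis local to each $B'_{s+j}$.
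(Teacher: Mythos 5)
Your overall strategy is the same as the paper's: the paper's proof simply observes that $\pi_j=\psi_j\zeta_j$ and $\kappa_j=\lambda_j\epsilon_j-\theta_j\eta_j$ hold in $\Lambda'$, so these arrows disappear from the Gabriel quiver, $Q_{\Lambda'}=Q''$, and the relations of $\Lambda''$ are exactly what the defining relations of $\Lambda'$ become after this substitution (your added dimension count via Proposition \ref{prop:3.5}/Theorem \ref{thm:3.7} is a reasonable way to justify injectivity, which the paper leaves as ``standard operations''). However, the two places where you make the bookkeeping concrete are wrong, and following them literally would derail steps (a) and (b). First, your identification of the type IV block inside $B'_{s+j}$ is swapped: since $(Q')^*$ is obtained from $Q'$ by deleting $x_{2j}$ and the arrows $\eta_j,\theta_j$, the \emph{marked} triangle is $(\pi_j\,\theta_j\,\eta_j)$, so the correct dictionary is $\tau_i\leftrightarrow\pi_j$, $\beta_i\leftrightarrow\theta_j$, $\alpha_i\leftrightarrow\eta_j$, $c_i\leftrightarrow x_{2j}$, $\nu_i\leftrightarrow\lambda_j$, $\delta_i\leftrightarrow\epsilon_j$, $d_i\leftrightarrow y_{2j}$ -- the opposite of yours. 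With your assignment the zero relations $\alpha_i\tau_i=0$ and $\tau_i\beta_i=0$ would translate into $\epsilon_j\psi_j\zeta_j=0$ and $\psi_j\zeta_j\lambda_j=0$, contradicting the relations $\epsilon_j\psi_j\zeta_j=c'_{\epsilon_j}A_{\epsilon_j}$ and $\psi_j\zeta_j\lambda_j=c'_{\psi_j}A_{\psi_j}$ that actually define $\Lambda''$; with the correct assignment they give $\eta_j\psi_j\zeta_j=0$ and $\psi_j\zeta_j\theta_j=0$, as listed.

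Second, the relations you invoke in step (b) do not isolate the redundant arrows: $\kappa_j f'(\kappa_j)=c'_{\bar{\kappa}_j}A_{\bar{\kappa}_j}$ only expresses the product $\kappa_j\psi_j$ (note $\bar{\kappa}_j=\lambda_j$), and the type 3) relation beginning with $\pi_j$ only expresses $\pi_j\lambda_j=c'_{\psi_j}A_{\psi_j}$. The point that makes the elimination work is that $(\kappa_j\,\pi_j)$ is a $g'$-orbit of length $2$ with $m'_{\mathcal{O}(\pi_j)}=1$ and $c'_{\mathcal{O}(\pi_j)}=1$ (it comes from the orbit $\mathcal{O}^\Delta(\theta_j)$), so both arrows are virtual and $A_{\pi_j}=\pi_j$, $A_{\kappa_j}=\kappa_j$. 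Hence the type 2) relation for $\psi_j$ reads $\psi_j\zeta_j=c'_{\bar{\psi}_j}A_{\bar{\psi}_j}=\pi_j$, and the type 3) relation $\lambda_j\epsilon_j-\theta_j\eta_j=c'_{\bar{\lambda}_j}A_{\bar{\lambda}_j}$ reads $\lambda_j\epsilon_j-\theta_j\eta_j=\kappa_j$ (since $\bar{\lambda}_j=\kappa_j$). With these two corrections your plan coincides with the paper's argument: substituting these expressions for $\pi_j$ and $\kappa_j$ into the relations of $\Lambda'$ (using $\epsilon_j\pi_j=c'_{\epsilon_j}A_{\epsilon_j}$, $\pi_j\lambda_j=c'_{\psi_j}A_{\psi_j}$, $\zeta_j\kappa_j=c'_{\bar{\zeta}_j}A_{\bar{\zeta}_j}$, $\kappa_j\psi_j=c'_{\lambda_j}A_{\lambda_j}$, etc.) produces precisely the listed relations of $\Lambda''$.
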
 

\begin{proof} It is sufficient to observe that $\pi_j=\psi_j\zeta_j$ and $\kappa_j=\lambda_j\epsilon_j-\theta_j\eta_j$ 
in $\Lambda '$, so the arrows of $Q'$ corresponding to $\pi_j$ and $\kappa_j$ do not occur in the Gabriel quiver 
$Q_{\Lambda '}$ of $\Lambda '$, and hence, $Q_{\Lambda '}=Q''=Q_{\Lambda ''}$. The relations in $\Lambda ''$ are 
obtained from the relations in the weighted generalized triangulation algebra $\Lambda '$ by substituting the 
above formulas for $\pi_j$ and $\kappa_j$ and performing some standard operations. Therefore, the required 
isomorphism $\Lambda '\cong\Lambda ''$ holds. \end{proof} 

Note only that the last six relations defining $\Lambda ''$ above arise from relations of types 5) and 6) in 
the weighted generalized triangulation algebra $\Lambda'$. For example, the relation of type 5) for 
$\alpha=\kappa_j$ gives $\kappa_j\psi_jg'(\psi_j)=0$ in $\Lambda'$, so we obtain $A_{\lambda_j}\bar{\zeta_j}=0$ 
in $\Lambda '' $, since $\kappa_j\psi_j=c'_{\bar{\kappa_j}}A_{\bar{\kappa_j}}=c'_{\lambda_j}A_{\lambda_j}$ and 
$g'(\psi_j)=\overline{f'(\psi_j)}=\bar{\zeta_j}$. 

Finally, we will show that $\Lambda$ is derived equivalent to $\Lambda ''$. For each vertex $x\in Q''$ we denote by 
$P''_x=e_x\Lambda ''$ the associated indecomposable projective module in $\mod\Lambda ''$. Moreover, for any arrow 
$\eta\in Q''_1$ (or a combination of paths) we identify $\eta$ with the homomorphism $\eta:P''_{t(\eta)}\to P''_{s(\eta)}$, 
defined as the left multiplication by $\eta$. 

We consider the following complex $T=\bigoplus_{x\in Q''_0} T_x$ in $K^b(P_{\Lambda ''})$, where for any $x\in Q_0''$ 
different from $x_{1j}$, for $j\in\{1,\dots,t\}$, $T_x$ is the complex  
$$T_x:\;\xymatrix{0\ar[r] & P''_{x}\ar[r] & 0}$$ 
concentrated in degree $0$, whereas for any $j\in\{1,\dots,t\}$, $T_{x_{1j}}$ is of the form 
$$T_{x_{1j}}:\;\xymatrix{0\ar[r] & P''_{x_{1j}}\ar[r]^{\zeta_j} & P''_{z_j}\ar[r] & 0}$$ 
(concentrated in degrees $1$ and $0$). Then we have the following lemma. 

\begin{lem}\label{lem:4.8} $T$ is a tilting complex in $K^b(P_{\Lambda ''})$. \end{lem}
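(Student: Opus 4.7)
The plan is to apply Proposition \ref{pro:1.5} (Okuyama--Rickard) to the algebra $\Lambda''$ with the distinguished decomposition of its regular module picked out by the vertices $x_{1j}$. First I would record that Proposition \ref{pro:1.5} is applicable: $\Lambda''$ is basic and connected by construction, and it is symmetric via the chain $\Lambda''\cong\Lambda'\cong\Lambda^\Delta(\xi,\xi')$ supplied by Proposition \ref{prop:4.7} and Theorem \ref{thm:4.6}, together with Theorem \ref{thm:4.5}(2). Since that chain does not invoke Lemma \ref{lem:4.8} itself, there is no circularity.

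Set
$$P:=\bigoplus_{j=1}^t P''_{x_{1j}},\qquad Q:=\bigoplus_{x\in Q''_0\setminus\{x_{11},\dots,x_{1t}\}}P''_x,\qquad Q':=\bigoplus_{j=1}^t P''_{z_j},$$
and $f:=\bigoplus_{j=1}^t\zeta_j:P\to Q'$. Since $t\geqslant 1$ and $|Q''_0|\geqslant 4$, the decomposition $\Lambda''=P\oplus Q$ is proper. Because $z_j$ is a white outlet of the original block $B_{s+j}$ of type V while each $x_{1k}$ is black in its block and hence not glued to any outlet, we have $z_j\neq x_{1k}$ for all $j,k$, so $Q'\in\add(Q)$. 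With these choices the complex $T$ of the lemma is precisely $T_1\oplus T_2$ in the notation of Proposition \ref{pro:1.5}, with $T_1=Q$ concentrated in degree $0$ and $T_2=(P\xrightarrow{f}Q')$ concentrated in degrees $1$ and $0$.

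The heart of the argument is to verify that $f$ is a left $\add(Q)$-approximation, i.e.\ that for every $y\in Q''_0\setminus\{x_{11},\dots,x_{1t}\}$ and every $j$, composition with $\zeta_j$ induces a surjection $\Hom_{\Lambda''}(P''_{z_j},P''_y)\to\Hom_{\Lambda''}(P''_{x_{1j}},P''_y)$. The key combinatorial observation is that $\zeta_j$ is the \emph{unique} arrow of $Q''$ with target $x_{1j}$. Indeed, within the subquiver $B'_{s+j}$ the incoming arrows at $x_{1j}$ are only $\zeta_j$ and $\pi_j$ (the arrows $\theta_j,\lambda_j,\kappa_j$ all start at $x_{1j}$), and passing from $Q'$ to $Q''$ removes $\pi_j$. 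Moreover, since $x_{1j}$ is a black, non-outlet vertex of the original block $B_{s+j}$, it is not glued with any vertex of another block, so no further arrows are attached to $x_{1j}$ in $Q''$. Consequently every path in $Q''$ of positive length terminating at $x_{1j}$ has $\zeta_j$ as its final arrow, and therefore factors as $p\cdot\zeta_j$ for a path $p$ ending at $z_j$. Since $y\neq x_{1j}$, every element of $e_y\Lambda''e_{x_{1j}}\cong\Hom_{\Lambda''}(P''_{x_{1j}},P''_y)$ is a combination of such paths and hence lies in the image of right-multiplication by $\zeta_j$; this is the required surjectivity.

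With the approximation property established, Proposition \ref{pro:1.5} immediately yields that $T$ is a tilting complex in $K^b(P_{\Lambda''})$. I expect the only delicate point to be the combinatorial claim that $\zeta_j$ is the unique incoming arrow at $x_{1j}$ in $Q''$; once this is pinned down by carefully tracking the block replacement and the subsequent removals of $\pi_j$ and $\kappa_j$, the Okuyama--Rickard machinery gives the result directly.
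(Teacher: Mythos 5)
Your argument is correct and is essentially the paper's own proof: the same decomposition $\Lambda''=P\oplus Q$ with $P=\bigoplus_j P''_{x_{1j}}$, $Q'=\bigoplus_j P''_{z_j}$, the observation that every path ending at $x_{1j}$ from another vertex factors through $\zeta_j$ (making $\zeta=\operatorname{diag}(\zeta_1,\dots,\zeta_t)$ a left $\add(Q)$-approximation), and then Proposition \ref{pro:1.5}. Your explicit check that $\Lambda''$ is symmetric (via Theorems \ref{thm:4.5}, \ref{thm:4.6} and Proposition \ref{prop:4.7}) and that $\zeta_j$ is the unique arrow into $x_{1j}$ in $Q''$ just makes explicit what the paper leaves implicit.
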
 

\begin{proof} It is enough to see that $\Lambda ''$ has a decomposition $\Lambda ''= P\oplus Q$, $P=\bigoplus_{j=1}^t P''_{x_{1j}}$ 
and $Q=\bigoplus_{x\in Q_0''\setminus\{x_{11},\dots,x_{1t}\}}P''_x$, and $T$ is of the form $T=T^1\oplus T^2$ with $T^1=0\to Q \to 0$ 
and $T^2=\xymatrix{0\ar[r] & P \ar[r]^{\zeta} & Q'\ar[r] & 0}$, where $Q'=\bigoplus_{j=1}^t P''_{z_j}$ and 
$\zeta=diag(\zeta_1,\dots,\zeta_t)$ is a minimal left $\add(Q)$-approximation of $P$ (this is a consequence of the fact that all 
nontrivial paths in $Q''$ ending at $x_{1j}$ admit factorization through $\zeta_j$). Then $T$ is a tilting complex, due to 
Proposition \ref{pro:1.5}. \end{proof} 

Set $\Gamma(\Lambda):=\End_{K^b(P_{\Lambda ''})}(T)$. In particular, it follows from Theorem \ref{thm:1.1} that $\Gamma(\Lambda)$ is 
derived equivalent to $\Lambda''$. 

\begin{theorem}\label{thm:4.9} We have an isomorphism $\Lambda\cong\Gamma(\Lambda)$. \end{theorem}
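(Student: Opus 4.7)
The plan is to compute $\Gamma(\Lambda) = \End_{K^b(P_{\Lambda''})}(T)$ explicitly and match it with $\Lambda$ via a presentation by the quiver $Q$ and the relations of Definition \ref{df:3.1}. Lemma \ref{lem:4.8} already shows $T$ is a tilting complex, so by Theorem \ref{thm:1.1} the algebras $\Gamma(\Lambda)$ and $\Lambda''$ are derived equivalent; what is needed is the stronger statement that $\Gamma(\Lambda)$ is actually isomorphic to $\Lambda$, so a direct calculation of $\End(T)$ is required.

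First I would compute the morphism spaces $\Hom_{K^b(P_{\Lambda''})}(T_x, T_y)$ for all pairs of summands. For $x, y \in Q_0'' = Q_0$ with neither of the form $x_{1j}$, both $T_x = P''_x$ and $T_y = P''_y$ are concentrated in degree $0$, so $\Hom_{K^b}(T_x, T_y) = e_y \Lambda'' e_x$, which is accessible through the presentation of $\Lambda''$ from Proposition \ref{prop:4.7}. When one or both of $x, y$ is some $x_{1j}$, the morphism space is computed as chain maps to or from the short complex $P''_{x_{1j}} \xrightarrow{\zeta_j} P''_{z_j}$ modulo homotopy; these computations are short since the complex has length $2$.

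Next I would exhibit generators of $\Gamma(\Lambda)$ in bijection with the arrows of $Q$. Arrows of $Q$ that also appear in $Q''$ (in particular, all arrows outside the type V blocks) lift directly to chain maps. For each $j \in \{1,\dots,t\}$, the arrows $\omega_j, \sigma_j, \rho_j, \gamma_j, \phi_j$ of $Q$ that encode the type V block must be constructed as specific chain maps: $\omega_j : T_{z_j} \to T_{x_{1j}}$ as the identity on $P''_{z_j}$ in degree $0$; $\sigma_j : T_{x_{1j}} \to T_{x_{2j}}$ as the chain map whose degree-$0$ component is left multiplication by $\eta_j \psi_j$, which is well-defined because $\eta_j \psi_j \zeta_j = 0$ in $\Lambda''$; $\rho_j : T_{x_{1j}} \to T_{y_{2j}}$ as the chain map whose degree-$0$ component is $\epsilon_j \psi_j$, up to a homotopy adjustment accounting for $\epsilon_j \psi_j \zeta_j = c'_{\epsilon_j} A_{\epsilon_j}$; and $\gamma_j, \phi_j$ as the chain maps with degree-$0$ components $\zeta_j \theta_j$ and $\zeta_j \lambda_j$ respectively. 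A basis of $\Gamma(\Lambda)$ would then be enumerated from these, and its cardinality compared with $\dim_K \Lambda$ from Theorem \ref{thm:3.7}.

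Finally I would verify that all defining relations of $\Lambda$ from Definition \ref{df:3.1} hold under these substitutions, which reduces to path algebra identities in $\Lambda''$. Relations involving only arrows of $Q''$ transfer from the presentation of $\Lambda''$. The mixed type V relations -- $\sigma_j \omega_j = \eta_j \psi_j$, $\gamma_j \sigma_j = \phi_j \rho_j$, $\omega_j \gamma_j = 0$, $\phi_j \epsilon_j - \gamma_j \eta_j = c_{\bar{\phi}_j} A_{\bar{\phi}_j}$, along with the length-$4$ monomial vanishings -- translate into identities in $\Lambda''$ that follow from the supplementary relations listed just before Proposition \ref{prop:4.7} (notably $\psi_j \zeta_j \lambda_j = c'_{\psi_j} A_{\psi_j}$, $\lambda_j \epsilon_j \psi_j = \theta_j \eta_j \psi_j + c'_{\lambda_j} A_{\lambda_j}$, and the various vanishing monomials), which were designed to encode exactly the inverse mutation at $x_{1j}$. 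The hard part, I expect, is the bookkeeping for these mixed relations: each involves several chain-map compositions modulo homotopy, and one must apply the correct $\Lambda''$ relation at the right moment; a secondary but routine issue is the treatment of the border function when $\partial(Q,*) \neq \emptyset$, which should proceed as in the proof of Theorem \ref{thm:4.6}. Once this verification succeeds, a dimension count promotes the resulting surjection $\Lambda \to \Gamma(\Lambda)$ to the desired isomorphism.
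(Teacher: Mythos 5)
Your plan coincides with the paper's own proof essentially step for step: the same chain maps representing the type V arrows (with the same correction for $\rho_j$, namely $\epsilon_j\psi_j-c'_{\epsilon_j}A'_{\epsilon_j}$, which is needed to get a chain map at all rather than being a homotopy adjustment), the same verification that the type 4) relations of Definition \ref{df:3.1} follow from the listed relations of $\Lambda''$, and the same conclusion by comparing dimensions to upgrade the surjection $\Lambda\to\Gamma(\Lambda)$ to an isomorphism (the paper obtains the dimension count from the socle cycles $B_\eta$ and the bases of Proposition \ref{prop:3.5} rather than a direct enumeration of Hom spaces). So the proposal is correct and takes essentially the same route as the paper.
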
 

\begin{proof} We recall that the modules $\hat{P}_x=\Hom_{K^b(P_{\Lambda ''})}(T,T_x)$, for $x\in Q_0''$, form a complete set of all 
indecomposable projective modules in $\mod\Gamma(\Lambda)$. For $j\in\{1,\dots,t\}$, we consider the following morphisms between 
indecomposable direct summands of $T$: 
$$\hat{\gamma_j}:T_{x_{2j}}\to T_{z_j},\mbox{ given by homomorphism }\zeta_j\theta_j:P''_{x_{2j}}\to P''_{z_j},$$ 
$$\hat{\phi_j}:T_{y_{2j}}\to T_{z_j},\mbox{ given by homomorphism }\zeta_j\lambda_j:P''_{y_{2j}}\to P''_{z_j},$$ 
$$\hat{\omega_j}:T_{z_{j}}\to T_{x_{1j}},\mbox{ given by the identity }id:P''_{z_{j}}\to P''_{z_j},$$ 
$$\hat{\sigma_j}:T_{x_{1j}}\to T_{x_{2j}},\mbox{ given by homomorphism: }\eta_j\psi_j:P''_{z_{j}}\to P''_{x_{2j}},$$ 
$$\hat{\rho_j}:T_{x_{1j}}\to T_{y_{2j}},\mbox{ given by homomorphism: }\epsilon_j\psi_j-c'_{\epsilon_j} A_{\epsilon_j}':
P''_{z_{j}}\to P''_{y_{2j}},$$ 
$$\hat{\eta_j}:T_{y_{1j}}\to T_{x_{2j}},\mbox{ given by homomorphism: }\eta_j:P''_{y_{1j}}\to P''_{x_{2j}},$$ 
$$\hat{\epsilon_j}:T_{y_{1j}}\to T_{y_{2j}},\mbox{ given by homomorphism: }\epsilon_j:P''_{y_{1j}}\to P''_{y_{2j}},$$
$$\hat{\psi_j}:T_{z_{j}}\to T_{y_{1j}},\mbox{ given by homomorphism: }\psi_j:P''_{z_{j}}\to P''_{y_{1j}},$$ 
and $\hat{\eta}:T_{t(\eta)}\to T_{s(\eta)}$, given by $\eta:P''_{t(\eta)}\to P''_{s(\eta)}$, for every arrow 
$\eta\in Q''_1$ different from $\zeta_j,\theta_j,\eta_j,\psi_j,\lambda_j,\epsilon_j$, $j\in\{1,\dots,t\}$. Any 
morphism of the above form $\hat{\alpha}:T_x\to T_y$ induces an irreducible homomorphism 
$\alpha=\Hom_{K^b(P_{\Lambda ''})}(T,\hat{\alpha}):\hat{P}_x\to\hat{P}_y$ between projective modules in 
$\mod\Gamma(\Lambda)$, which is corresponding to an arrow $\alpha: y\to x$ in the Gabriel quiver 
$Q_{\Gamma(\Lambda)}$. It is easy to see that these arrows exhaust all arrows in $Q_{\Gamma(\Lambda)}$. In 
particular, for any $j\in\{1,\dots,t\}$, $Q_{\Gamma(\Lambda)}$ has the following subquiver $B_{s+j}''$ 
$$\xymatrix@C=0.5cm{&y_{2j}\bullet\ar[rrd]^(0.33){\rho_j}\ar[rr]^{\epsilon_j}&&\bullet y_{1j}\ar@/^{40pt}/[ldd]^{\psi_j}& \\ 
&x_{2j}\bullet\ar[rr]_{\sigma_j}\ar[rru]^(0.2){\eta_j}&&\bullet x_{1j}\ar[ld]^{\omega_j}& \\ 
&&\circ z_j\ar[lu]^{\gamma_j}\ar@/^{40pt}/[luu]^{\phi_j}&& }$$ 
which coincides with the block $B''_{s+j}=B_{s+j}$ of type V in $Q$, and all remaining arrows in $Q_{\Gamma(\Lambda)}$ 
are induced by the arrows $\eta$ in blocks of types I-IV in $Q''$ (except blocks of type IV coming from $B'_{s+j}$). 
Consequently, the Gabriel quivers of $\Gamma(\Lambda)$ and $\Lambda$ coincide. In particular, it follows also that 
all relations defining the weighted generalized triangulation algebra $\Lambda$ except relations of type 4) are 
satisfied in $\Gamma(\Lambda)$. 

We will prove that all relations of type 4) from definition of $\Lambda$ are also satisfied in $\Gamma(\Lambda)$. Fix 
$j\in\{1,\dots,t\}$. Observe first that the element $\rho=\phi_j\epsilon_j-\gamma_j\eta_j-c_{\bar{\phi_j}}A_{\bar{\phi_j}}$ 
considered as the element of $\Gamma(\Lambda)$ corresponds to the map $\hat{\rho}:T_{y_{1j}}\to T_{z_j}$ given by homomorphism of 
the form 
$$\zeta_j\lambda_j\epsilon_j-\zeta_j\theta_j\eta_j-c'_{\bar{\zeta_j}}A_{\bar{\zeta_j}}:P''_{y_{1j}}\to P''_{z_j},$$ 
because the path $A_{\bar{\phi_j}}$ in $Q$ corresponds to the path $A_{\bar{\zeta_j}}$ in $Q''$ (obtained by replacing each subpath 
$\zeta_k\lambda_k$ by $\phi_k$). Therefore $\hat{\rho}=0$, by the relations (R) defining $\Lambda ''$ (see page 29), and hence 
$\rho=0$ in $\Gamma(\Lambda)$. Similarly, the element $\rho=\epsilon_j\psi_j-\rho_j\omega_j-c_{\bar{\epsilon_j}}A_{\bar{\epsilon_j}}$ 
is zero in $\Gamma(\Lambda)$, since it is given by 
$\hat{\rho}=\epsilon_j\psi_j-(\epsilon_j\psi_j-c'_{\epsilon_j}A'_{\epsilon_j})-c_{\bar{\epsilon_j}}A_{\bar{\epsilon_j}}'=0$ 
in $\Lambda ''$. Further, if $\rho=\psi_j\phi_j-c_{\bar{\psi_j}}A_{\bar{\psi_j}}$, then $\rho=0$ in $\Gamma(\Lambda)$, 
because the corresponding element $\hat{\rho}=\psi_j\zeta_j\lambda_j-c'_{\psi_j}A_{\psi_j}$ is zero in $\Lambda ''$, 
due to the relations (R). Next, we observe that the element $\rho=\gamma_j\sigma_j-\phi_j\rho_j$ in $\Gamma(\Lambda)$ 
is given by 
$$\hat{\rho}=\zeta_j\theta_j\eta_j\psi_j-\zeta_j\lambda_j(\epsilon_j\psi_j-c'_{\epsilon_j} A_{\epsilon_j}')= 
-\zeta_j(\lambda_j\epsilon_j\psi_j-\theta_j\eta_j\psi_j)+c'_{\epsilon_j}\zeta_j\lambda_j A_{\epsilon_j}'= $$ 
$$-c'_{\lambda_j}\zeta_j A_{\lambda_j}+c'_{\epsilon_j}B_{\zeta_j}=-c'_{\lambda_j}B_{\zeta_j}+c'_{\epsilon_j}B_{\zeta_j}$$ 
in $\Lambda ''$, which is equal to zero, since $\lambda_j$ and $\epsilon_j$ belong to the same $g'$-orbit in $Q'_1$. This shows 
that $\rho=0$ in $\Gamma(\Lambda)$. It is clear from definition of arrows in $Q_{\Gamma(\Lambda)}$ that we have also the 
relation $\sigma_j\omega_j-\eta_j\psi_j$ in $\Gamma(\Lambda)$. Moreover, $\rho=\omega_j\gamma_j$ is corresponding to homomorphism 
$h:T_{x_{2j}}\to T_{x_{1j}}$ which is given by homomorphism $\zeta_j\theta_j:P''_{x_{2j}}\to P''_{z_j}$ factorizing through 
$\zeta_j$, so $h$ is homotopic to zero, and so $\rho =0$ in $\Gamma(\Lambda)$. The same arguments show that $\omega_j\phi_j=0$ 
in $\Gamma(\Lambda)$. Further, we have $\psi_j\zeta_j\theta_j=0$ in $\Lambda ''$, hence $\psi_j\gamma_j=0$ in $\Gamma(\Lambda)$, 
and the relation $\psi_j g(\psi_j) f(g(\psi_j))$ in $\Gamma(\Lambda)$ follows from analogous relation in $\Lambda ''$, because 
we have $g(\psi_j)=g'(\psi_j)=\bar{\zeta_j}$. Finally, observe that the remaining zero relations of length 4 follow from analogous 
relations in $\Lambda''$, whereas $B_{\phi_j}\bar{\phi_j}=0$ in $\Gamma(\Lambda)$, since this path corresponds to a path 
$B_{\zeta_j}\bar{\zeta}_j$ in $e_{z_j}\Lambda '' e_{t(\bar{\zeta}_j)}$, containing a subpath $A_{\lambda_j}\bar{\zeta}_j$, 
which is included in (R).   

As a result, we proved that the algebra $\Gamma(\Lambda)$ is of the form $\Gamma(\Lambda)=KQ/I$, where $I$ is an 
ideal of $KQ$, which contains the ideal $I(Q,*,m_\bullet,c_\bullet,b_\bullet)$ defining algebra $\Lambda$. In 
particular, socles of indecomposable projective $\Gamma(\Lambda)$-modules are given by cycles of the form $B_\eta$ 
(see also Corollary \ref{lem:3.4}), so the bases for indecomposable projective $\Lambda$-modules described in 
Proposition \ref{prop:3.5} give rise to bases of indecomposable projective $\Gamma(\Lambda)$-modules. Therefore, 
the dimensions of $\Lambda$ and $\Gamma(\Lambda)$ coincide, and hence, these algebras are indeed isomorphic. \end{proof}

\section{Generalized triangulation quivers from surfaces}\label{sec:5}

In this article, by a surface we mean a connected compact oriented real two-dimensional manifold, with or without boundary. It is 
well known that every surface $S$ admits an additional structure of a finite two-dimensional triangular cell complex, and hence a 
triangulation, by the deep Triangulation Theorem (see for example \cite[Section 2.3]{Ca}). 

For a positive natural number $n$, we denote by $D^n$ the unit disc in the $n$-dimensional Euclidean space $\mathbb{R}^n$, formed by 
all points of distance $\leqslant 1$ from the origin. The boundary $\partial D^n$ is then the unit sphere $S^{n-1}$ in $\mathbb{R}^n$ 
containing all points of distance exactly $1$ from the origin. Further, by an $n$-cell we mean a topological space homeomorphic to 
the open disc $int(D^n)=D^n\setminus\partial D^n$. In particular, $S^0=\partial D^1$ consists of two points, while $\partial D^0$ 
is empty, so $0$-cell is a singleton. We refer to \cite[Appendix]{Ha2} for some basic topological facts about cell complexes. 

Let $S$ be a surface. A finite family of maps $\phi_i^n:D^n_i\to S$, with $n\in\{0,1,2\}$ and $D^n_i=D^n$, is called a finite 
two-dimensional cell complex on $S$, provided the following conditions are satisfied. 
\begin{enumerate}[(1)] 
\item Each $\phi^n_i$ restricts to a homeomorphism $int(D^n_i)\to\phi^n_i(int(D^n_i))$ and the $n$-cells 
$e^n_i:=\phi^n_i(int(D^n_i))$ of $S$ are pairwise disjoint and their union is $S$. 

\item For any $2$-cell $e^2_i$ of $S$, $\phi^2_i(\partial D^2_i)$ is the union of $k$ $1$-cells and $m$ $0$-cells, where 
$k\in\{2,3\}$, $m\in\{1,2,3\}$, and different from $\xymatrix{\bullet\ar@{-}@<0.1cm>[r]&\bullet\ar@{-}@<0.1cm>[l]}$. 
\end{enumerate} 

Then the closures $\phi^2_i(D^2_i)$ of all $2$-cells $e^2_i$ are called \emph{triangles} of $S$, and the closures $\phi^1_i(D^1_i)$ 
of all $1$-cells $e^1_i$ are called \emph{edges} of $S$. The collection of all triangles is called a \emph{triangulation} of $S$. 
It follows from the assumption that such a triangulation $\mathcal{T}$ of $S$ has at least two edges and is different from the 
unpunctured digon $\xymatrix{\bullet\ar@{-}@<0.1cm>[r]&\bullet\ar@{-}@<0.1cm>[l]}$. Hence $\mathcal{T}$ is a finite collection of 
triangles of the form 
$$\begin{tikzpicture}
\draw[thick](-1,0)--(1,0);
\draw[thick](-1,0)--(0,1.67);
\draw[thick](1,0)--(0,1.67);
\node() at (-1,0){$\bullet$};
\node() at (1,0){$\bullet$};
\node() at (0,1.67){$\bullet$};
\node() at (-0.6,1){$a$};
\node() at (0.6,1){$a$};
\node() at (0,-0.25){$b$};

\draw[thick](-5,0)--(-3,0);
\draw[thick](-5,0)--(-4,1.67);
\draw[thick](-3,0)--(-4,1.67);
\node() at (-5,0){$\bullet$};
\node() at (-3,0){$\bullet$};
\node() at (-4,1.67){$\bullet$};
\node() at (-4.6,1){$a$};
\node() at (-3.4,1){$b$};
\node() at (-4,-0.25){$c$};

\draw[thick](3,1) circle (1);
\node() at (3,1){$\bullet$};
\node() at (4,1){$\bullet$};
\draw[thick](3,1)--(4,1);
\node() at (3.5,1.2){$a$};
\node() at (3,-0.25){$b$};

\node() at (1.5,1){$=$};
\node() at (-2,1){or};
\node() at (-4.5,-1){$a,b,c$ pairwise different};
\node() at (2,-1){$a,b$ different (self-folded triangle)};

\end{tikzpicture}$$ 
such that every edge is either the edge of exactly two triangles, is the self-folded edge, or lies on the boundary $\partial S$ of 
$S$. We note that a given surface $S$ admits many finite two-dimensional triangular cell complex structures, and so, triangulations. 

By a \emph{triangulated surface} we mean a pair $(S,\mathcal{T})$, where $S$ is a surface and $\mathcal{T}$ a triangulation of $S$. 
It turns out, that the blocks of types IV and V in a triangulation quiver can be reconstructed from appropriate marking of the 
self-folded triangles.  

\begin{df}\label{df:5.1} A \emph{marked triangulated surface} is a triple $(S,\mathcal{T},*)$ consisting of a triangulated surface 
$(S,\mathcal{T})$ together with marking (possibly empty) of a family of self-folded triangles in $\mathcal{T}$ by 
$$\begin{tikzpicture}
\draw[thick](0,0) circle (1);
\draw[thick](0,0)--(0,-1);
\node() at (0,0){$\bullet$};
\node() at (0,-1){$\bullet$};
\node() at (0.2,-0.5){$a$};
\node() at (1.2,0){$b$};
\node() at (0,0.5){$*$};
\end{tikzpicture}$$ 
such that the unfolded edge $b$ of any marked self-folded triangle is not a boundary edge. \end{df} 

Let $(S,\mathcal{T},*)$ be a marked triangulated surface. We assocaite to it the generalized triangulation quiver 
$(Q,*)=(Q(S,\mathcal{T}),*)$ defined as follows. The set of vertices $Q_0$ consists of edges of $\mathcal{T}$, while the set $Q_1$ 
of arrows is defined by the following rules:  
\begin{enumerate}[(1)] 
\item for any boundary edge $a$ of $\mathcal{T}$ we have the loop 
$$\xymatrix@R=0.01cm{&_{\circ}\ar@(lu,ld)[d]\\& a }$$ 

\item for any triangle $\Delta=(a\mbox{ }b\mbox{ }c)$ in $\mathcal{T}$ with pairwise different edges $a,b,c$, and oriented according 
to the orientation of $S$, we have the cycle of arrows 
$$\xymatrix{\circ_a\ar[rr]& &\circ_b\ar[ld]\\ &\circ_c\ar[lu]&}$$  

\item for any unmarked self-folded triangle 
$$\begin{tikzpicture}
\draw[thick](0,0) circle (1);
\draw[thick](0,0)--(0,-1);
\node() at (0,0){$\bullet$};
\node() at (0,-1){$\bullet$};
\node() at (0.2,-0.5){$d$};
\node() at (1.2,0){$c$};
\end{tikzpicture}$$ 
in $\mathcal{T}$, we have the quiver $\xymatrix@R=0.01cm{&\ar@(lu,ld)[d]&\\& \bullet_d \ar@<0.2cm>[r]  &\circ_c \ar@<0.1cm>[l] }$ 

\item for any marked self-folded triangle in $\mathcal{T}$ 
$$\begin{tikzpicture}[scale=1.2]
\draw[thick] (0,-1)--(0,0); 
\draw[thick](0,0) circle (1);
\node() at (0,0){$\bullet$};
\node() at (0,-1){$\bullet$};  
\node() at (-0.8,0){$c$};
\node() at (-0.2,0.5){$*$};
\node() at (-0.2,-0.5){$d$};
\draw[thick](0,0.5) circle (1.5);
\node() at (-1.7,0){$a$};
\node() at (1.7,0){$b$};
\node() at (0,2){$\bullet$};
\end{tikzpicture}$$ 
with $(a\mbox{ }b\mbox{ }c)$ a triangle in $\mathcal{T}$, we have the following marked quiver
$$\xymatrix{ & \bullet \mbox{ }c_i  \ar[ld]& \\ 
a_i\mbox{ }\circ\ar[rr]^{*} & & \circ\mbox{ } b_i \ar[lu]\ar[ld] \\ 
 & \bullet\mbox{ }d_i\ar[lu]& }$$
\item for any pair of marked self-folded triangles in $\mathcal{T}$ 
$$\begin{tikzpicture} 
\draw[thick](0,2) ellipse (2.5 and 2); 
\node() at (0,0){$\bullet$}; 

\draw[thick](0,0)--(1.5,2);
\node() at (1.5,2){$\bullet$}; 
\node() at (1.8,2.3){$*$}; 
\draw[thick](0,0)--(-1.5,2);
\node() at (-1.5,2){$\bullet$};
\node() at (-1.8,2.3){$*$}; 
 
\draw[thick](0,0)--(1.8,1.5); 
\draw[thick](1.8,1.5) arc (-55:150:0.7);
\draw[thick](0,0)--(0.81,2.45);

\draw[thick](0,0)--(-1.81,1.52); 
\draw[thick](-0.81,2.45) arc (30:235:0.7);
\draw[thick](0,0)--(-0.81,2.45); 

\node() at (0.5,2.5){$x_2$}; 
\node() at (-0.5,2.5){$x_1$}; 
\node() at (-0.8,1.5){$y_1$}; 
\node() at (0.8,1.5){$y_2$};
\node() at (0,4.2){$z$}; 
\end{tikzpicture}$$ 
with $(x_1\mbox{ }z\mbox{ }x_2)$ a triangle in $\mathcal{T}$, we have the marked quiver 
$$\xymatrix@C=0.5cm{&y_{2}\bullet\ar[rrd]\ar[rr]&&\bullet y_{1}\ar@/^{40pt}/[ldd]& \\ 
&x_{2}\bullet\ar[rr]_{*}\ar[rru]&&\bullet x_{1}\ar[ld]& \\ 
&&\circ z\ar[lu]\ar@/^{40pt}/[luu]&& }$$
\item for the triple of marked self-folded triangles in $\mathcal{T}$ 
$$\begin{tikzpicture}[scale=1.5]
\draw[thick](-1,0)--(0,0.835)--(1,0);
\draw[thick](0,0.835)--(0,1.8); 

\draw[thick](0.1,0) arc (-170:80:0.8);
\draw[thick](0.1,0)--(0,0.835)--(1.05,0.93);

\draw[thick](-1.05,0.93) arc (100:350:0.8);
\draw[thick](-0.12,0)--(0,0.835)--(-1.05,0.93); 

\draw[thick](0.45,1) arc (-60:240:0.9);
\draw[thick](0.45,1)--(0,0.835)--(-0.45,1);

\node() at (0,0.835){$\bullet$};
\node() at (-1,0){$\bullet$}; 
\node() at (-1.2,-0.2){$*$};
\node() at (1,0){$\bullet$};
\node() at (1.2,-0.2){$*$};
\node() at (0,1.8){$\bullet$};
\node() at (0,2){$*$};

\node() at (0.1,1.3){$1$};
\node() at (0.8,0.5){$3$};
\node() at (-0.8,0.5){$5$};
\node() at (1.1,1.4){$2$};
\node() at (2,0){$4$};
\node() at (-2,0){$6$};

\end{tikzpicture}$$ 
with $(2\mbox{ }4\mbox{ }6)$ a triangle in $\mathcal{T}$, we have the following quiver 
$$\xymatrix{
&&\circ_1\ar[rd]\ar@/_25pt/[lldd]&&\\
&\circ_5\ar[ru]\ar[rd]&&\circ_4\ar[ll]\ar[rd]&\\
\circ_3\ar[ru]\ar@/_25pt/[rrrr]&&\circ_2\ar[ll]\ar[ru]&&\circ_6\ar[ll]\ar@/_25pt/[lluu]}$$
$\newline$
\end{enumerate} 

We note that the quivers occuring in (4), (5) and (6) are the quivers associated in \cite[Sections 4 and 13]{FST2} to the signed 
adjacency matrices of the presented above collections of triangles of bordered surfaces with marked points. Moreover, the quiver 
occuring in (6) is the triangulation quiver obtained by glueing of four triangles 
$$\xymatrix@C=0.33cm{&\circ_1\ar[rd]&&&\circ_4\ar[rd]&&&\circ_5\ar[rd]&&&\circ_3\ar[rd]& \\ 
\circ_5\ar[ru]&&\circ_4,\ar[ll]&\circ_2\ar[ru]&&\circ_6,\ar[ll]&\circ_3\ar[ru]&&\circ_2,\ar[ll]&\circ_1\ar[ru]&&\circ_6\ar[ll]}$$ 
This quiver is the triangulation quiver associated to the tetrahedral triangulation of the sphere $S^2$ 
$$\begin{tikzpicture}
\draw[thick] (-1.7,-0.85)--(1.7,-0.85)--(0,1.7)--(-1.7,-0.85)--(0,0)--(0,1.7); 
\draw[thick] (0,0)--(1.7,-0.85); 
\node() at (-1.7,-0.85){$\bullet$};
\node() at (1.7,-0.85){$\bullet$};
\node() at (0,1.7){$\bullet$};
\node() at (0,0){$\bullet$}; 
\node() at (0,-1.2){$1$};
\node() at (0.2,0.85){$2$};
\node() at (1.3,0.4){$3$};
\node() at (-0.9,-0.2){$4$};
\node() at (0.9,-0.2){$5$};
\node() at (-1.3,0.4){$6$}; 
\end{tikzpicture}$$ 
with coherent orientation $(1\mbox{ }4\mbox{ }5)$, $(2\mbox{ }4\mbox{ }6)$, $(2\mbox{ }3\mbox{ }5)$ and $(1\mbox{ }3\mbox{ }6)$. 

\begin{theorem}\label{thm:5.2} The class of generalized triangulation quivers with at least two vertices coincides with the class 
of quivers associated to marked triangulated surfaces.
\end{theorem}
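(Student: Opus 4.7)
The plan is to establish the two inclusions of classes separately, guided directly by the six local rules used to construct $Q(S,\mathcal{T})$ in the text above and by the five block types of Definition~\ref{def:2.1}.

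For the forward inclusion (every quiver associated to a marked triangulated surface is a generalized triangulation quiver), I would inspect the rules (1)--(6) one by one. Rule~(1) produces a block of type~I, rule~(2) a block of type~II, rule~(3) a block of type~III, rule~(4) a block of type~IV together with the required marking of its triangle, and rule~(5) a block of type~V with its marking. The only local piece that is not a single block is rule~(6): the quiver obtained there is the tetrahedral quiver, which I would recognise as the glueing of four blocks of type~II with empty marking. Assembling these local pieces along the edges of $\mathcal{T}$ corresponds exactly to glueing blocks along their outlets; the involution $\Theta$ is read off from the identifications of edges in~$\mathcal{T}$, and connectedness of $Q$ follows from connectedness of $S$.

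For the converse inclusion, given $(Q,*)$ with $Q=glue(B_1,\dots,B_r;\Theta)$, I would assign to each block $B_k$ an oriented local piece of triangulated surface: block~I a monogonal disc with one boundary edge, block~II a single triangle, block~III an unmarked self-folded triangle whose unfolded edge is the outlet, block~IV the configuration of rule~(4) consisting of a marked self-folded triangle glued to an adjacent ordinary triangle, and block~V the configuration of rule~(5) consisting of two marked self-folded triangles sharing a common adjacent triangle. Each piece is naturally oriented and its free edges correspond precisely to the outlets of the block. The glueing instruction $\Theta$ tells us which pairs of free edges to identify; performing the identifications yields a compact oriented two-dimensional cell complex $(S,\mathcal{T})$. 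The inherited marking of self-folded triangles coming from blocks of types~IV and~V satisfies the condition in Definition~\ref{df:5.1}, because the unfolded edge of every marked self-folded triangle in these pieces is an outlet, hence identified with another outlet by $\Theta$ and never left on the boundary.

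The main obstacle will be verifying, in the converse direction, that the glued object is genuinely a triangulated surface, i.e.\ a topological $2$-manifold satisfying conditions~(1) and~(2) of the cell complex definition. The critical check is that every outlet produces, after glueing, an edge incident to exactly two distinct triangles of $\mathcal{T}$, which follows because each outlet lies in exactly one triangle of its block and $\Theta$ pairs it with exactly one outlet in a different block; orientability is automatic since each local piece is already oriented and all glueings identify edges in an orientation-reversing fashion. A small amount of extra care is needed for the exceptional tetrahedral situation in rule~(6), in order to confirm that the generalized triangulation quiver obtained from four type-II blocks with empty marking can be equivalently recovered from the marked surface of rule~(6); this is the one place where the assignment of surfaces to quivers is genuinely non-unique and must be handled as a separate case, but it does not affect the equality of classes asserted by the theorem.
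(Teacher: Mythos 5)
Your proposal is correct in substance and follows essentially the route the paper intends: Theorem \ref{thm:5.2} is stated immediately after the rules (1)--(6), and the intended argument is exactly the block-by-block dictionary you describe, with the configuration of rule (6) recognised, as in the paper's own remark, as a glueing of four blocks of type II (the triangulation quiver of the tetrahedral triangulation of the sphere), so that it causes no problem in either direction. One local assignment in your converse direction needs fixing: a block of type I should not be realised by attaching a \emph{monogonal disc} along the edge corresponding to its outlet. A monogon is not an admissible $2$-cell here (condition (2) of the cell-complex definition requires $k\in\{2,3\}$), and capping that edge with any extra face would make it an interior edge lying in only one triangle of $\mathcal{T}$, so rule (1) would no longer produce the loop; the correct reading is that a type I block simply declares the edge glued to its outlet to be a \emph{boundary} edge, i.e.\ the corresponding free edge of the neighbouring piece is left unglued on $\partial S$. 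With that correction the remaining verifications go through as you indicate: every other outlet becomes an edge shared by exactly two triangles, orientations can always be made compatible because each edge identification may be chosen orientation-reversing, and the quotient of finitely many triangles (allowing the self-folded identifications) by pairwise edge identifications is automatically a compact surface, so the vertex links cause no difficulty.
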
 

\begin{exmp}\label{ex:5.3} Consider the following marked triangulated surface $(S,\mathcal{T},*)$ 
$$\begin{tikzpicture} 
\draw[thick](0,2) ellipse (2.5 and 2); 
\node() at (0,0){$\bullet$}; 

\draw[thick](0,0)--(1.5,2);
\node() at (1.5,2){$\bullet$}; 
\node() at (1.8,2.3){$*$}; 
\draw[thick](0,0)--(-1.5,2);
\node() at (-1.5,2){$\bullet$};
\node() at (-1.8,2.3){$*$}; 
 
\draw[thick](0,0)--(1.8,1.5); 
\draw[thick](1.8,1.5) arc (-55:150:0.7);
\draw[thick](0,0)--(0.81,2.45);

\draw[thick](0,0)--(-1.81,1.52); 
\draw[thick](-0.81,2.45) arc (30:235:0.7);
\draw[thick](0,0)--(-0.81,2.45); 

\node() at (0.5,2.5){$3$}; 
\node() at (-0.5,2.5){$5$}; 
\node() at (-0.8,1.5){$6$}; 
\node() at (0.8,1.5){$4$};
\node() at (0,4.2){$2$}; 
\node() at (0.2,-0.7){$1$};

\draw[thick](0,0)--(0,-1.5); 
\node() at (0,-1.5){$\bullet$};
 
\end{tikzpicture}$$ 
Then the associated generalized triangulation quiver $(Q(S,\mathcal{T}),*)$ is of the form 
$$\xymatrix@R=0.01cm{
&&&&\\
&&&3\bullet\ar[dddd]_{*}\ar[rdddd]&4\bullet\ar[ldddd]\ar[dddd]\\
&\ar@(lu,ld)[d]&&&\\
& 1\bullet\ar@<0.2cm>[r] & 2\circ \ar@<0.1cm>[l]\ar[ruu]\ar@/^{30pt}/[rruu]&&\\
&&&& \\
&&&5\bullet\ar[luu]&6\bullet\ar@/^{30pt}/[lluu]}$$
$\newline$
$\newline$ 
and this is the quiver considered in Example \ref{ex:3.2}. 

\end{exmp}

\end{document}